\newtheoremstyle{mio}%
{}{} 
{\itshape}{} 
{\bfseries}{.}{ } 
{#1 #2\thmnote{~\mdseries(#3)}} 
\theoremstyle{mio}
\newtheorem{teor}{Theorem}[section]
\newtheorem{cor}[teor]{Corollary}
\newtheorem{prop}[teor]{Proposition}
\newtheorem{lemma}[teor]{Lemma}
\newtheorem{defin}[teor]{Definition}
\newtheoremstyle{definition2}%
{}{} 
{}{} 
{\bfseries}{.}{ } 
{#1 #2\thmnote{\mdseries~ #3}} 
\theoremstyle{definition2}
\newtheorem{ex}[teor]{Example}
\newtheorem{oss}[teor]{Remark}
\theoremstyle{definition}
\newtheorem*{conj}{Conjecture}
\title{Free groups of ideals}
\author{Dario Spirito}
\address{Dipartimento di Scienze Matematiche, Fisiche e Informatiche, Universit\`a di Udine, Udine, Italy}
\email{dario.spirito@uniud.it}
\keywords{Invertible ideals; star operations; free groups; Pr\"ufer domains; locally finite domains}
\subjclass[2020]{13A15; 13A18; 13F05; 13F20; 13G05; 20K99}
\newcommand{\locpic}{\mathrm{LPic}}
\newcommand{\XX}{\mathbf{X}}
\newcommand{\unit}{\mathcal{U}}
\newcommand{\nz}{\bullet}
\newcommand{\insprinc}{\mathrm{Princ}}
\newcommand{\Int}{\mathrm{Int}}
\newcommand{\Inv}{\mathrm{Inv}}
\newcommand{\Div}{\mathrm{Div}}
\newcommand{\Pic}{\mathrm{Pic}}
\newcommand{\Jac}{\mathrm{Jac}}
\newcommand{\Hom}{\mathrm{Hom}}
\newcommand{\mm}{\mathfrak{m}}
\newcommand{\nn}{\mathfrak{n}}
\newcommand{\coker}{\mathrm{coker}}
\newcommand{\inverse}{\mathrm{inv}}
\newcommand{\njaff}{\mathcal{N}}
\newcommand{\Spechi}{\Spec_{\mathrm{hi}}}
\newcommand{\B}{\mathcal{B}}
\newcommand{\deriv}{\mathcal{D}}
\begin{document}
\begin{abstract}
We study the freeness of the group $\Inv(D)$ of invertible ideals of an integral domain $D$, and the freeness of some related groups of (fractional) ideals. We study the relation between $\Inv(D)$ and $\Inv(D_P)$, in particular in the locally finite case, and we analyze in more detail the case where $D$ is Noetherian (obtaining a characterization of when $\Inv(D)$ is free for one-dimensional analytically unramified Noetherian domains) and where $D$ is Pr\"ufer.
\end{abstract}

\maketitle

\section{Introduction}
Let $D$ be a Dedekind domain. The unique factorization of the ideals of $D$ into product of prime ideals can be expressed by saying that the group $\Inv(D)$ of its invertible ideals is the free abelian group on the set $\Max(D)$ of maximal ideals. Recently, it has been shown that the group $\Inv(D)$ is free also when $D$ is only \emph{locally} Dedekind (or, equivalently, when it is locally a discrete valuation ring), that is, when $D$ is a so-called \emph{almost Dedekind domain}. This has been proved first by considering radical factorization (and the class of SP-domains) \cite{HK-Olb-Re}, and then extending these results with the help of a derived set-like sequence \cite{SP-scattered,bounded-almded}.

In this paper, we start a more general study of condition under which the set $\Inv(D)$ of invertible ideals of $D$ is free; we also extend our reach to closely related groups, such as the group $\insprinc(D)$ of principal ideals, the group $\Inv^t(D)$ of $t$-invertible ideals and the group $\Div(D)$ of $v$-invertible ideals. In Section \ref{sect:locfin} and \ref{sect:preJaff}, we relate the group $\Inv(D)$ with the group of invertible ideals on the localizations of $D$; in particular, we show that if $D$ is a locally finite intersection of domains whose groups of invertible ideals are free then also $\Inv(D)$ is free (Proposition \ref{prop:oplus-tcomplete}), and that the same holds in the case of one-dimensional domains with scattered maximal space (Proposition \ref{prop:preJaff}).

In Section \ref{sect:noeth} we study the case of Noetherian domains: we show that the freeness of $\insprinc(D)$ is related to the group of units of $D$ and of its integral closure $\overline{D}$ (Proposition \ref{prop:ic-krull}) and in the one-dimensional analytically unramified case we characterize when $\Inv(D)$ is free and show that this is a rather strong condition in the non-integrally closed case (Theorem \ref{teor:noeth}). In Section \ref{sect:prufer} we study in more detail the case of Pr\"ufer domains, and we show that under some finiteness hypothesis it is possible to characterize when $\Inv(D)$ and $\Div(D)$ are free (Propositions \ref{prop:cutbranch-semiloc-inv} and \ref{prop:cutbranch-semiloc-div}). We also advance the conjecture that $\Inv(D)$ is free whenever $D$ is a strongly discrete Pr\"ufer domain.

In the final Section \ref{sect:algebras} we relate the group $\Inv(R)$ of invertible ideals of a $D$-algebra $R$ with the group $\Inv(D)$, and prove a sufficient condition for $\Inv(R)$ to be free, based on the properties of the extension $D\subseteq R$ (Theorem \ref{teor:ext-retract}). As a consequence, we show that the group $\Inv(\Int(D))$ (where $\Int(D)$ is the ring of integer-valued polynomials over $D$) is free for every Dedekind domain $D$.

\section{Preliminaries}
Throughout the paper, $D$ is an integral domain, and $K$ will denote its quotient field. We also suppose $D\neq K$, i.e., that $D$ is not a field. We use $\unit(D)$ to denote the group of units of $D$ and $\Jac(D)$ to denote the Jacobson radical of $D$.

\subsection{Fractional and invertible ideals}
A \emph{fractional ideal} of $D$ is a $D$-submodule $I$ of $K$ such that $dI\subseteq D$ for some $d\in K$, $d\neq 0$. We denote by $\insfracid(D)$ the set of fractional ideals of $D$. We shall use the term ``ideal'' to refer to fractional ideals; to refer to ideals in the usual sense (i.e., contained in $D$) we shall use the expression ``integral ideal'' or ``proper ideal''.

A fractional ideal $I$ is \emph{invertible} if there is a fractional ideal $J$ such that $IJ=D$; in this case, $J=(D:I):=\{x\in K\mid xI\subseteq J\}$. Every invertible ideal is finitely generated; moreover, $I$ is invertible if and only if $I$ is finitely generated and locally principal (i.e., $ID_M$ is principal for every $M\in\Max(D)$). In particular, if $D$ is local or semilocal (i.e., if $\Max(D)$ is finite) then every invertible ideal is principal.

The set $\Inv(D)$ of invertible ideals is a group under the product of ideals; $\Inv(D)$ contains as a subgroup the set $\insprinc(D)$ of nonzero principal ideals of $D$. The quotient $\Inv(D)/\insprinc(D)$ is called the \emph{Picard group} of $D$, and is denoted by $\Pic(D)$. The natural map $\unit(K)\longrightarrow\insprinc(D)$, $x\mapsto xD$, induces an exact sequence
\begin{equation*}
0\longrightarrow\unit(D)\longrightarrow\unit(K)\longrightarrow\insprinc(D)\longrightarrow 0.
\end{equation*}


\subsection{Star operations}
A \emph{star operation} is a map $\star:\insfracid(D)\longrightarrow\insfracid(D)$, $I\mapsto I^\star$, such that the following conditions hold for every $I,J\in\insfracid(D)$ and every $x\in K$:
\begin{itemize}
\item $I\subseteq I^\star$;
\item if $I\subseteq J$, then $I^\star\subseteq J^\star$;
\item $(I^\star)^\star=I^\star$;
\item $D^\star=D$;
\item $x\cdot I^\star=(xI)^\star$.
\end{itemize}
The ideal $I^\star$ is said to be the \emph{$\star$-closure} of $I$, and $I$ is said to be \emph{$\star$-closed} (or a \emph{$\star$-ideal}) if $I=I^\star$. A star operation $\star$ is \emph{of finite type} if $I^\star=\bigcup\{J^\star\mid J\subseteq I, J$ finitely generated$\}$.

A \emph{$\star$-maximal} ideal is an ideal that is maximal in the set of the $\star$-ideals contained in $D$; we denote their set with $\Max^\star(D)$. Every $\star$-maximal ideal is prime. If $\star$ is of finite type, then every proper $\star$-ideal is contained in a $\star$-maximal ideal.

The set of star operations has an order, where $\star_1\leq\star_2$ if and only if $I^{\star_1}\subseteq I^{\star_2}$ for every ideal $I$. Under this order, the smallest star operation is the identity (usually denoted by $d$), while the largest is the \emph{divisorial closure} $v$, where $I^v:=(D:(D:I))$; a $v$-closed ideal is called a \emph{divisorial ideal}. The \emph{$t$-operation} is the star operation defined by $I^t:=\{x\in K\mid x\in J^v$ for some finitely generated $J\subseteq I\}$, and is the largest star operation of finite type.

An ideal $I$ is \emph{$\star$-invertible} if there is a $J$ such that $(IJ)^\star=D$; every $\star$-invertible $\star$-ideal is divisorial, and thus is closed by every star operation. The set $\Inv^\star(D)$ of $\star$-closed $\star$-ideals is a group under the ``$\star$-product'' $I\times_\star J:=(IJ)^\star$. If $I_1,\ldots,I_n\in\Inv^\star(D)$, we have $I_1\times_\star\cdots\times_\star I_n=(I_1\cdots I_n)^\star$ \cite[Proposition 32.2(c)]{gilmer}. 

By definition, $\Inv(D)=\Inv^d(D)$; we set $\Div(D):=\Inv^v(D)$. If $\star_1\leq\star_2$, then every $\star_1$-invertible ideal is $\star_2$-invertible; hence, there is a containment $\Inv^{\star_1}(D)\subseteq\Inv^{\star_2}(D)$. In particular, $\Inv(D)$ is contained in $\Inv^\star(D)$ for every $\star$, while $\Div(D)$ contains every $\Inv^\star(D)$.

See \cite[\S 32]{gilmer}, \cite{jaffard_systeme} or \cite{zafrullah_tinvt} for general results about star operations and $\star$-invertibility.

\subsection{Jaffard families}
An \emph{overring} of $D$ is a ring contained between $D$ and $K$; we denote by $\Over(D)$ the set of overrings of $D$. A \emph{flat overring} is an overring that is flat as a $D$-module. We say that a family $\Theta$ of overrings is:
\begin{itemize}
\item \emph{complete} if $I=\bigcap\{IT\mid T\in\Theta\}$ for every ideal $I$;
\item \emph{independent} if, for every $T\neq S$ in $\Theta$, there are no nonzero prime ideals $P$ of $D$ such that $PT\neq T$ and $PS\neq S$; if the elements of $\Theta$ are flat over $D$, this is equivalent to asking that $TS=K$ for all $T\neq S$ in $\Theta$ (cfr. \cite[Section 6.2]{fontana_factoring} and \cite[Lemma 3.4 and Definition 3.5]{jaff-derived});
\item \emph{locally finite} if, for every $x\in D$, we have $xT=T$ for all but finitely many $T\in\Theta$.
\end{itemize}
A domain is \emph{locally finite} if the set $\Theta:=\{D_M\mid M\in\Max(D)\}$ is locally finite.

A \emph{Jaffard family} of $D$ is a family $\Theta$ of flat overrings that is complete, independent and locally finite, and such that $K\notin\Theta$ (see \cite[Section 6.3]{fontana_factoring} and \cite{starloc}). An overring $T$ of $D$ is a \emph{Jaffard overring} if it belongs to a Jaffard family of $D$ \cite[Definition 3.7]{jaff-derived}.

If $D$ is one-dimensional, the set $\Theta:=\{D_M\mid M\in\Max(D)\}$ is always complete and independent, and thus it is a Jaffard family if and only if $\Theta$ (or, equivalently, $D$) is locally finite.

If $\star$ is a star operation on $D$ and $T$ is a flat overring, then $\star$ is said to be \emph{extendable} to $T$ if the map
\begin{equation*}
\begin{aligned}
\star_T\colon \insfracid(T) &\longrightarrow \insfracid(T),\\
IT & \longmapsto I^\star T
\end{aligned}
\end{equation*}
is well-defined, that is, if $IT=JT$ implies $I^\star T=J^\star T$.. (Note that, since $T$ is flat, every fractional ideal is extended from $D$.) Every star operation of finite type is extendable to every flat overring. Moreover, if $T$ is a Jaffard overring then every star operation is extendable to $T$, and every star operation on $T$ is the extension of a star operation on $D$ \cite[Theorem 5.4]{starloc}; in particular, the $d$-, $t$- and $v$-operations on $D$ extend, respectively, to the $d$-, $t$- and $v$-operation on $T$ (use \cite[Theorem 5.6]{starloc}).

\subsection{Topologies}
Let $\Spec(D)$ be the prime spectrum of $D$. We denote by $V(I)$ and $D(I)$, respectively, the basic closed and the basic open subset of $\Spec(D)$ in the Zariski topology induced by the ideal $I$. The \emph{inverse topology} on $\Spec(D)$ (and on its subsets) is the topology generated (as a subbasis of open sets) by the $V(I)$, as $I$ ranges among the finitely generated ideals. We denote by $\Spec(D)^\inverse$ this topological space.

The \emph{Zariski topology} on $\Over(D)$ is the topology generated by the sets $\B(x):=\{T\in\Over(D)\mid x\in T\}$, as $x$ ranges in $K$, while the \emph{inverse topology} is generated by the complements of the $\B(x)$, i.e., by $\Over(D)\setminus\B(x)$ as $x$ ranges in $K$.

A point $x$ of a topological space $X$ is said to be \emph{isolated} if $\{x\}$ is an open set, and it is said to be a \emph{limit point} if it is not isolated. The set of all limit points of $X$ is the \emph{derived set} of $X$, and is denoted by $\deriv(X)$. More generally, for every ordinal number $\alpha$, the \emph{$\alpha$-th derived set} is
\begin{equation*}
\deriv^\alpha(X):=\begin{cases}
X & \text{if~}\alpha=0;\\
\deriv(\deriv^\beta(X)) & \text{if~}\alpha=\beta+1;\\
\bigcap_{\beta<\alpha}\deriv^\beta(X) & \text{if~}\alpha\text{~is a limit ordinal}.
\end{cases}
\end{equation*}
The space $X$ is said to be \emph{scattered} if $\deriv^\alpha(X)=\emptyset$ for some $\alpha$.

\subsection{Valuation and Pr\"ufer domains}
A \emph{valuation domain} is a domain whose ideals are linearly ordered; equivalently, it is a domain $V$ with quotient field $K$ such that there is a surjective map $v:K\setminus\{0\}\longrightarrow\Gamma$ (where $\Gamma$ is a linearly ordered group) such that $v(ab)=v(a)+v(b)$ and $v(a+b)\geq\min\{v(a),v(b)\}$ for every nonzero $a,b\in K$. The group $\Gamma$ is called the \emph{value group} of $V$; we denote it by $\Gamma(V)$. If $\Gamma(V)\simeq\insZ$, then $V$ is said to be a \emph{discrete valuation domain} (DVR).

A prime ideal $P$ of $V$ is \emph{branched} if there is a prime ideal $Q\subsetneq P$ such that there are no prime ideals properly contained between $Q$ and $P$; otherwise $P$ is said to be \emph{unbranched}. Equivalently, $P$ is unbranched if and only if it is the union of the prime ideals properly contained in $P$.

A \emph{Pr\"ufer domain} is an integral domain that is locally a valuation domain, or equivalently such that every finitely generated ideal is invertible. A Pr\"ufer domain $D$ is \emph{strongly discrete} if no prime ideal of $D$ is idempotent. See e.g. \cite{gilmer} or \cite{fontana_libro} for properties of valuation and Pr\"ufer domains.

\subsection{Free groups and homology}
Let $A$ be an abelian group. Then, $A$ is a \emph{free abelian group} if it is the direct sum of infinite cyclic groups; equivalently, $A$ is free if and only if it has a basis, i.e., if there is a subset $S\subset A$ such that every element of $A$ can be written uniquely as a product of elements of $A$. Equivalently, a free abelian group is a free $\insZ$-module. Throughout the paper, whenever we speak of ``free'' groups we always mean free \emph{abelian} groups. 

A free group is always torsionfree, and a subgroup of a free group is always free. A free group is \emph{projective}, i.e., every exact sequence
\begin{equation*}
0\longrightarrow B\longrightarrow C\longrightarrow A\longrightarrow 0
\end{equation*}
splits, and in particular $C\simeq B\oplus A$.

We shall sometimes use the so-called snake lemma: given a commutative diagram with exact rows
\begin{equation*}
\begin{tikzcd}
0\arrow[r] & H_1\arrow[r]\arrow[d,"f"] & G_1\arrow[r]\arrow[d,"g"] & L_1\arrow[r]\arrow[d,"h"] &  0\\
0\arrow[r] & H_2\arrow[r] & G_2\arrow[r] & L_2\arrow[r] & 0.
\end{tikzcd}
\end{equation*}
there is an exact sequence
\begin{equation*}
0\longrightarrow\ker(f)\longrightarrow\ker(g)\longrightarrow\ker(h)\longrightarrow\coker(f)\longrightarrow\coker(g) \longrightarrow\coker(h)\longrightarrow 0.
\end{equation*}

\section{Local finiteness}\label{sect:locfin}
One of the main tools of the paper is the possibility to extend invertible ideals If $D\subseteq R$ is an extension of integral domains, we have a natural map
\begin{equation*}
\begin{aligned}
\phi\colon\Inv(D)  & \longrightarrow \Inv(R),\\
I & \longmapsto IR.
\end{aligned}
\end{equation*}
Indeed, if $I$ is an invertible ideal, then $IJ=D$ for some fractional ideal $J$, and
\begin{equation*}
\phi(I)\phi(J)=IRJR=IJR=DR=R,
\end{equation*}
so that $\phi(I)$ is an invertible ideal of $R$. It is straightforward to see that $\phi$ is a group homomorphism, and that it restricts to a map $\insprinc(D)\longrightarrow\insprinc(R)$. In general, we cannot say much more about $\phi$, as the next examples show.
\begin{ex}
~\begin{enumerate}[(a)]
\item If $R=D[X]$ is the polynomial ring over $D$, then $\phi$ is injective but not surjective; for example, the principal ideal $fD[X]$ is not in the image of $\phi$ for every non-constant polynomial $f$.
\item If $R=D_P$ is the localization of $R$ at a prime $P$, then $\phi$ is not injective, as any principal ideal $xD$ with $x\in D\setminus P$ extends to the whole $R$. However, it is surjective: indeed, every invertible ideal of $D_P$ is principal (since $D_P$ is local) and thus it is an extension of a principal ideal of $D$.
\item If $D$ is Pr\"ufer and $R\in\Over(D)$, then $\phi$ is surjective: indeed, given $I\in\Inv(R)$, let $I_0$ be an ideal of $D$ generated by a finite generating set of $I$: then, $I_0$ is invertible (since it is a finitely generated ideal of a Pr\"ufer domain) and its extension is $I$ by construction.
\item In general, $\phi$ may not be surjective even if $R$ is a localization of $D$. For example, let $D$ be a local Krull domain of dimension $2$, and suppose that there is a non-unit $f\in D$ such that $R:=D[1/f]$ is not a unique factorization domain. Then, $R$ is a Dedekind domain and so $\Inv(R)$ contains non-principal ideals, but every invertible ideal of $D$ is principal (since $D$ is local). Hence there are invertible ideals of $R$ that are not extensions of invertible ideals of $D$.
\end{enumerate}
\end{ex}

More generally, if $\star$ is a star operation on $D$ and $T$ is a flat overring of $D$ such that $\star$ is extendable to $T$, then we have a map
\begin{equation*}
\begin{aligned}
\phi_\star\colon \Inv^\star(D) & \longrightarrow \Inv^{\star_T}(T),\\
I & \longmapsto IT,
\end{aligned}
\end{equation*}
that is a group homomorphism \cite[proof of Proposition 7.1]{starloc}. In particular, this map exists when $\star$ is of finite type (and thus for the $t$-operation), but not in general: for example, it cannot be always extended to the $v$-operation.
\begin{ex}
Let $D$ be a one-dimensional Pr\"ufer domain such that every maximal ideal of $D$ is principal except for one, say $Q$; suppose also that $Q$ is not the radical of any principal ideal and that $D_Q$ is not discrete, and that the Jacobson radical $J$ of $D$ is nonzero. Then, $J=\bigcap\{P\mid P\in\Max(D),P\neq Q\}$ is divisorial, since each prime ideal $P\neq Q$ is finitely generated and thus invertible (and so divisorial). Moreover, since $D$ is completely integrally closed, every ideal is $v$-invertible \cite[Theorem 3.4]{gilmer}, and in particular $J$ is $v$-invertible.

The extension $JD_Q$ is equal to $QD_Q$ since $J$ is radical; however, $QD_Q$ is not divisorial in $D_Q$ since $QD_Q$ is not principal \cite[\S 34, Exercise 12]{gilmer}. Hence, the extension map $\Div(D)\longrightarrow\Div(D_Q)$ is not well-defined.
\end{ex}

The map $\phi$ becomes better-behaved when dealing with Jaffard families.
\begin{prop}\label{prop:jaffard}
Let $D$ be an integral domain, $\star$ a star operation on $D$ and $\Theta$ be a Jaffard family on $D$. Then, there is a group isomorphism
\begin{equation*}
\begin{aligned}
\Phi\colon \Inv^\star(D) & \longrightarrow \bigoplus_{T\in\Theta}\Inv^{\star_T}(T),\\
I & \longmapsto (IT)_{T\in\Theta}.
\end{aligned}
\end{equation*}
In particular,
\begin{enumerate}[(a)]
\item $\displaystyle{\Inv(D)\simeq\bigoplus_{T\in\Theta}\Inv(T)}$;
\item $\displaystyle{\Inv^t(D)\simeq\bigoplus_{T\in\Theta}\Inv^t(T)}$;
\item $\displaystyle{\Div(D)\simeq\bigoplus_{T\in\Theta}\Div(T)}$.
\end{enumerate}
Thus $\Inv(D)$ (respectively, $\Inv^t(D)$, $\Div(D)$) is free if and only if $\Inv(T)$ (respectively, $\Inv^t(T)$, $\Div(T)$) is free for every $T\in\Theta$.
\end{prop}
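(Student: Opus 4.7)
The plan is to prove $\Phi$ is a group isomorphism by verifying, in turn, well-definedness, the homomorphism property, injectivity, and surjectivity; the freeness statement then follows from the standard fact that a direct sum of abelian groups is free if and only if each summand is free.

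For well-definedness, each $T\in\Theta$ is a Jaffard overring, so $\star$ is extendable to $\star_T$ and the preceding discussion supplies a group homomorphism $\phi_\star\colon\Inv^\star(D)\to\Inv^{\star_T}(T)$; the only remaining point is that $\Phi(I)$ lies in the direct sum, i.e.\ that $IT=T$ for cofinitely many $T$. Given $I\in\Inv^\star(D)$ with $(IJ)^\star=D$, I may scale $I$ and $J$ so that both are integral. Local finiteness says that every nonzero element of $D$ is a unit in $T$ for all but finitely many $T\in\Theta$, and combining this with $(IJ)^\star=D$ yields an element of $IT\cdot JT$ equal to $1$ outside a finite exceptional set, forcing $IT=T$ there. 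That $\Phi$ is a homomorphism then follows from
\begin{equation*}
\Phi(I\times_\star J)=\bigl((IJ)^\star T\bigr)_{T\in\Theta}=\bigl(((IT)(JT))^{\star_T}\bigr)_{T\in\Theta}=\Phi(I)\cdot\Phi(J).
\end{equation*}

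For injectivity, completeness of $\Theta$ yields $I=\bigcap_{T\in\Theta}IT$ for every fractional ideal $I$, so $\Phi(I)=\Phi(J)$ forces $I=J$. For surjectivity, given a family $(I_T)_{T\in\Theta}$ with $I_T\in\Inv^{\star_T}(T)$ and $I_T=T$ for almost all $T$, I set $I:=\bigcap_{T\in\Theta}I_T$. Independence of $\Theta$ implies $I_T S=S$ whenever $S\neq T$ (the primes supporting $I_T$ are invisible to the other members of $\Theta$), so intersecting over $\Theta$ recovers $IS=I_S$ for each $S$. Assembling the $\star_T$-inverses by the same intersection procedure produces a $\star$-inverse of $I$, so $I\in\Inv^\star(D)$ and $\Phi(I)=(I_T)_{T\in\Theta}$.

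The main obstacle lies in the surjectivity step: verifying $IT=I_T$ for every $T$ and that the patched element really is a $\star$-inverse of $I$ requires all three defining properties of a Jaffard family (completeness, independence, local finiteness) to cooperate in a ``Chinese remainder''-style reconstruction. Once this is in place, the freeness consequence is immediate, since subgroups of free abelian groups are free and a direct sum of free abelian groups is again free (by taking the union of bases).
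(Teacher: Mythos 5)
First, note that the paper does not actually prove this statement: its ``proof'' is a citation to \cite[Proposition 7.1]{starloc}, so any self-contained argument you give is necessarily a different route from the paper's. Your overall architecture (well-definedness into the direct sum via local finiteness, the homomorphism property via the definition of $\star_T$, injectivity via completeness, surjectivity via an intersection/patching construction) is exactly the standard proof of the cited result, and the first three steps are essentially correct. For well-definedness, after scaling $I$ and its $\star$-inverse $J$ to be integral, it suffices to pick a single nonzero $x\in IJ\subseteq D$ and use local finiteness: for the cofinitely many $T$ with $xT=T$ one gets $T=xT\subseteq (IT)(JT)\subseteq IT\subseteq T$, and one must then undo the scaling (again by local finiteness applied to the scalars) to conclude $IT=T$ rather than a principal multiple of $T$; your sketch covers this in substance. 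Injectivity from completeness and the homomorphism identity $((IJ)T)^{\star_T}=(IJ)^\star T$ are fine.

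There is, however, a genuine gap in the surjectivity step, and it sits precisely where you locate ``the main obstacle.'' The one concrete claim you make there --- that independence gives $I_TS=S$ for $S\neq T$ --- is false as written: $I_T$ is a nonzero $T$-module, so $I_TS$ is a module over the ring generated by $T$ and $S$, which is $TS=K$ by independence (plus flatness); hence $I_TS=K$, not $S$. What you actually need is a statement about contractions, namely that for an integral $\star_T$-ideal $A$ of $T$ one has $(A\cap D)T=A$ (this uses flatness of $T$, so that every ideal of $T$ is extended from its contraction) and $(A\cap D)S=S$ for all $S\neq T$, and then that the finite product (or intersection) of these contractions is a $\star$-invertible $\star$-ideal of $D$ mapping to the prescribed tuple. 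None of this is verified in your write-up; it is exactly the content of \cite[Proposition 7.1]{starloc} and the supporting lemmas on Jaffard families, and it requires completeness, independence, local finiteness \emph{and} flatness together (your reduction to integral $I_T$ and the verification that the patched inverse is a $\star$-inverse are comparatively routine once $(\bigcap_S I_S)T=I_T$ is in hand --- that last identity follows from completeness applied to $(II')^\star$, as you suggest). So the proposal is a correct outline with the decisive step asserted rather than proved; either carry out the contraction argument explicitly or, as the paper does, invoke \cite[Proposition 7.1]{starloc}.
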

\begin{proof}
The first part of the theorem is exactly \cite[Proposition 7.1]{starloc}, and the second part follows by taking $\star=d$, $\star=t$ and $\star=v$, respectively. The last part follows trivially.
\end{proof}

\begin{cor}
Let $D$ be a locally finite one-dimensional domain. Then, $\displaystyle{\Inv(D)\simeq\bigoplus\{\Inv(D_M)\mid M\in\Max(D)\}}$; in particular, $\Inv(D)$ is free if and only if $\Inv(D_M)$ is free for every maximal ideal $M$.
\end{cor}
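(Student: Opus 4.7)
The plan is to reduce the corollary directly to Proposition \ref{prop:jaffard} by exhibiting $\Theta:=\{D_M\mid M\in\Max(D)\}$ as a Jaffard family. The paper has already noted in the Preliminaries that, when $\dim D=1$, the family $\Theta$ is automatically complete and independent, and it consists of flat overrings (each $D_M$ is a localization, hence flat); moreover, none of the $D_M$ equals $K$ since $D$ is not a field. Therefore the only additional input required is local finiteness, which is exactly the standing hypothesis.

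Once $\Theta$ is recognized as a Jaffard family, I apply Proposition \ref{prop:jaffard}(a) with $\star=d$ to obtain
\begin{equation*}
\Inv(D)\simeq\bigoplus_{M\in\Max(D)}\Inv(D_M),
\end{equation*}
which is the first claim.

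For the freeness statement, I invoke the general fact (already recalled in the excerpt) that an abelian group expressed as a direct sum is free if and only if each summand is free: the ``if'' direction is immediate by concatenating bases, while for ``only if'' each $\Inv(D_M)$ embeds as a direct summand of $\Inv(D)$ (via the projection splitting), and subgroups of free abelian groups are free. Alternatively, this last equivalence is the ``Thus $\Inv(D)$ is free if and only if $\Inv(T)$ is free for every $T\in\Theta$'' part already proved in Proposition \ref{prop:jaffard}.

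There is no real obstacle here: the corollary is genuinely just the specialization of Proposition \ref{prop:jaffard} to the one-dimensional locally finite case. The only thing to verify carefully is that the Jaffard-family hypothesis actually holds, and this is exactly the content of the observation in the Preliminaries subsection on Jaffard families.
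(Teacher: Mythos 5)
Your proposal is correct and is essentially identical to the paper's own proof: both reduce the corollary to Proposition \ref{prop:jaffard} by observing that $\Theta=\{D_M\mid M\in\Max(D)\}$ is a Jaffard family in the one-dimensional locally finite case. The extra detail you supply (flatness, completeness, independence, and the splitting argument for the freeness equivalence) is all already recorded in the Preliminaries and in the statement of Proposition \ref{prop:jaffard}, so nothing further is needed.
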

\begin{proof}
It is enough to note that $\Theta:=\{D_M\mid M\in\Max(D)\}$ is a Jaffard family and apply Proposition \ref{prop:jaffard}.
\end{proof}

Usually, we do not have at our disposal a Jaffard family. However, if we are interested only in the freeness of $\Inv(D)$, we can work with much weaker hypothesis.
\begin{lemma}\label{lemma:extInv}
Let $D$ be an integral domain, and let $\Theta$ be a family of overrings of $D$ such that $D=\bigcap\{T\mid T\in\Theta\}$. Then, there is an injective group homomorphism
\begin{equation*}
\begin{aligned}
\Phi\colon \Inv(D)  & \longrightarrow \prod_{T\in\Theta}\Inv(T).\\
I & \longmapsto (IT)_{T\in\Theta}.
\end{aligned}
\end{equation*}
Moreover, if each $T\in\Theta$ is flat, then there is an injective group homomorphism
\begin{equation*}
\begin{aligned}
\Phi_t\colon \Inv^t(D)  & \longrightarrow \prod_{T\in\Theta}\Inv^t(T).\\
I & \longmapsto (IT)_{T\in\Theta}.
\end{aligned}
\end{equation*}
\end{lemma}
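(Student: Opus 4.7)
The plan is to verify, for each of the two maps, well-definedness, the homomorphism property, and injectivity. Well-definedness and multiplicativity for $\Phi$ are already built into the opening discussion of this section: each coordinate map $I\mapsto IT$ sends $\Inv(D)$ into $\Inv(T)$ and satisfies $(IJ)T=(IT)(JT)$. For $\Phi_t$ the flatness of each $T$ is exactly what guarantees that the finite-type operation $t$ extends to $T$, yielding the coordinate maps $\phi_t\colon\Inv^t(D)\to\Inv^t(T)$ recalled after the examples; multiplicativity then follows as above, with the $t$-product in place of the ordinary product.

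The content is in injectivity. The elementary key step is: if $IT=T$ for a single overring $T\in\Theta$, then $(D:I)\subseteq T$. Indeed, for $x\in(D:I)$ we have $xI\subseteq D\subseteq T$, so $xT=x(IT)=(xI)T\subseteq T$, and evaluating at $1\in T$ gives $x\in T$. Consequently, if $I\in\ker\Phi$ then $(D:I)\subseteq\bigcap_{T\in\Theta}T=D$.

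To finish the case of $\Phi$, I would observe that $I^{-1}=(D:I)$ also lies in $\ker\Phi$, since $I^{-1}T$ is the inverse of $IT=T$ in $\Inv(T)$ and hence equals $T$. Applying the previous step to $I^{-1}$ gives $I=(D:(D:I))=(D:I^{-1})\subseteq D$, where the first equality uses that an invertible ideal is divisorial; combined with $D=II^{-1}\subseteq I\cdot D=I$ (from $(D:I)\subseteq D$), this yields $I=D$. The argument for $\Phi_t$ is structurally identical: the $t$-inverse $J$ of $I$ also belongs to $\ker\Phi_t$, so both $(D:I)$ and $(D:J)$ are contained in $D$; since $t$-invertible $t$-ideals are divisorial, we get $I,J\supseteq D$, and then $IJ\subseteq(IJ)^t=D$ forces $J=DJ\subseteq IJ\subseteq D$, so $J=D$ and therefore $I=D$.

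The main obstacle is really just spotting the identity $xT=(xI)T\subseteq T$ that drives the injectivity argument; once that is in hand, everything else is formal, with the only substantive difference between the two cases being the use of $(IJ)^t=D$ in place of $IJ=D$ at the last step, and the appeal to the fact (already recorded in the preliminaries) that every $t$-invertible $t$-ideal is divisorial.
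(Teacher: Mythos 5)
Your proof is correct, but the injectivity argument takes a genuinely different route from the paper's. The paper observes that $\wedge_\Theta\colon J\mapsto\bigcap\{JT\mid T\in\Theta\}$ is a star operation on $D$ (this is where the hypothesis $D=\bigcap\{T\mid T\in\Theta\}$ enters), and that every element of $\Inv(D)$ or $\Inv^t(D)$ is divisorial and hence closed under every star operation, so that $I=\bigcap\{IT\mid T\in\Theta\}$; thus $I$ is literally recovered from $\Phi(I)$ and injectivity of both maps is immediate. You instead compute the kernel directly, via the observation that $IT=T$ forces $(D:I)\subseteq T$, and then use divisoriality of ($t$-)invertible ideals to conclude $I=D$; since $\Phi$ and $\Phi_t$ are group homomorphisms, triviality of the kernel suffices. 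Both arguments are complete. The paper's is shorter and gives the slightly stronger fact that $I=\bigcap_T IT$, i.e.\ that $\Phi$ is inverted by intersection on its image; yours is more elementary in that it never needs to verify the star-operation axioms for $\wedge_\Theta$, only a one-line colon-ideal computation. One small point you should make explicit in the $\Phi_t$ case: the coordinate map naturally lands in $\Inv^{t_T}(T)$, where $t_T$ is the extension of the $t$-operation of $D$ to $T$, and one needs the additional remark (as in the paper) that $\Inv^{t_T}(T)\subseteq\Inv^t(T)$ because $t_T$ is of finite type and the $t$-operation is the largest star operation of finite type on $T$.
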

\begin{proof}
The discussion in the previous part of the section shows that $\Phi$ is a well-defined group homomorphism, since each component $\Inv(D)\longrightarrow\Inv(T)$ is a homomorphism. Likewise, we have a homomorphism $\Inv^t(D)\longrightarrow\Inv^{t_T}(D)$ (where $t_T$ is the extension of the $t$-operation), and $\Inv^{t_T}(D)$ is contained in $\Inv^t(T)$ since $t_T$ is of finite type and the $t$-operation is the largest star operation of finite type.

To show that these maps are injective, we note that the map $\wedge_\Theta:J\mapsto\bigcap\{JT\mid T\in\Theta\}$ is a star operation on $D$; since every $t$-invertible ideal (and, in particular, every invertible ideal) is divisorial, it is also $\wedge_\Theta$-closed, and thus for every $I\in\Inv^t(D)$ we have $I=\bigcap\{IT\mid T\in\Theta\}$, from which the injectivity of $\Phi$ and $\Phi_t$ follows. The claim is proved.
\end{proof}

\begin{prop}\label{prop:oplus-tcomplete}
Let $D$ be an integral domain. Let $\Theta$ be a family of overrings of $D$ such that:
\begin{itemize}
\item $\Theta$ is locally finite;
\item $D=\bigcap\{T\mid T\in\Theta\}$
\item $\Inv(T)$ is free for every $T\in\Theta$.
\end{itemize}
Then, $\Inv(D)$ is free.
\end{prop}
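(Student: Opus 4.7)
The plan is to combine Lemma~\ref{lemma:extInv} with the standard fact, recalled in the preliminaries, that every subgroup of a free abelian group is free. The argument splits into three essentially independent steps, and boils down to showing that the embedding supplied by Lemma~\ref{lemma:extInv} actually lands in a direct sum of free groups.

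First, since $D=\bigcap\{T\mid T\in\Theta\}$, Lemma~\ref{lemma:extInv} provides an injective group homomorphism
\begin{equation*}
\Phi\colon\Inv(D)\longrightarrow\prod_{T\in\Theta}\Inv(T),\qquad I\longmapsto (IT)_{T\in\Theta}.
\end{equation*}
Second, I would promote the codomain from the product to the direct sum by exploiting local finiteness. Given $I\in\Inv(D)$, pick $0\neq d\in D$ with $dI\subseteq D$; since $dI$ is invertible, it is finitely generated, say $dI=(a_1,\dots,a_n)$ with $a_i\in D$. Applying local finiteness of $\Theta$ to the finite set $\{d,a_1,\dots,a_n\}$ yields a cofinite subset $\Theta_0\subseteq\Theta$ on which $dT=T$ and $a_iT=T$ for every $i$. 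For $T\in\Theta_0$ we then have $(dI)T\supseteq a_1T=T$, so $(dI)T=T$, and thus $IT=d^{-1}T=T$, the identity of $\Inv(T)$. Consequently $\Phi$ factors through the inclusion $\bigoplus_{T\in\Theta}\Inv(T)\hookrightarrow\prod_{T\in\Theta}\Inv(T)$, giving an injection $\Inv(D)\hookrightarrow\bigoplus_{T\in\Theta}\Inv(T)$.

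Third, a direct sum of free abelian groups is free, so by the hypothesis that each $\Inv(T)$ is free the group $\bigoplus_{T\in\Theta}\Inv(T)$ is free. Since $\Inv(D)$ embeds as a subgroup of a free abelian group, and since subgroups of free abelian groups are free, it follows that $\Inv(D)$ is itself free.

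The only step that is not purely formal is the second one, where one must verify that $IT=T$ for all but finitely many $T$. This is really the only place where the local finiteness hypothesis is used, and it relies crucially on the fact that every invertible ideal is finitely generated (so that finitely many applications of local finiteness suffice). After that the conclusion is immediate from Lemma~\ref{lemma:extInv} and the standard homological facts about free abelian groups.
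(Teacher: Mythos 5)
Your proposal is correct and follows the same route as the paper: embed $\Inv(D)$ into $\prod_{T\in\Theta}\Inv(T)$ via Lemma~\ref{lemma:extInv}, use local finiteness to see that the image lies in the direct sum, and conclude because a subgroup of a free abelian group is free. The only difference is that you spell out the verification that $IT=T$ for all but finitely many $T$ (clearing denominators and applying local finiteness to a finite generating set), which the paper leaves implicit.
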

\begin{proof}
Consider the map $\Phi$ of Lemma \ref{lemma:extInv}. Since $\Theta$ is locally finite, given an $I\in\Inv(D)$ we have $IT=T$ for all but finitely many $T\in\Theta$; thus the range of $\Phi$ is contained in the direct sum $\bigoplus\Inv(T)$, which is free since each $\Inv(T)$ is free. Hence $\Inv(D)\simeq\Phi(\Inv(D))$ is (isomorphic to) a subgroup of a free group, and thus it is itself free. The claim is proved.
\end{proof}

\begin{cor}\label{cor:locfin}
Let $D$ be an integral domain. If $D$ is locally finite and $\Inv(D_M)$ is free for every $M\in\Max(D)$, then $\Inv(D)$ is free.
\end{cor}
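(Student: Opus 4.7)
The plan is to apply Proposition \ref{prop:oplus-tcomplete} directly with the family $\Theta := \{D_M \mid M \in \Max(D)\}$. So essentially all the work has been done, and what remains is to verify that this $\Theta$ satisfies the three hypotheses of that proposition.

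First I would note that $\Theta$ is locally finite by assumption: by the definition of a locally finite domain given in the Preliminaries, to say $D$ is locally finite is precisely to say that the family $\{D_M \mid M \in \Max(D)\}$ is locally finite. Second, the equality $D = \bigcap_{M \in \Max(D)} D_M$ is a standard fact about any integral domain (an element $x$ of $K$ belongs to $D$ if and only if the conductor $(D:_D x)$ is not contained in any maximal ideal, i.e.\ equals $D$). Third, the hypothesis that $\Inv(D_M)$ is free for every $M \in \Max(D)$ is exactly what is assumed.

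With all three bullet points of Proposition \ref{prop:oplus-tcomplete} verified, the conclusion $\Inv(D)$ is free follows at once. There is no real obstacle here; the substantive content is already packaged into Proposition \ref{prop:oplus-tcomplete}, where the role of local finiteness (ensuring that the injective image of $\Inv(D)$ under extension lands inside the direct sum $\bigoplus_T \Inv(T)$ rather than only in the product) was the key point. In the present corollary, one simply specializes to the canonical choice of $\Theta$.
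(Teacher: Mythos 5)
Your proof is correct and follows exactly the paper's own argument: both apply Proposition \ref{prop:oplus-tcomplete} to $\Theta=\{D_M\mid M\in\Max(D)\}$, with the local finiteness of $\Theta$ being the definition of $D$ being locally finite and $D=\bigcap_M D_M$ being the standard fact. Your write-up is if anything slightly more explicit in checking the second hypothesis.
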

\begin{proof}
Saying that $D$ is locally finite is equivalent to saying that $\Theta:=\{D_M\mid M\in\Max(D)\}$ is locally finite. The claim now follows from Proposition \ref{prop:oplus-tcomplete}.
\end{proof}

We can also obtain a similar result for $t$-invertible ideals. Recall that a domain has the \emph{$t$-finite character} if the set $\{D_M\mid M\in\Max^t(D)\}$ is locally finite.
\begin{prop}\label{prop:oplus-tmax}
Let $D$ be a domain with the $t$-finite character. If $\Inv^t(D_M)$ is free for every $M\in\Max^t(D)$, then $\Inv^t(D)$ is free.
\end{prop}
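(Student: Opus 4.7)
The plan is to imitate the proof of Proposition \ref{prop:oplus-tcomplete}, but applied to the family $\Theta := \{D_M \mid M \in \Max^t(D)\}$ and with the $t$-analogue $\Phi_t$ of the extension map provided by Lemma \ref{lemma:extInv}.

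First I would invoke the standard fact that $D = \bigcap\{D_M \mid M \in \Max^t(D)\}$: this is immediate from the observation that every proper $t$-ideal of $D$ is contained in a $t$-maximal ideal (so if $x\in K$ lies in every $D_M$ but not in $D$, the conductor $(D:_D xD)$ is a proper $t$-ideal disjoint from all $t$-maximal ideals, a contradiction). Each $D_M$ is flat over $D$, so Lemma \ref{lemma:extInv} gives an injective group homomorphism
\begin{equation*}
\Phi_t \colon \Inv^t(D) \longrightarrow \prod_{M\in\Max^t(D)} \Inv^t(D_M),\qquad I\longmapsto (ID_M)_{M\in\Max^t(D)}.
\end{equation*}

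Next I would show that $\Phi_t$ actually takes values in the direct sum. Fix $I\in\Inv^t(D)$ and pick any nonzero $x\in I$. By the $t$-finite character hypothesis, the set of $t$-maximal ideals containing $x$ is finite; for every other $M\in\Max^t(D)$, the element $x$ is a unit in $D_M$, so $ID_M\supseteq xD_M=D_M$, meaning $ID_M$ is the identity element of $\Inv^t(D_M)$. Thus $\Phi_t(I)$ has only finitely many nontrivial components.

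Finally, since $\Inv^t(D_M)$ is free for every $M$, the direct sum $\bigoplus_{M\in\Max^t(D)}\Inv^t(D_M)$ is free, and a subgroup of a free abelian group is free; hence $\Inv^t(D)\simeq\Phi_t(\Inv^t(D))$ is free. The only real subtlety in the argument is the passage from the product to the direct sum, which requires the finite-character bookkeeping above; the rest is a transcription of the proof of Proposition \ref{prop:oplus-tcomplete} with $d$ replaced by $t$.
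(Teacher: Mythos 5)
Your proof is correct and follows essentially the same route as the paper, whose own proof is literally ``apply Lemma \ref{lemma:extInv} to $\Theta=\{D_M\mid M\in\Max^t(D)\}$ and repeat the argument of Proposition \ref{prop:oplus-tcomplete}, with the $t$-finite character guaranteeing the local finiteness of $\Theta$.'' The one detail worth tightening is that for a fractional $I$ your argument only gives $ID_M\supseteq D_M$ at almost all $M$; to get equality, also choose a nonzero $d\in D$ with $dI\subseteq D$ and note that $d$ is a unit in all but finitely many $D_M$, so that $ID_M\subseteq d^{-1}D_M=D_M$ there as well.
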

\begin{proof}
It is enough to apply Lemma \ref{lemma:extInv} on $\Theta:=\{D_M\mid M\in\Max^t(D)\}$ and use the same method of the proof  of Proposition \ref{prop:oplus-tcomplete} with the $t$-finite character to guarantee the local finiteness of $\Theta$.
\end{proof}

\section{The Noetherian case}\label{sect:noeth}
In this section, we study the case in which $D$ is a Noetherian domain.

The integrally closed case can be done in greater generality; recall that $D$ is a \emph{Krull domain} is an integral domain that is the intersection of a locally finite family of discrete valuation rings, each of which is a localization of $D$. Every Noetherian integrally closed domain is a Krull domain and, indeed, the integral closure of every Noetherian domain (in its quotient field) is a Krull domain \cite{nagata-morinagata}.
\begin{prop}\label{prop:krull}
Let $D$ be a Krull domain. Then, $\Div(D)$, $\Inv(D)$ and $\insprinc(D)$ are free groups.
\end{prop}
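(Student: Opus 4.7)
The plan is to exploit that a Krull domain $D$ is, by definition, a locally finite intersection $D=\bigcap_P D_P$ of DVRs, with $P$ ranging over the height-one primes of $D$ and each $D_P$ a localization of $D$. Set $\Theta:=\{D_P \mid P$ a height-one prime of $D\}$; this is a locally finite family of flat overrings with $D=\bigcap\Theta$, and since each $D_P$ is a DVR we have $\Inv(D_P)=\insprinc(D_P)=\Div(D_P)\simeq\insZ$, which is free of rank one.

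For $\Inv(D)$ the three hypotheses of Proposition \ref{prop:oplus-tcomplete} are met by $\Theta$, giving freeness immediately. For $\insprinc(D)$, the inclusion $\insprinc(D)\subseteq\Inv(D)$ into a free abelian group, combined with the fact recalled in the Preliminaries that subgroups of free abelian groups are free, yields freeness.

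For $\Div(D)$ the machinery of Lemma \ref{lemma:extInv} does not apply verbatim (it is stated only for $\Inv$ and $\Inv^t$), so I would mimic its proof. Since each $D_P$ is a DVR, every nonzero fractional ideal of $D_P$ is already divisorial, so $I\mapsto ID_P$ is automatically a well-defined group homomorphism $\Div(D)\to\Div(D_P)$; bundling these over $P$ gives a homomorphism $\Psi\colon\Div(D)\to\prod_P\Div(D_P)$, and local finiteness of $\Theta$ forces the image into the direct sum $\bigoplus_P\Div(D_P)\simeq\bigoplus_P\insZ$. Injectivity of $\Psi$ reduces to showing $I=\bigcap_P ID_P$ for every $I\in\Div(D)$; this holds because $\wedge_\Theta\colon J\mapsto\bigcap_P JD_P$ is a star operation on $D$ (as in the proof of Lemma \ref{lemma:extInv}) and every divisorial ideal is closed under every star operation. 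Hence $\Div(D)$ embeds in a free abelian group and is itself free.

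The step most in need of care is the $\Div$ case: the crucial inputs are that the $v$-operation behaves well across the height-one localizations (almost automatic, since $v$ on a DVR is trivial) and that a divisorial ideal of a Krull domain is recovered by intersection over the height-one primes. These two facts are essentially the classical Krull factorization content; indeed, the $\Div$ part of the proposition can alternatively be read as a reformulation of the classical theorem that the group of divisorial fractional ideals of a Krull domain is freely generated by its height-one primes.
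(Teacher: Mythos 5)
Your argument is correct in substance but organized differently from the paper, which disposes of all three groups at once: since $v=t$ on a Krull domain, $\Div(D)=\Inv^t(D)$, and this is free by Proposition \ref{prop:oplus-tmax} (the $t$-maximal ideals are the height-one primes, their localizations are DVRs, and the $t$-finite character is built into the definition of a Krull domain); then $\Inv(D)$ and $\insprinc(D)$ are free as subgroups of $\Inv^t(D)$. You instead get $\Inv(D)$ from Proposition \ref{prop:oplus-tcomplete} and redo the $\Div$ case by hand, which works but has one soft spot: the assertion that $I\mapsto ID_P$ is ``automatically'' a group homomorphism $\Div(D)\to\Div(D_P)$. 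The triviality of $v$ on the DVR $D_P$ only guarantees that the target is all of $\Div(D_P)$; to see that the $v$-product is respected you still need $(IJ)^vD_P=(IJ)D_P$, i.e., that taking the divisorial closure in $D$ does not change the localization at a height-one prime. Your $\wedge_\Theta$ argument yields $A=\bigcap_P AD_P$ only for ideals $A$ already known to be divisorial, which settles injectivity but not this compatibility (applied to $A=(IJ)^v$ it gives $(IJ)^v=\bigcap_Q(IJ)^vD_Q$, not the desired comparison with $IJD_P$); one needs the classical description of $A^v$ as $\bigcap_Q AD_Q$ for \emph{arbitrary} $A$, or the approximation theorem. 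The cleanest repair is exactly the paper's opening observation: on a Krull domain $v=t$ is of finite type, hence extendable to the flat overrings $D_P$, so Lemma \ref{lemma:extInv} applies verbatim to $\Div(D)=\Inv^t(D)$. What your version buys is that the $\Inv$ and $\insprinc$ statements do not depend on the identity $v=t$; what the paper's version buys is a uniform two-line argument covering all three groups.
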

\begin{proof}
Since $D$ is a Krull domain, the $v$-operation coincides with the $t$-operation and thus $\Div(D)=\Inv^t(D)$. By Proposition \ref{prop:oplus-tmax}, $\Inv^t(D)$ is free; as a consequence, also its subgroups $\Inv(D)$ and $\insprinc(D)$ are free.
\end{proof}

We note that, for Krull domains, Lemma \ref{lemma:extInv} and Proposition \ref{prop:oplus-tmax} give a map
\begin{equation*}
\begin{aligned}
\widetilde{\Phi}\colon\Inv^t(D)  & \longrightarrow\bigoplus_{M\in\Max^t(D)}\Inv^t(D_M),\\
I & \longmapsto (ID_M)_{M\in\Max^t(D)}
\end{aligned}
\end{equation*}
that becomes an isomorphism, since $D_M$ is a DVR whenever $M\in\Max^t(D)$ and thus the $t$-operation on $D_M$ is the identity. Therefore, we obtain back the well-known result (see e.g. \cite[Chapter 1, Corollary 3.4]{fossum-krull}) that, for a Krull domain, the set $X^1(D)$ of prime ideals of height $1$ is a basis of $\Inv^t(D)=\Div(D)$.

In the non-integrally closed case, the freeness of $\insprinc(D)$ and $\Inv(D)$ is strongly connected to the units of $D$.
\begin{prop}\label{prop:ic-krull}
Let $D$ be an integral domain such that its integral closure $\overline{D}$ is a Krull domain. Then, $\insprinc(D)$ is free if and only if $\unit(\overline{D})/\unit(D)$ is free.
\end{prop}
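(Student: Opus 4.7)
The natural strategy is to exploit the exact sequence relating units to principal ideals, both for $D$ and for $\overline{D}$, and then glue them with the snake lemma.

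First I would write down the commutative diagram of exact sequences (with the identity in the middle column):
\begin{equation*}
\begin{tikzcd}
0\arrow[r] & \unit(D)\arrow[r]\arrow[d,hook] & \unit(K)\arrow[r]\arrow[d,equal] & \insprinc(D)\arrow[r]\arrow[d,"h"] &  0\\
0\arrow[r] & \unit(\overline{D})\arrow[r] & \unit(K)\arrow[r] & \insprinc(\overline{D})\arrow[r] & 0,
\end{tikzcd}
\end{equation*}
where $h$ is the extension map $xD\mapsto x\overline{D}$. The leftmost vertical map is the inclusion $\unit(D)\hookrightarrow\unit(\overline{D})$, which is injective, and the middle one is the identity, so by the snake lemma the induced six-term sequence collapses to an isomorphism $\ker(h)\simeq\coker(\unit(D)\hookrightarrow\unit(\overline{D}))=\unit(\overline{D})/\unit(D)$ and $\coker(h)=0$. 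Hence there is a short exact sequence
\begin{equation*}
0\longrightarrow \unit(\overline{D})/\unit(D)\longrightarrow \insprinc(D)\stackrel{h}{\longrightarrow} \insprinc(\overline{D})\longrightarrow 0.
\end{equation*}

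Second, I would invoke Proposition \ref{prop:krull}: since $\overline{D}$ is a Krull domain, $\insprinc(\overline{D})$ is free, hence projective. Therefore the displayed sequence splits, giving an isomorphism
\begin{equation*}
\insprinc(D)\simeq \insprinc(\overline{D})\oplus \unit(\overline{D})/\unit(D).
\end{equation*}

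From this decomposition the equivalence follows immediately: if $\unit(\overline{D})/\unit(D)$ is free then $\insprinc(D)$ is a direct sum of two free groups, hence free; conversely, if $\insprinc(D)$ is free then its direct summand $\unit(\overline{D})/\unit(D)$ is a subgroup of a free abelian group and therefore is itself free. There isn't really a hard step here: the only thing to be a little careful about is checking, as above, that the map $h$ in the right column is surjective with kernel equal to $\unit(\overline{D})/\unit(D)$, and for that one just chases the snake lemma using the fact that the middle vertical is the identity.
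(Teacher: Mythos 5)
Your proof is correct and follows essentially the same route as the paper: the same commutative diagram with identity middle column, the snake lemma producing the short exact sequence $0\to\unit(\overline{D})/\unit(D)\to\insprinc(D)\to\insprinc(\overline{D})\to 0$, and the splitting via freeness of $\insprinc(\overline{D})$ from Proposition \ref{prop:krull}. No gaps.
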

\begin{proof}
The natural maps $\unit(D)\longrightarrow\unit(\overline{D})$ and $\insprinc(D)\longrightarrow\insprinc(\overline{D})$, $xD\mapsto x\overline{D}$, give rise to a commutative diagram
\begin{equation*}
\begin{tikzcd}
0\arrow[r] & \unit(D)\arrow[r]\arrow[d] & \unit(K)\arrow[r]\arrow[d,equal] & \insprinc(D)\arrow[r]\arrow[d] &  0\\
0\arrow[r] & \unit(\overline{D})\arrow[r] & \unit(K)\arrow[r] & \insprinc(\overline{D})\arrow[r] & 0.
\end{tikzcd}
\end{equation*}
The leftmost vertical map is injective, the middle one is the identity and the rightmost one is surjective; by the snake lemma, the kernel of $\insprinc(D)\longrightarrow\insprinc(\overline{D})$ is isomorphic to the cokernel of $\unit(D)\longrightarrow\unit(\overline{D})$, i.e., there is an exact sequence
\begin{equation*}
0\longrightarrow \frac{\unit(\overline{D})}{\unit(D)}\longrightarrow \insprinc(D)\longrightarrow\insprinc(\overline{D})\longrightarrow 0.
\end{equation*}
Since $\overline{D}$ is Krull, the group $\insprinc(\overline{D})$ is free by Proposition \ref{prop:krull}; thus, the sequence splits and $\insprinc(D)\simeq\insprinc(\overline{D})\oplus\frac{\unit(\overline{D})}{\unit(D)}$. The claim follows.
\end{proof}

There are two immediate specializations of the previous proposition that are of interest.
\begin{cor}
Let $D$ be an local integral domain such that its integral closure $\overline{D}$ is a Krull domain. Then, $\Inv(D)$ is free if and only if $\unit(\overline{D})/\unit(D)$ is free.
\end{cor}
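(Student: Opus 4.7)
The plan is to reduce this corollary to Proposition \ref{prop:ic-krull} by observing that in the local case the groups $\Inv(D)$ and $\insprinc(D)$ coincide. Indeed, the preliminaries record that whenever $\Max(D)$ consists of a single maximal ideal, every finitely generated locally principal ideal is already principal, so every invertible ideal of the local domain $D$ is principal. Consequently $\Inv(D)=\insprinc(D)$ as subgroups of the group of fractional ideals.

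Once this identification is made, the claim is immediate: Proposition \ref{prop:ic-krull} applies since $\overline{D}$ is Krull by hypothesis, yielding that $\insprinc(D)$ is free if and only if $\unit(\overline{D})/\unit(D)$ is free, and substituting $\Inv(D)$ for $\insprinc(D)$ gives exactly the desired equivalence.

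There is essentially no obstacle here; the only thing worth double-checking is that the local hypothesis is being used only on $D$ and not on $\overline{D}$ (which need not be local, since the integral closure of a local domain can have several maximal ideals lying over the maximal ideal of $D$), but this is harmless because Proposition \ref{prop:ic-krull} imposes no locality assumption on $\overline{D}$, only that it be Krull.
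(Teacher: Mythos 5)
Your proof is correct and follows exactly the paper's argument: since $D$ is local, every invertible ideal is principal, so $\Inv(D)=\insprinc(D)$, and Proposition \ref{prop:ic-krull} then gives the equivalence. (Your remark that no locality is needed for $\overline{D}$ is a sensible sanity check and matches the paper's setup.)
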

\begin{proof}
If $D$ is free, then $\Inv(D)=\insprinc(D)$. The claim follows from Proposition \ref{prop:ic-krull}.
\end{proof}

\begin{cor}\label{cor:noeth-quozunit}
Let $D$ be a Noetherian domain and $\overline{D}$ its integral closure. Then, $\insprinc(D)$ is free if and only if $\unit(\overline{D})/\unit(D)$ is free.
\end{cor}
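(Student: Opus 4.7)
The plan is simply to reduce this to Proposition \ref{prop:ic-krull}. That proposition has as its only hypothesis that the integral closure $\overline{D}$ is a Krull domain, and concludes precisely the equivalence we want. So the whole task is to check that hypothesis in the Noetherian setting.

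The required input is the Mori--Nagata theorem, which is already cited in the excerpt preceding Proposition \ref{prop:krull}: the integral closure of a Noetherian domain in its quotient field is a Krull domain \cite{nagata-morinagata}. Once we invoke this, $\overline{D}$ satisfies the hypothesis of Proposition \ref{prop:ic-krull}, and the corollary follows immediately.

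There is no real obstacle here; the proof consists of two lines: cite Mori--Nagata to know $\overline{D}$ is Krull, then apply Proposition \ref{prop:ic-krull} to conclude that $\insprinc(D)$ is free if and only if $\unit(\overline{D})/\unit(D)$ is free. No extra bookkeeping is needed since the exact sequence
\begin{equation*}
0\longrightarrow \frac{\unit(\overline{D})}{\unit(D)}\longrightarrow \insprinc(D)\longrightarrow\insprinc(\overline{D})\longrightarrow 0
\end{equation*}
and the freeness of $\insprinc(\overline{D})$ were already established inside the proof of Proposition \ref{prop:ic-krull}.
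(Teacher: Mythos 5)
Your proof is correct and is exactly the paper's argument: invoke Mori--Nagata to see that $\overline{D}$ is Krull, then apply Proposition \ref{prop:ic-krull}. Nothing further is needed.
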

\begin{proof}
If $D$ is Noetherian, then $\overline{D}$ is a Krull domain \cite{nagata-morinagata}. The claim follows from Proposition \ref{prop:ic-krull}.
\end{proof}

Putting the previous corollaries together, we have:
\begin{cor}\label{cor:noethloc-quozunit}
Let $D$ be a Noetherian local domain and $\overline{D}$ its integral closure. Then, $\Inv(D)$ is free if and only if $\unit(\overline{D})/\unit(D)$ is free.
\end{cor}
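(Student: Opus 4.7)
The plan is to just combine the two preceding corollaries, since Corollary \ref{cor:noethloc-quozunit} is essentially their conjunction. The two hypotheses (Noetherian and local) each supply one of the ingredients needed to invoke Proposition \ref{prop:ic-krull}.

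First I would use that $D$ is local to identify $\Inv(D)$ with $\insprinc(D)$. Indeed, as recalled in the Preliminaries, over a local (or semilocal) domain every invertible ideal is principal, so the subgroup inclusion $\insprinc(D)\subseteq\Inv(D)$ is an equality. Hence the freeness of $\Inv(D)$ is equivalent to the freeness of $\insprinc(D)$.

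Second, I would use that $D$ is Noetherian to invoke the Mori--Nagata theorem (cited in the paper as \cite{nagata-morinagata}), which tells us that the integral closure $\overline{D}$ of a Noetherian domain is a Krull domain. With this, the hypothesis of Proposition \ref{prop:ic-krull} is satisfied, so $\insprinc(D)$ is free if and only if $\unit(\overline{D})/\unit(D)$ is free. Combining the two equivalences yields the corollary. There is no real obstacle here: the substance is entirely contained in Proposition \ref{prop:ic-krull}, and this corollary is just the specialization to the Noetherian local setting where the distinction between $\Inv(D)$ and $\insprinc(D)$ disappears.
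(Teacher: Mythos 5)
Your proposal is correct and matches the paper's argument exactly: the paper also obtains this corollary by combining the local case (where $\Inv(D)=\insprinc(D)$ since every invertible ideal over a local domain is principal) with the Mori--Nagata theorem guaranteeing that $\overline{D}$ is Krull, and then invoking Proposition \ref{prop:ic-krull}. No issues.
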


We now study in more detail the one-dimensional case.

Let $\mathfrak{c}:=(D:\overline{D})$ be the conductor of the extension $D\subseteq\overline{D}$. Then, $\mathfrak{c}\neq(0)$ if and only if $D$ is analytically unramified, i.e., if and only if the completion of $D$ is reduced. In this case, we can study the quotient $\unit(\overline{D})/\unit(D)$ through quotienting by $\mathfrak{c}$.
\begin{lemma}\label{lemma:quozc-surj}
Let $A$ be a domain, and $I\subseteq\Jac(A)$ be an ideal. Then, the natural map $\phi:\unit(A)\longrightarrow\unit(A/I)$ is surjective.
\end{lemma}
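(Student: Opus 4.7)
The plan is to use the standard characterization of the Jacobson radical: $I\subseteq\Jac(A)$ if and only if $1+i$ is a unit of $A$ for every $i\in I$. Given this, surjectivity is essentially automatic by lifting and correcting.

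More precisely, I would start with an arbitrary $\bar u\in\unit(A/I)$ and pick any preimage $u\in A$ under the projection $\pi:A\to A/I$. Since $\bar u$ is a unit, there exists $\bar v\in A/I$ with $\bar u\bar v=1$; lifting $\bar v$ to some $v\in A$ we get $uv=1+i$ for a suitable $i\in I$. The key step is then to invoke $I\subseteq\Jac(A)$ to conclude that $1+i\in\unit(A)$. From $uv=1+i$ it follows that $u$ divides the unit $1+i$, hence $u\in\unit(A)$ (with inverse $v(1+i)^{-1}$). Finally $\phi(u)=\pi(u)=\bar u$, proving surjectivity.

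There is no real obstacle here; the only thing worth being careful about is the characterization $I\subseteq\Jac(A)\iff 1+I\subseteq\unit(A)$, which is a one-line consequence of the fact that $\Jac(A)$ equals the intersection of all maximal ideals (if some $1+i$ were a non-unit, it would lie in a maximal ideal $M$, but then $i\in\Jac(A)\subseteq M$ would force $1\in M$, a contradiction). Once this is in hand, the argument is a direct application.
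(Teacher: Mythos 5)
Your argument is correct and is essentially identical to the paper's proof: both lift the unit and its inverse to $A$, observe that their product lies in $1+I\subseteq 1+\Jac(A)\subseteq\unit(A)$, and conclude that the lift is itself a unit. No further comment is needed.
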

\begin{proof}
Let $u=a+I\in\unit(A/I)$. Then, there is a $b\in A$ such that $(a+I)(b+I)=1+I$, i.e., $ab-1\in I\subseteq\Jac(A)$. Hence, $ab\in 1+\Jac(A)\subseteq\unit(A)$. Thus $a\in\unit(A)$ and $u=\phi(a)$, so that $\phi$ is surjective.
\end{proof}

\begin{lemma}\label{lemma:ext->surj}
Let $A\subseteq B$ be an extension of domains, and let $I$ be a common ideal of $A$ and $B$ that is contained in their Jacobson radical. Then, $\displaystyle{\frac{\unit(B)}{\unit(A)}\simeq\frac{\unit(B/I)}{\unit(A/I)}}$.
\end{lemma}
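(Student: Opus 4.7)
The plan is to apply the snake lemma to a pair of reduction-modulo-$I$ exact sequences on unit groups. First I would use Lemma \ref{lemma:quozc-surj}, applied separately to $A$ and to $B$ (both of which satisfy $I\subseteq\Jac(\,\cdot\,)$), to obtain surjections $\unit(A)\twoheadrightarrow\unit(A/I)$ and $\unit(B)\twoheadrightarrow\unit(B/I)$. In each case the kernel is exactly the multiplicative subgroup $1+I$: the containment $\ker\subseteq 1+I$ is formal, and the reverse inclusion uses the Jacobson-radical hypothesis, which forces $1+I$ to consist of units in both rings.

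Next I would assemble these into the commutative diagram with exact rows
\begin{equation*}
\begin{tikzcd}
0\arrow[r] & 1+I\arrow[r]\arrow[d,equal] & \unit(A)\arrow[r]\arrow[d] & \unit(A/I)\arrow[r]\arrow[d] & 0\\
0\arrow[r] & 1+I\arrow[r] & \unit(B)\arrow[r] & \unit(B/I)\arrow[r] & 0,
\end{tikzcd}
\end{equation*}
in which the middle vertical arrow is induced by the inclusion $A\hookrightarrow B$, and the rightmost arrow comes from the induced inclusion $A/I\hookrightarrow B/I$. The latter is indeed injective because $I\subseteq A$ gives $A\cap I=I$, so elements of $A$ that land in $I\subseteq B$ were already in $I$. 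Thus the leftmost vertical map is the identity, and the other two vertical maps are injective.

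Finally I would invoke the snake lemma. Since the leftmost vertical map is an isomorphism, its kernel and cokernel vanish; since the other two vertical maps are injective, their kernels also vanish. The six-term exact sequence therefore collapses to
\begin{equation*}
0\longrightarrow \coker(\unit(A)\to\unit(B))\longrightarrow \coker(\unit(A/I)\to\unit(B/I))\longrightarrow 0,
\end{equation*}
which is exactly the desired isomorphism $\unit(B)/\unit(A)\simeq\unit(B/I)/\unit(A/I)$, and it is induced by reduction modulo $I$ as one would expect.

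I do not anticipate a real obstacle: the only point requiring attention is verifying that the kernels of the two horizontal reductions coincide as the \emph{same} group $1+I$ inside each of $\unit(A)$ and $\unit(B)$, which is precisely where the hypothesis $I\subseteq\Jac(A)\cap\Jac(B)$ enters.
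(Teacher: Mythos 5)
Your proof is correct and is essentially the paper's argument with the snake-lemma diagram transposed: the paper runs the snake along the reduction maps $\phi_A,\phi_B$ (rows $0\to\unit(A)\to\unit(B)\to\unit(B)/\unit(A)\to 0$ with vertical reductions mod $I$), while you run it along the inclusions (rows $0\to 1+I\to\unit(\,\cdot\,)\to\unit(\,\cdot\,/I)\to 0$ with vertical inclusions). Both versions hinge on exactly the same two facts --- surjectivity of $\unit(\,\cdot\,)\to\unit(\,\cdot\,/I)$ from Lemma \ref{lemma:quozc-surj} and the identification of both kernels with the \emph{same} group $1+I$ --- so nothing is gained or lost.
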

\begin{proof}
Let $\phi_A:\unit(A)\longrightarrow\unit(A/I)$ and $\phi_B:\unit(B)\longrightarrow\unit(B/I)$ be the natural maps; by Lemma \ref{lemma:quozc-surj}, they are surjective. Consider the commutative diagram
\begin{equation*}
\begin{tikzcd}
0\arrow[r] & \unit(A)\arrow[d,"\phi_A"]\arrow[r] & \unit(B)\arrow[d,"\phi_B"]\arrow[r] & \unit(B)/\unit(A)\arrow[d,"\psi"]\arrow[r] & 0\\
0\arrow[r] & \unit(A/I)\arrow[r] & \unit(B/I)\arrow[r] & \unit(B/I)/\unit(A/I)\arrow[r] & 0.
\end{tikzcd}
\end{equation*}
By the snake lemma, the surjectivity of $\phi_A$ and $\phi_B$ implies that $\psi$ is surjective, and that we have an exact sequence
\begin{equation*}
0\longrightarrow\ker\phi_A\longrightarrow\ker\phi_B\longrightarrow\ker\psi\longrightarrow 0.
\end{equation*}
The kernels of $\phi_A$ and $\phi_B$ coincide, since they are both equal to $1+I$; thus, we must also have $\ker\psi=0$. Hence, $\psi$ is both injective and surjective, thus a homeomorphism.
\end{proof}

\begin{lemma}\label{lemma:additive-omf-free}
Let $K$ be a field and $F$ be a free group; let $G$ be the additive group of $K$. Then, for every integer $n$, $\Hom(G^n,F)=0$. In particular, no quotient of $G^n$ is free.
\end{lemma}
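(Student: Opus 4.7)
The plan is to reduce immediately to the case $n=1$. Since $G^n$ is a finite direct sum of copies of $G$, we have
\begin{equation*}
\Hom(G^n,F)\simeq\Hom(G,F)^n,
\end{equation*}
so it suffices to prove that every group homomorphism $\phi\colon G\longrightarrow F$ is zero.

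I would then split into two cases according to the characteristic of $K$. If $\mathrm{char}(K)=p>0$, then $p\cdot x=0$ for every $x\in G$, so $G$ is a torsion group; since any free abelian group is torsion-free (as a subgroup of $F=\bigoplus\insZ$ is torsion-free), $\phi$ must vanish identically. If instead $\mathrm{char}(K)=0$, then $K$ contains $\insQ$ as a subfield, so $G$ is a $\insQ$-vector space and in particular a divisible abelian group: for every $x\in G$ and every positive integer $m$ there exists $y\in G$ with $x=my$. The image $\phi(G)$ is therefore also divisible. On the other hand, $\phi(G)$ is a subgroup of the free abelian group $F$, hence itself free; but the only divisible free abelian group is the trivial one, since any nonzero element of $\bigoplus\insZ$ has at least one coordinate that is not divisible by every positive integer in $\insZ$. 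Thus $\phi(G)=0$, so $\phi=0$.

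The final statement follows at once: if $H\subseteq G^n$ is a subgroup with $G^n/H$ free, then the quotient map $\pi\colon G^n\longrightarrow G^n/H$ belongs to $\Hom(G^n,G^n/H)=0$, forcing $G^n/H=0$. The only real obstacle is the need to treat the two characteristics separately, since the structural reason $\phi$ is zero is different in each case (torsion vs.\ divisibility); but in both cases the argument is short, and the key point is the dichotomy that $G$ is either entirely torsion or entirely divisible, while a nonzero free abelian group admits neither feature.
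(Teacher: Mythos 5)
Your proof is correct. It differs from the paper's in how it organizes the argument: the paper reduces to a surjective homomorphism $\phi\colon G\to\insZ$ (first passing to a subgroup of $F$, then composing with a nonzero map $F\to\insZ$) and then runs a single unified argument — choose an integer $m>1$ not divisible by $\mathrm{char}(K)$, use the fact that multiplication by $m$ is surjective on $G$ to write $x=my$ with $\phi(x)=1$, and derive the contradiction $1=m\phi(y)$ in $\insZ$. You instead split by characteristic: in characteristic $p$ the group $G$ is torsion and $F$ is torsion-free, so $\phi=0$ immediately; in characteristic $0$ the group $G$ is divisible, its image is a divisible subgroup of a free group, hence free and divisible, hence trivial. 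The underlying obstruction is the same in both treatments — a nonzero free abelian group admits neither torsion nor any divisibility — but your case split makes the positive-characteristic case more elementary (no divisibility needed at all), while the paper's choice of a single $m$ coprime to the characteristic buys uniformity and avoids the reduction to $F=\insZ$ being split across cases. Your handling of the ``in particular'' clause is also slightly cleaner, since it correctly isolates the trivial quotient as the only free one rather than appealing to the quotient map being nonzero.
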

\begin{proof}
As subgroups of free groups are free, it is enough to consider surjective homomorphisms; moreover, composing with a nonzero homomorphism $F\longrightarrow\insZ$, we can suppose that $F=\insZ$ is cyclic. Furthermore, since $G^n$ is the direct sum of $n$ copies of $G$, by the properties of the hom-sets we can suppose that $n=1$.

Let thus $\phi:G\longrightarrow\insZ$ be such a homomorphism. 

Let $n>1$ be a positive integer that is not a multiple of to the characteristic of $K$. Since $\phi$ is surjective, there is an $x$ such that $\phi(x)=1$. By the choice of $n$, there is an $y$ such that $x=ny$: thus $1=\phi(x)=\phi(ny)=n\phi(y)$, which is impossible in $\insZ$. Thus $\phi$ must be the zero homomorphism, as claimed.

The ``in particular'' statement follows from the fact that a quotient $G^n\longrightarrow H$ is a nonzero homomorphism.
\end{proof}

We now distinguish three cases according to the properties of $\mathfrak{c}$ as an ideal of $\overline{D}$.
\begin{prop}\label{prop:noeth-norad}
Let $(D,\mm)$ be a local Noetherian domain of dimension $1$, and let $\overline{D}$ be its integral closure; suppose that $D\neq\overline{D}$ and that $\mathfrak{c}:=(D:\overline{D})\neq(0)$. If $\mathfrak{c}$ is not a radical ideal of $\overline{D}$, then $\Inv(D)$ is not free.
\end{prop}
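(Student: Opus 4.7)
The plan is to combine Corollary \ref{cor:noethloc-quozunit} with Lemmas \ref{lemma:ext->surj} and \ref{lemma:additive-omf-free}. Since $D$ is local, the corollary reduces the claim to showing that $\unit(\overline{D})/\unit(D)$ is not free. Because $D$ is one-dimensional analytically unramified, $\overline{D}$ is a finitely generated $D$-module, hence semilocal, and every maximal ideal of $\overline{D}$ contracts to $\mathfrak{m}$; thus $\mathfrak{c}$ is contained in $\Jac(D)$ and in $\Jac(\overline{D})$, and Lemma \ref{lemma:ext->surj} gives
\[
\frac{\unit(\overline{D})}{\unit(D)}\simeq\frac{\unit(\overline{D}/\mathfrak{c})}{\unit(D/\mathfrak{c})},
\]
so it suffices to show that the right-hand side is not free.

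Let $J:=\Jac(\overline{D})$; since $\overline{D}/\mathfrak{c}$ is Artinian, $J=\sqrt{\mathfrak{c}}$ in $\overline{D}$. Because $\mathfrak{c}$ is not radical, $J\supsetneq\mathfrak{c}$, so I can let $r\geq 2$ be the smallest integer with $J^r\subseteq\mathfrak{c}$. I then consider the subgroup
\[
\bar{H}:=1+\frac{J^{r-1}+\mathfrak{c}}{\mathfrak{c}}\subseteq\unit(\overline{D}/\mathfrak{c}).
\]
Because $J^{2(r-1)}\subseteq J^r\subseteq\mathfrak{c}$, the map $x\mapsto 1+x$ turns the addition of $(J^{r-1}+\mathfrak{c})/\mathfrak{c}$ into the multiplication of $\bar{H}$, so $\bar{H}$ is isomorphic, as an abelian group, to the additive group of $(J^{r-1}+\mathfrak{c})/\mathfrak{c}$.

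Projecting $\bar{H}$ into $\unit(\overline{D}/\mathfrak{c})/\unit(D/\mathfrak{c})$, and using that $\unit(D/\mathfrak{c})=(D/\mathfrak{c})\cap\unit(\overline{D}/\mathfrak{c})$ together with the identity $(J^{r-1}+\mathfrak{c})\cap D=(J^{r-1}\cap D)+\mathfrak{c}$, the image of $\bar{H}$ is isomorphic to
\[
\frac{J^{r-1}+\mathfrak{c}}{(J^{r-1}\cap D)+\mathfrak{c}}.
\]
The step I expect to be the main obstacle is showing that this image is nonzero: if it vanished then $J^{r-1}\subseteq D$, but then the $\overline{D}$-ideal $J^{r-1}$ would be contained in $D$, forcing $J^{r-1}\subseteq(D:\overline{D})=\mathfrak{c}$ and contradicting the minimality of $r$. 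This is the only place where the definition of the conductor is used essentially.

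To conclude, note that the image is annihilated by $J$ (since $J\cdot J^{r-1}\subseteq\mathfrak{c}$), hence is a nonzero finitely generated module over the semisimple ring $\overline{D}/J\simeq\prod_i k_i$, where the $k_i$ are the residue fields at the maximal ideals of $\overline{D}$; consequently it contains a submodule isomorphic, as an abelian group, to the additive group $(k_i,+)$ of some $k_i$. By Lemma \ref{lemma:additive-omf-free} this group admits no nonzero homomorphism to a free abelian group, so it cannot embed into one. Therefore $\unit(\overline{D}/\mathfrak{c})/\unit(D/\mathfrak{c})$ is not free, and neither is $\Inv(D)$.
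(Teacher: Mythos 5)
Your proof is correct and follows essentially the same route as the paper: reduce via Corollary \ref{cor:noethloc-quozunit} and Lemma \ref{lemma:ext->surj} to $\unit(\overline{D}/\mathfrak{c})/\unit(D/\mathfrak{c})$, exhibit a multiplicative subgroup $1+I$ for an $\overline{D}$-ideal $I$ with $I^2\subseteq\mathfrak{c}$ and $I\nsubseteq D$ (the paper takes $I=\mm_1^{e_1-1}\mm_2^{e_2}\cdots\mm_n^{e_n}$ from the factorization $\mathfrak{c}=\mm_1^{e_1}\cdots\mm_n^{e_n}$, you take $J^{r-1}$; both are ruled out of $D$ by the maximality of the conductor), identify it with an additive group killed by the radical, and conclude with Lemma \ref{lemma:additive-omf-free}. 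One small inaccuracy at the end: the image $\bigl(J^{r-1}+\mathfrak{c}\bigr)/\bigl((J^{r-1}\cap D)+\mathfrak{c}\bigr)$ need not be an $\overline{D}/J$-module, since the denominator is only a $D$-submodule, so you cannot directly extract a $k_i$-submodule. This is harmless: the image is a nonzero \emph{quotient group} of the $\overline{D}/J$-module $(J^{r-1}+\mathfrak{c})/\mathfrak{c}$, i.e.\ of a finite direct sum of additive groups of fields, and the ``no nonzero quotient is free'' form of Lemma \ref{lemma:additive-omf-free} (which is exactly how the paper invokes it) finishes the argument.
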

\begin{proof}
Let $\mm_1,\ldots,\mm_n$ be the maximal ideals of $\overline{D}$ (there are only finitely many of them since $\overline{D}$ is the integral closure of the one-dimensional local Noetherian domain); note that $\mathfrak{c}\subseteq\mm_i$ for every $i$ and thus $\mathfrak{c}\subseteq\Jac(\overline{D})$. Since $\mathfrak{c}\neq(0)$, we can write $\mathfrak{c}=\mm_1^{e_1}\cdots\mm_n^{e_n}$ for some natural numbers $e_i>0$. As $\mathfrak{c}$ is not radical, we have $e_i>1$ for some $i$; without loss of generality we can suppose that $e_1>1$.

Let $A:=D/\mathfrak{c}$ and $B:=\overline{D}/\mathfrak{c}$: then, $A$ is a local Artinian ring while $B$ is an Artinian ring with $n$ maximal ideals, say $\nn_i:=\mm_i/\mathfrak{c}$. By Lemma \ref{lemma:ext->surj}, $\unit(B)/\unit(A)\simeq\unit(\overline{D})/\unit(D)$; by Corollary \ref{cor:noethloc-quozunit}, we only need to prove that $\unit(B)/\unit(A)$ is not free.

Let $I:=\nn_1^{e_1-1}\cdots\nn_n^{e_n}$; then, $I\subseteq\Jac(B)$ and $(0)=I\nn_1$. Moreover, $I\nsubseteq A$ since $I$ is the image of $I_0:=\mm_1^{e_1-1}\cdots\mm_n^{e_n}$ and $I_0$ is larger than $\mathfrak{c}=(D:\overline{D})$, that by definition is the largest ideal that is common to $D$ and $\overline{D}$.

Consider $H:=1+I$. Since $I\subseteq\Jac(B)$, $H\subseteq\unit(B)$; moreover, if $1+t,1+t'\in H$, then
\begin{equation*}
(1+t)(1+t')=1+t+t'+tt'=1+t+t'
\end{equation*}
since $tt'\in I^2=\nn_1^{2(e_1-1)}\cdots\nn_n^{2e_n}=(0)$ (using $e_1>1$). Therefore, $H$ is a subgroup of $\unit(B)$, and the map
\begin{equation*}
\begin{aligned}
H  & \longrightarrow (I,+),\\
1+t & \longmapsto t
\end{aligned}
\end{equation*}
is a group isomorphism. By construction, $I\nn_1=(0)$; therefore, $I$ is a $k:=B/\nn_1$-vector space, that is of finite dimension since $B$ is Noetherian.

The quotient $H':=H/(H\cap\unit(A))$ is a subgroup of $\unit(B)/\unit(A)$; if the latter were free, then also $H'$ would be free. Moreover, since $I\nsubseteq A$ we have $H'\neq(0)$. However, $H'$ is isomorphic to a quotient of $(I,+)$, which is isomorphic to the direct sum of finitely many copies of $(k,+)$; by Lemma \ref{lemma:additive-omf-free}, no such quotient can be free, and thus $H'$ is not free. Hence $\unit(B)/\unit(A)$ is not free, and thus neither are $\unit(\overline{D})/\unit(D)$ and $\Inv(D)$.
\end{proof}

We now turn to the case where $\mathfrak{c}$ is a nonzero radical ideal. One case can only be solved almost tautologically.
\begin{prop}\label{prop:noeth-rad-ovDloc}
Let $(D,\mm)$ be a local Noetherian domain of dimension $1$, and let $\overline{D}$ be its integral closure; suppose that $\mathfrak{c}:=(D:\overline{D})$ is the unique maximal ideal of $\overline{D}$. Then, $\Inv(D)$ is free if and only if $\unit(\overline{D}/\mathfrak{c})/\unit(D/\mm)$ is free.
\end{prop}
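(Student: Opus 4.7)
The plan is to combine Corollary \ref{cor:noethloc-quozunit} with Lemma \ref{lemma:ext->surj}, after making one preliminary observation about the conductor in this very restrictive setting.

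First I would reduce the question to one about unit groups: Corollary \ref{cor:noethloc-quozunit} says that $\Inv(D)$ is free if and only if $\unit(\overline{D})/\unit(D)$ is free, so the task becomes showing that this latter quotient is isomorphic to $\unit(\overline{D}/\mathfrak{c})/\unit(D/\mm)$. The natural candidate tool is Lemma \ref{lemma:ext->surj} applied to the extension $D\subseteq\overline{D}$ with the common ideal $I=\mathfrak{c}$. To apply it, I must check that $\mathfrak{c}\subseteq\Jac(D)\cap\Jac(\overline{D})$. On the $D$-side this is automatic, since $\mathfrak{c}\subseteq D$ and $D$ is local with maximal ideal $\mm$, so $\mathfrak{c}\subseteq\mm=\Jac(D)$. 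On the $\overline{D}$-side the hypothesis is exactly what is needed: $\mathfrak{c}$ is by assumption the unique maximal ideal of $\overline{D}$, so $\mathfrak{c}=\Jac(\overline{D})$.

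Next I would identify $D/\mathfrak{c}$ with $D/\mm$. Since $\overline{D}$ is integral over $D$, the maximal ideal $\mathfrak{c}$ of $\overline{D}$ contracts to a maximal ideal of $D$, hence $\mathfrak{c}\cap D=\mm$. But $\mathfrak{c}$ is already contained in $D$, so $\mathfrak{c}=\mathfrak{c}\cap D=\mm$. In particular $D/\mathfrak{c}=D/\mm$ is a field, and $\unit(D/\mathfrak{c})=\unit(D/\mm)$.

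Putting the pieces together, Lemma \ref{lemma:ext->surj} yields
\begin{equation*}
\frac{\unit(\overline{D})}{\unit(D)}\simeq\frac{\unit(\overline{D}/\mathfrak{c})}{\unit(D/\mathfrak{c})}=\frac{\unit(\overline{D}/\mathfrak{c})}{\unit(D/\mm)},
\end{equation*}
and combining this with Corollary \ref{cor:noethloc-quozunit} gives the equivalence claimed in the statement. There is no real obstacle here: the result is essentially a direct splicing of the two previous results, the only substantive observation being that the hypothesis ``$\mathfrak{c}$ is the unique maximal ideal of $\overline{D}$'' forces $\mathfrak{c}=\mm$, which simultaneously produces the Jacobson radical condition needed for Lemma \ref{lemma:ext->surj} and replaces $\unit(D/\mathfrak{c})$ by $\unit(D/\mm)$ in the conclusion.
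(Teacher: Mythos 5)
Your proposal is correct and follows the same route as the paper: reduce to $\unit(\overline{D})/\unit(D)$ via the local Noetherian corollary and then apply Lemma \ref{lemma:ext->surj} with $I=\mathfrak{c}$. The paper states this in one line, while you usefully spell out the verifications it leaves implicit (that $\mathfrak{c}\subseteq\Jac(D)\cap\Jac(\overline{D})$ and that $\mathfrak{c}=\mm$, so $\unit(D/\mathfrak{c})=\unit(D/\mm)$), all of which are accurate.
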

\begin{proof}
By Lemma \ref{lemma:ext->surj} and Corollary \ref{cor:noeth-quozunit}, $\Inv(D)$ is free if and only if $\unit(\overline{D})/\unit(D)\simeq\unit(\overline{D}/\mathfrak{c})/\unit(D/\mm)$ is free.
\end{proof}

When $\overline{D}$ has more than one maximal ideal, the condition equivalent to the freeness of $\Inv(D)$ is more complicated. We premise a group-theoretic lemma.
\begin{lemma}\label{lemma:amalgamato}
Let $G,A_1,\ldots,A_n$ be group. For each $i$, let $\phi_i:G\longrightarrow A_i$ be injective homomorphisms and suppose that $A_i=B_i\oplus\phi_i(G)$ for some subgroup $B_i$. Let $\phi:G\longrightarrow A_1\oplus\cdots\oplus A_n$ be the compositum of all the $\phi_i$. Then, $(A_1\oplus\cdots\oplus A_n)/\phi(G)\simeq B_1\oplus\cdots\oplus B_n\oplus G^{n-1}$.
\end{lemma}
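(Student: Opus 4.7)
The plan is to reduce the statement to a computation about the diagonal subgroup of $G^n$. First, using the splittings $A_i=B_i\oplus\phi_i(G)$, I would build a group isomorphism
\begin{equation*}
\Psi\colon A_1\oplus\cdots\oplus A_n \longrightarrow B_1\oplus\cdots\oplus B_n\oplus G^n
\end{equation*}
by sending, in each component, an element $b_i+\phi_i(g_i)\in A_i$ to $(b_i,g_i)\in B_i\oplus G$; this is well-defined and bijective because the sum $A_i=B_i\oplus\phi_i(G)$ is internal direct and $\phi_i$ is injective. Under $\Psi$, the image $\phi(G)$ of the diagonal map $g\mapsto(\phi_1(g),\ldots,\phi_n(g))$ is carried exactly onto the subgroup $\{0\}\oplus\cdots\oplus\{0\}\oplus\Delta$, where $\Delta:=\{(g,g,\ldots,g)\mid g\in G\}$ is the diagonal of $G^n$.

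Since $\Psi(\phi(G))$ lies entirely inside the $G^n$-summand, the quotient splits as
\begin{equation*}
\frac{A_1\oplus\cdots\oplus A_n}{\phi(G)} \;\simeq\; B_1\oplus\cdots\oplus B_n\oplus\frac{G^n}{\Delta}.
\end{equation*}
Thus the whole claim reduces to checking that $G^n/\Delta\simeq G^{n-1}$.

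For this last step I would exhibit the homomorphism
\begin{equation*}
\sigma\colon G^n\longrightarrow G^{n-1},\qquad (g_1,\ldots,g_n)\longmapsto (g_2-g_1,\,g_3-g_1,\,\ldots,\,g_n-g_1).
\end{equation*}
A direct verification shows $\sigma$ is surjective (lift $(h_1,\ldots,h_{n-1})$ to $(0,h_1,\ldots,h_{n-1})$) and that $\ker\sigma=\Delta$. Combining with the previous paragraph yields the desired isomorphism.

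There is no real obstacle here; the only thing to be careful about is the bookkeeping, namely that under $\Psi$ the diagonal image of $\phi$ really lands in the $G^n$-factor with zero $B_i$-components (which is immediate since each $\phi_i(g)$ has no $B_i$-part in the decomposition $A_i=B_i\oplus\phi_i(G)$). All the groups involved are abelian, so writing the quotient additively and using direct sums throughout is harmless.
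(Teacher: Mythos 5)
Your proof is correct and follows essentially the same route as the paper: both use the splittings $A_i=B_i\oplus\phi_i(G)$ to project onto the $B_i$-parts and onto $G$-coordinates, and both kill the diagonal by taking differences of those coordinates. Your reorganization into the isomorphism $\Psi$ followed by the computation $G^n/\Delta\simeq G^{n-1}$ is just a cleaner packaging of the single map $\psi=(\psi_1,\psi_2)$ that the paper writes down and checks directly.
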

\begin{proof}
For each $i$, let $\pi_i:A_i\longrightarrow B_i$ and be the canonical quotient, and let $\theta_i:A_i\longrightarrow G_i$ be the left inverse of $\phi_i$. Consider the following two maps:
\begin{equation*}
\begin{aligned}
\psi_1\colon A_1\oplus\cdots\oplus A_n &\longrightarrow B_1\oplus\cdots\oplus B_n,\\
(a_1,\ldots,a_n) & \longmapsto (\pi_1(a_1),\ldots,\pi_n(a_n)),
\end{aligned}
\end{equation*}
and
\begin{equation*}
\begin{aligned}
\psi_2\colon A_1\oplus\cdots\oplus A_n &\longrightarrow G^{n-1},\\
(a_1,\ldots,a_n) & \longmapsto (\theta_1(a_1)-\theta_n(a_n),\theta_2(a_1)-\theta_n(a_n),\ldots,\theta_{n-1}(a_{n-1})-\theta_n(a_n)).
\end{aligned}
\end{equation*}
Let $\psi:A_1\oplus\cdots\oplus A_n\longrightarrow B_1\oplus\cdots\oplus B_n\oplus G^{n-1}$ be the compositum of $\psi_1$ and $\psi_2$. We claim that $\psi$ is surjective with kernel $\phi(G)$.

Indeed, if $(a_1,\ldots,a_n)\in\ker\psi$ then $a_i\in\phi_i(G)$, and thus $a_i=\phi_i(b_i)$ for some $b_i\in G$; by definition, $\theta_i(a_i)=(\theta_i\circ\phi_i)(g_i)=g_i$, and thus the condition $\theta_i(a_i)=\theta_n(a_n)$ implies $b_i=b_n$ for each $n$, that is, all the $g_i$ are equal to the same $g$. Therefore, $a_i=\phi_i(g)$, and $(a_1,\ldots,a_n)=\phi(g)\in\phi(G)$. The inclusion $\phi(G)\subseteq\ker\psi$ follows trivially.

Let now $(b_1,\ldots,b_n,g_1,\ldots,g_{n-1})\in B_1\oplus\cdots\oplus B_n\oplus G^{n-1}$, and set $a_i=(b_i,g_i)$ for $i<n$ and $a_n=(b_n,0)$. Then, $b_i=\pi_i(a_i)$ for each $i$, while $\theta_i(a_i)-\theta_n(a_n)=g_i$; therefore, $\psi(a_1,\ldots,a_n)=(b_1,\ldots,b_n,g_1,\ldots,g_{n-1})$. Thus $\psi$ is surjective, and the claim is proved.
\end{proof}

\begin{prop}\label{prop:noeth-rad-ovDnonloc}
Let $(D,\mm)$ be a local Noetherian domain of dimension $1$, and let $\overline{D}$ be its integral closure; suppose that $\overline{D}$ is not local. Let $\mathfrak{c}:=(D:\overline{D})$; suppose that $\mathfrak{c}\neq(0)$ is a radical ideal of $\overline{D}$. Let $k$ be the residue field of $D$ and $L_1,\ldots,L_n$ be the residue field of $\overline{D}$. Then, $\Inv(D)$ is free if and only if, for each $i$, $\unit(L_i)$ is free and the natural map $\phi_i:\unit(k)\longrightarrow\unit(L_i)$ induced by the inclusion $k\hookrightarrow L_i$ makes $\phi_i(\unit(k))$ a direct summand of $\unit(L_i)$.
\end{prop}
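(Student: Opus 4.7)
The plan is to identify $\Inv(D)$ with a concrete abelian-group quotient and then analyze when that quotient is free via Lemma \ref{lemma:amalgamato}. By Corollary \ref{cor:noethloc-quozunit}, freeness of $\Inv(D)$ is equivalent to freeness of $\unit(\overline{D})/\unit(D)$, so I would first aim to compute the latter explicitly under the radicality hypothesis on $\mathfrak{c}$.

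The key preliminary observation is that $\mathfrak{c}=\mm$. Indeed, $\mathfrak{c}\subseteq\mm$ since the conductor is a proper ideal of the local ring $D$. On the other hand, $\overline{D}$ is a semilocal Dedekind domain (Krull-Akizuki), and under the radicality hypothesis $\mathfrak{c}$ is a finite intersection of the maximal ideals containing it; since $\mathfrak{c}\subseteq\mm\subseteq\mm_i$ for every $i$ (the latter inclusion because $\mm_i\cap D=\mm$), one gets $\mathfrak{c}=\bigcap_i\mm_i$, and hence $\mm\subseteq\mathfrak{c}$. Then Lemma \ref{lemma:ext->surj}, applied with $I=\mathfrak{c}=\mm$ (which lies in the Jacobson radical of both $D$ and $\overline{D}$), together with $\overline{D}/\mathfrak{c}\simeq L_1\times\cdots\times L_n$ by the Chinese Remainder Theorem, yields
\[
\frac{\unit(\overline{D})}{\unit(D)}\simeq\frac{\unit(L_1)\oplus\cdots\oplus\unit(L_n)}{\phi(\unit(k))},
\]
where $\phi$ is the diagonal embedding determined by the $\phi_i$.

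For the sufficiency direction, assume each $\unit(L_i)$ is free and $\unit(L_i)=B_i\oplus\phi_i(\unit(k))$. Lemma \ref{lemma:amalgamato} then gives that the quotient above is isomorphic to $B_1\oplus\cdots\oplus B_n\oplus\unit(k)^{n-1}$; each $B_i$ is free as a subgroup of the free abelian group $\unit(L_i)$, and $\unit(k)\simeq\phi_i(\unit(k))$ is free as a direct summand of a free group. Hence the direct sum is free.

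For the converse, assume the quotient is free. For each index $i$, the subgroup $\bigoplus_{j\neq i}\unit(L_j)$ of $\bigoplus_j\unit(L_j)$ meets $\phi(\unit(k))$ trivially (injectivity of $\phi_i$ forces any $\phi(g)$ with zero $i$-th coordinate to vanish), so it injects into the free quotient and is therefore free; consequently each $\unit(L_j)$, being a direct summand, is free, and running $i$ over all indices gives freeness for every $\unit(L_i)$. The direct-summand property of $\phi_i(\unit(k))\subseteq\unit(L_i)$ should be recovered by splitting the ambient sequence $0\to\unit(k)\to\bigoplus_i\unit(L_i)\to F\to 0$ (split because $F$ is free) and then extracting suitable coordinate-wise retractions. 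I expect this last step to be the main obstacle: a general retraction of the diagonal embedding does not automatically descend to retractions of the individual $\phi_i$, so extra structural input—perhaps an induction on $n$ combined with the explicit decomposition supplied by Lemma \ref{lemma:amalgamato}—will likely be required to extract the required splittings.
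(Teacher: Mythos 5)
Your reduction is correct and matches the paper's: by Corollary \ref{cor:noethloc-quozunit} and Lemma \ref{lemma:ext->surj} everything comes down to the freeness of $\unit(\overline{D}/\mathfrak{c})/\unit(D/\mathfrak{c})$, and your observation that radicality forces $\mathfrak{c}=\bigcap_i\mm_i=\mm$ (so that $D/\mathfrak{c}=k$ and $\overline{D}/\mathfrak{c}\simeq L_1\times\cdots\times L_n$) is a detail the paper uses implicitly but does not spell out. The sufficiency direction via Lemma \ref{lemma:amalgamato} is exactly the paper's argument. In the converse, your derivation of the freeness of each $\unit(L_i)$ from the trivial intersection of $\bigoplus_{j\neq i}\unit(L_j)$ with the diagonal is correct and in fact more direct than the paper, which obtains this only at the very end from the decomposition of Lemma \ref{lemma:amalgamato}.

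The genuine gap is the one you flag yourself: you never prove that $\phi_i(\unit(k))$ is a direct summand of $\unit(L_i)$, and the route you sketch (split the ambient sequence and try to extract coordinatewise retractions) cannot work as stated. A retraction $r$ of the diagonal $\phi=(\phi_1,\ldots,\phi_n)$ only gives $\sum_i r_i\circ\phi_i=\mathrm{id}$, which does not produce a retraction of any single $\phi_i$; concretely, for the diagonal map $\insZ\longrightarrow\insZ^2$, $x\mapsto(2x,3x)$, the cokernel is free (isomorphic to $\insZ$) yet neither coordinate image $2\insZ$ nor $3\insZ$ is a direct summand of $\insZ$. So the splitting is the actual mathematical content of the converse and cannot be deferred. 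The paper closes this step differently: it does not try to split the diagonal at all, but instead exhibits a subgroup $H_1$ of $\unit(B)$ with $H_1\cap\unit(A)=\phi_1(\unit(k))$ and $H_1/(H_1\cap\unit(A))\simeq\unit(L_1)/\phi_1(\unit(k))$, so that $\unit(L_1)/\phi_1(\unit(k))$ embeds into the free group $\unit(B)/\unit(A)$ and is therefore free; the exact sequence $0\to\unit(k)\to\unit(L_1)\to\unit(L_1)/\phi_1(\unit(k))\to0$ then splits because its cokernel is free (a free quotient makes any extension split), which is precisely the direct-summand statement. Any completion of your argument has to produce such an embedding of the quotient $\unit(L_1)/\phi_1(\unit(k))$ (not merely of $\unit(L_1)$) into $\unit(B)/\unit(A)$, and in view of the $\insZ^2$ example above this must genuinely use the structure of the diagonal copy of $\unit(k)$ rather than abstract projectivity; as written, your proposal does not supply it.
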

\begin{proof}
By Lemma \ref{lemma:ext->surj} and Corollary \ref{cor:noeth-quozunit}, we need to show that $\unit(B)/\unit(A)$ is free, where $A:=D/\mathfrak{c}$ and $B:=\overline{D}/\mathfrak{c}$. By hypothesis, $B\simeq L_1\oplus\cdots\oplus L_n$, and thus $\unit(B)\simeq\unit(L_1)\oplus\cdots\oplus\unit(L_n)$; on the other hand, $\unit(A)\simeq\unit(k)$, and the inclusion $\unit(A)\hookrightarrow\unit(B)$ is the diagonal embedding of $\unit(k)$ into the product. Note that $n>1$ since $\overline{D}$ is not local.

Suppose first that the conditions in the statement hold. Then, we are in the setting of Lemma \ref{lemma:amalgamato}: $\unit(L_i)\simeq B_i\oplus\phi_i(\unit(k))$ and thus $\unit(B)/\unit(A)$ is isomorphic to $B_1\oplus\cdots\oplus B_n\oplus\unit(k)^{n-1}$. Since each $\unit(L_i)$ is free, so are the $B_i$ and $\unit(k)$ (as they are isomorphic to subgroups of a free group), and thus $\unit(B)/\unit(A)$ is free, as claimed.

Conversely, suppose that $\unit(B)/\unit(A)$ is free. Consider the subgroup $H_1:=\unit(L_1)\oplus(1,\ldots,1)$: then, $\unit(A)\cap H_1=\phi_1(\unit(k))$, and thus $H_1/(\unit(A)\cap H_1)\simeq\unit(L_1)/\phi_i(\unit(k))$. Since $H_1/(\unit(A)\cap H_1)$ is a subgroup of $\unit(B)/\unit(A)$, it must be free; hence, the exact sequence
\begin{equation*}
0\longrightarrow\unit(k)\longrightarrow\unit(L_1)\longrightarrow\unit(L_1)/\phi_1(\unit(k))\longrightarrow 0
\end{equation*}
splits and so $\unit(L_1)\simeq B_1\oplus\phi_1(\unit(k))$ for some free subgroup $B_1$. The same holds for each $\unit(L_i)$. An application of Lemma \ref{lemma:amalgamato} now implies that $\unit(B)/\unit(A)$ is isomorphic to $B_1\oplus\cdots\oplus B_n\oplus\unit(k)^{n-1}$; therefore, also $\unit(k)$ must be free, and so each $\unit(L_1)$ is free, being the direct sum of two free groups. Therefore, the conditions of the statement are fulfilled.
\end{proof}

In the proposition above, the condition that the unit group of a field is free is very rare.
\begin{lemma}\label{lemma:unit-artin}
Let $K$ be a field. If $\unit(K)$ is free, then $A$ is has characteristic $2$ and $\ins{F}_2$ is algebraically closed in $K$.
\end{lemma}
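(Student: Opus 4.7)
The plan rests on the single fact that a free abelian group is torsionfree, so the proof reduces to exhibiting nontrivial torsion in $\unit(K)$ whenever either conclusion fails.

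First I would handle the characteristic: if $\mathrm{char}(K)\neq 2$, then $-1\neq 1$ in $K$, and $-1\in\unit(K)$ satisfies $(-1)^2=1$, giving a torsion element of order $2$. Since $\unit(K)$ is free and hence torsionfree, this is impossible, so $\mathrm{char}(K)=2$.

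Next I would show that $\ins{F}_2$ is algebraically closed in $K$ by contraposition. Suppose there exists $a\in K$ that is algebraic over $\ins{F}_2$ but not contained in $\ins{F}_2$. Then the subfield $\ins{F}_2(a)\subseteq K$ is a finite extension of $\ins{F}_2$, hence isomorphic to $\ins{F}_{2^n}$ for some $n\geq 2$. Its unit group $\unit(\ins{F}_{2^n})$ is cyclic of order $2^n-1\geq 3$, which is a nontrivial finite subgroup of $\unit(K)$. Again this contradicts the torsionfreeness of $\unit(K)$, completing the proof.

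There is essentially no obstacle here: the argument is a direct torsion computation, and each case produces a concrete torsion element ($-1$ in the first case, a generator of $\unit(\ins{F}_{2^n})$ in the second). The only thing to be careful about is remembering that $\ins{F}_2(a)$ is automatically a finite field once $a$ is algebraic over a finite prime field, so its multiplicative group really is finite and nontrivial.
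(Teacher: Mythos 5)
Your proof is correct and follows essentially the same route as the paper: the characteristic condition comes from the order-two element $-1$, and the algebraic closedness of $\ins{F}_2$ comes from the fact that any element algebraic over $\ins{F}_2$ lies in a finite field and is therefore torsion, contradicting torsionfreeness of a free group. The only difference is that you spell out the finite subfield $\ins{F}_{2^n}$ explicitly, which the paper leaves implicit.
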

\begin{proof}
If the characteristic of $K$ is not $2$ then $\unit(K)$ contains the torsion element $-1\neq 1$, and thus it is not free. If $K$ has characteristic $2$, then any $x\in K$ that is algebraic over $\ins{F}_2$ is torsion; thus if $\unit(K)$ is free then $\ins{F}_2$ must be algebraically closed in $K$.
\end{proof}

\begin{cor}
Let $(D,\mm)$ be a local Noetherian domain of dimension $1$, and let $\overline{D}$ be its integral closure; suppose that $\overline{D}$ is not local. Let $\mathfrak{c}:=(D:\overline{D})$; suppose that $\mathfrak{c}\neq(0)$ is a radical ideal of $\overline{D}$. If $\Inv(D)$ is free, then the characteristic of $D/\mm$ is $2$.
\end{cor}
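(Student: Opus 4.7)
The plan is to chain together the previous proposition and lemma essentially directly. By Proposition \ref{prop:noeth-rad-ovDnonloc}, the freeness of $\Inv(D)$ forces $\unit(L_i)$ to be a free abelian group for every residue field $L_i$ of $\overline{D}$ (and also forces $\phi_i(\unit(k))$ to be a direct summand of $\unit(L_i)$, but we will not need this finer information here).

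Once we know $\unit(L_i)$ is free, Lemma \ref{lemma:unit-artin} applied to the field $L_i$ yields immediately that $\mathrm{char}(L_i)=2$. Since $k=D/\mm$ embeds into $L_i$ via the natural inclusion (each $L_i$ is the residue field of a maximal ideal of $\overline{D}$ lying over $\mm$, and hence is an extension of $k$), the characteristic of $k$ equals the characteristic of $L_i$, which is $2$. This is the desired conclusion.

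Alternatively, and perhaps more in the spirit of the previous proof, one can observe that the proof of Proposition \ref{prop:noeth-rad-ovDnonloc} explicitly shows $\unit(k)$ must be free (because $n>1$ and $\unit(B)/\unit(A) \simeq B_1\oplus\cdots\oplus B_n\oplus \unit(k)^{n-1}$ is free, forcing the summand $\unit(k)$ to be free), so Lemma \ref{lemma:unit-artin} may be applied directly to $k$. There is no real obstacle: the argument is a one-line deduction from the two cited results, and the only thing to check is that invoking Proposition \ref{prop:noeth-rad-ovDnonloc} is legitimate, which follows since all of its hypotheses are precisely the standing assumptions of the corollary.
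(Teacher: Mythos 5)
Your proposal is correct and follows essentially the same route as the paper: invoke Proposition \ref{prop:noeth-rad-ovDnonloc} to get that each $\unit(L_i)$ is free, then apply Lemma \ref{lemma:unit-artin}. The paper applies the lemma to $k$ after noting $\unit(k)$ is free as a subgroup of a free group, while your main variant applies it to $L_i$ and transfers the characteristic down to $k$ — an immaterial difference, and your stated alternative matches the paper exactly.
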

\begin{proof}
If $\Inv(D)$ is free, by Proposition \ref{prop:noeth-rad-ovDnonloc} the multiplicative group of the residue fields of $\overline{D}$ must be free, and thus also $\unit(D/\mm)$ must be free. By Lemma \ref{lemma:unit-artin}, it follows that $D/\mm$ has characteristic $2$.
\end{proof}

We collect the previous results in the following theorem.
\begin{teor}\label{teor:noeth}
Let $(D,\mm)$ be a local Noetherian domain of dimension $1$, and suppose that $D$ is analytically unramified and not integrally closed. Let $\overline{D}$ be its integral closure and $\mathfrak{c}:=(D:\overline{D})$. Suppose that $\mathfrak{c}\neq(0)$. Let $k$ be the residue field of $D$ and $L_1,\ldots,L_n$ be the residue fields of $\overline{D}$, and consider $k$ as a subfield of each $L_i$. Then, the following hold.
\begin{enumerate}[(a)]
\item If $\mathfrak{c}$ is not radical in $\overline{D}$, then $\Inv(D)$ is not free.
\item If $\overline{D}$ is local and $\mathfrak{c}$ is radical in $\overline{D}$, then $\Inv(D)$ is free if and only if $\unit(L_1)/\unit(k)$ is free.
\item If $\overline{D}$ is not local and $\mathfrak{c}$ is radical in $\overline{D}$, then $\Inv(D)$ is free if and only if, for every $i$, $\unit(L_i)$ is free and $\unit(k)$ is a direct summand of $\unit(L_i)$.
\end{enumerate}
\end{teor}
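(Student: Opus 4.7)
The plan is to observe that this theorem is just a packaging of the three propositions already proved in the section, one for each case of the dichotomy ``$\mathfrak{c}$ radical vs.\ not radical'' and, within the radical case, ``$\overline{D}$ local vs.\ not local''. So I would dispatch each clause by direct appeal to the corresponding result, with only minimal bookkeeping to align hypotheses and notation.

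First, part (a) is exactly the statement of Proposition \ref{prop:noeth-norad}: the hypotheses ($D$ local Noetherian of dimension one, $D\neq\overline{D}$, $\mathfrak{c}\neq(0)$, and $\mathfrak{c}$ not radical in $\overline{D}$) match verbatim, so no further work is needed. Part (c) is likewise Proposition \ref{prop:noeth-rad-ovDnonloc}: the criterion ``$\phi_i(\unit(k))$ is a direct summand of $\unit(L_i)$'' is the same as ``$\unit(k)$ is a direct summand of $\unit(L_i)$ via the natural map'', and $\unit(L_i)$ being free together with this splitting forces $\unit(k)$ to be free as well.

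For part (b) there is one small bridge to build, because Proposition \ref{prop:noeth-rad-ovDloc} is stated under the stronger-looking assumption that $\mathfrak{c}$ \emph{is} the unique maximal ideal of $\overline{D}$, whereas here only ``$\overline{D}$ local and $\mathfrak{c}$ radical'' is assumed. I would dispatch this as follows: since $\overline{D}$ is the integral closure of a one-dimensional local Noetherian domain, Krull--Akizuki makes $\overline{D}$ one-dimensional, so its unique nonzero prime is its maximal ideal $\mm_1$; a nonzero radical ideal is the intersection of the primes containing it, hence equals $\mm_1$. With this identification, $\overline{D}/\mathfrak{c}=L_1$ and $D/\mm=k$, and Proposition \ref{prop:noeth-rad-ovDloc} yields the stated criterion.

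The ``main obstacle'' is really only the verification $\mathfrak{c}=\mm_1$ in case (b); everything else is the trivial combination of Propositions \ref{prop:noeth-norad}, \ref{prop:noeth-rad-ovDloc} and \ref{prop:noeth-rad-ovDnonloc}, with the analytic unramification hypothesis serving simply to guarantee $\mathfrak{c}\neq(0)$.
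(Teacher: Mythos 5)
Your proposal is correct and takes essentially the same route as the paper, whose entire proof of Theorem \ref{teor:noeth} is the one-line instruction to combine Propositions \ref{prop:noeth-norad}, \ref{prop:noeth-rad-ovDloc} and \ref{prop:noeth-rad-ovDnonloc}. Your bridging argument for part (b) --- that $\overline{D}$ is a one-dimensional local Krull (hence discrete valuation) domain by Krull--Akizuki, so a nonzero radical ideal of $\overline{D}$ must equal its maximal ideal, which reconciles the hypothesis ``$\mathfrak{c}$ radical'' with the hypothesis ``$\mathfrak{c}=\mm_1$'' of Proposition \ref{prop:noeth-rad-ovDloc} --- is a correct and worthwhile detail that the paper leaves implicit.
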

\begin{proof}
Put together Proposition \ref{prop:noeth-norad}, Proposition \ref{prop:noeth-rad-ovDloc} and Proposition \ref{prop:noeth-rad-ovDnonloc}.
\end{proof}

\begin{oss}
~\begin{enumerate}
\item In Proposition \ref{prop:noeth-rad-ovDloc}, it is possible for $\unit(\overline{D})/\unit(D)$ to be free even if $\unit(D)\neq\unit(\overline{D})$. For example, let $K\subseteq L$ be a field extension, and let $D:=K+XL[X]_{(X)}$; i.e., $D$ is the pullback of $K$ inside $\overline{D}=L[X]_{(X)}$. Then, $\unit(D)=\unit(K)+XL[X]_{(X)}$ while $\unit(\overline{D})=\unit(L)+XL[X]_{(X)}$, and thus $\unit(\overline{D})/\unit(D)\simeq \unit(L)/\unit(K)$. Suppose now that $K=\insQ$ and $K\subseteq L$ is finite. By \cite[Proposition 1]{may-multgroup-fieldext} $\unit(L)/\unit(K)\simeq F\oplus A$, where $F$ is free and $A$ is finite. In particular, if $A$ is trivial then $\unit(\overline{D})/\unit(D)$ is free.

For an explicit example, let $\alpha:=\zeta_7+\zeta_7^{-1}$ (where $\zeta_7\neq 1$ and $\zeta_7^7=1$) and $L:=\insQ(\alpha)$: then, $L$ is a Galois extension of $\insQ$ of degree $3$, and if $A$ is not trivial there would be an $x\in L\setminus\insQ$ such that $x^k\in\insQ$ for some $k$. Thus $L$ should contain the Galois closure of $\insQ(\sqrt[3]{y})$, which however has degree $6$ over $\insQ$.

\item Corollary \ref{cor:noethloc-quozunit} does not hold when $D$ is not local. For example, suppose that $K$ be a field and let $D=K[X^2,X^3]$. Then, $\overline{D}=K[X]$ and thus $\unit(D)=K\setminus\{0\}=\unit(\overline{D})$. However, if $\Inv(D)$ is free then so is $\Inv(D_M)$, for every maximal ideal $M$ (by Proposition \ref{prop:jaffard}, since $D$ is a locally finite domain): choosing $M=(X^2,X^3)$, we obtain that $\overline{D_M}=K[X]_{(X)}$ and $(D_M:\overline{D_M})=X^2\overline{D_M}$ is not radical, and thus $\Inv(D_M)$ is not free by Proposition \ref{prop:noeth-norad}. 

\item If $K$ is a field of characteristic $2$, then it is possible for $\unit(K)$ to be free even if $K\neq\ins{F}_2$. For example, if $\XX$ is a set of indeterminates and $K=\ins{F}_2(\XX)$, then $\unit(K)$ is isomorphic to $\insprinc(K[\XX])$, which is free since $K[\XX]$ is a unique factorization domain.

\item Likewise, the fact that $\unit(K)$ and $\unit(K')$ are free (where $K\subseteq K'$ are fields) does not guarantee that $\unit(K)$ is a direct summand of $\unit(K')$. Indeed, suppose that $X$ is an indeterminate over $\ins{F}_2$, and let $K=\ins{F}_2(X^2)$ and $K'=\ins{F}_2(X)$. Then, $\unit(K)\simeq\unit(K')$ are free, but $X+\unit(K')$ has degree $2$ in the quotient, so that $\unit(K')/\unit(K)$ is not torsionfree and, henceforth, not free.
\end{enumerate}
\end{oss}

\section{Beyond local finiteness}\label{sect:preJaff}
In this section, we generalize Proposition \ref{prop:jaffard} from Jaffard to pre-Jaffard families. Recall that a \emph{pre-Jaffard family} on $D$ is a set $\Theta$ of flat overrings of $D$ that is complete, independent and compact, with respect to the Zariski topology of $\Over(D)$, and such that $K\notin\Theta$.

For every ordinal number $\alpha$, we associate to $\Theta$ a subset $\njaff^\alpha(\Theta)$ and an overring $T_\alpha$ of $D$ in the following way:
\begin{itemize}
\item $\njaff^0(\Theta):=\Theta$, $T_0:=D$;
\item if $\alpha=\gamma+1$ is a successor ordinal, then $\njaff^\alpha(\Theta)$ is the set of members of $\njaff^\gamma(\Theta)$ that are not Jaffard overrings of $T_\gamma$;
\item if $\alpha$ is a limit ordinal, then $\njaff^\alpha(\Theta):=\bigcap\{\njaff^\gamma(\Theta)\mid \gamma<\alpha\}$;
\item $T_\alpha:=\bigcap\{T\mid T\in\njaff^\alpha(\Theta)\}$.
\end{itemize}
Then, $\{\njaff^\alpha(\Theta)\}_\alpha$ is a decreasing sequence of subsets of $\Theta$ and $\{T_\alpha\}_\alpha$ is an increasing sequence of overrings of $D$; moreover, $\njaff^\alpha(\Theta)$ is always a pre-Jaffard family of $T_\alpha$. If $T_\alpha=K$ for some $\alpha$, we say that $\Theta$ is \emph{sharp}. See \cite{jaff-derived} for properties of pre-Jaffard families and of this sequence.

\begin{lemma}\label{lemma:explostion-prejaff}
Let $D$ be an integral domain, $\Theta$ a pre-Jaffard family of $D$, and let $I\neq D$ be a finitely generated fractional ideal of $D$ such that $IT_\alpha=T_\alpha$ for some ordinal number $\alpha$. Then, there is an ordinal $\beta$ such that $IT_\beta\neq T_\beta$ and $IT_{\beta+1}=T_{\beta+1}$.
\end{lemma}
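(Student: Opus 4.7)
The plan is to take $\gamma$ to be the least ordinal with $IT_\gamma = T_\gamma$, which exists by the hypothesis and the well-ordering of ordinals, and satisfies $0 < \gamma \le \alpha$ (since $IT_0 = I \ne D = T_0$). If I can show that $\gamma$ is necessarily a successor ordinal, then writing $\gamma = \beta + 1$ the minimality of $\gamma$ forces $IT_\beta \ne T_\beta$ while $IT_{\beta+1} = T_{\beta+1}$, which is exactly what is required.

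To rule out the limit case, suppose $\gamma$ were a limit ordinal. The core step is to establish the identity
\begin{equation*}
T_\gamma = \bigcup_{\delta < \gamma} T_\delta.
\end{equation*}
The inclusion $\supseteq$ is immediate from $T_\delta \subseteq T_\gamma$. For the reverse inclusion, pick $x \in T_\gamma$ and consider the Zariski-closed subset $W_x := \{T \in \Theta : x \notin T\}$ of $\Theta$. The hypothesis $x \in T_\gamma$ reads $W_x \cap \njaff^\gamma(\Theta) = \emptyset$, and unpacking $\njaff^\gamma(\Theta) = \bigcap_{\delta < \gamma} \njaff^\delta(\Theta)$ yields $\bigcap_{\delta < \gamma}(W_x \cap \njaff^\delta(\Theta)) = \emptyset$. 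Granting the standard fact (from \cite{jaff-derived}) that each $\njaff^\delta(\Theta)$ is closed in the compact space $\Theta$, this is a nested family of closed subsets of a compact space whose intersection is empty, so some single $W_x \cap \njaff^{\delta_0}(\Theta)$ must already be empty. That means $x \in T_{\delta_0}$ for some $\delta_0 < \gamma$, giving the desired inclusion.

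Once this identity is in place, the contradiction is direct. Writing $I = (a_1, \ldots, a_n)$, the equality $IT_\gamma = T_\gamma$ produces $s_1, \ldots, s_n \in T_\gamma$ with $a_1 s_1 + \cdots + a_n s_n = 1$; each $s_i$ lies in some $T_{\delta_i}$ with $\delta_i < \gamma$, and because $\gamma$ is a limit we may set $\delta := \max_i \delta_i < \gamma$, so that all $s_i$ lie in $T_\delta$ and therefore $1 \in IT_\delta$. This gives $IT_\delta = T_\delta$, contradicting the minimality of $\gamma$.

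The main obstacle is the topological input that each $\njaff^\delta(\Theta)$ is Zariski-closed in $\Theta$; this is an iterated Cantor--Bendixson-style property of the derived sequence and is where the argument genuinely relies on the structural results for pre-Jaffard families developed in \cite{jaff-derived}. The rest of the proof is a routine combination of this topological fact with the finite-generation hypothesis on $I$, which is exactly what lets us collapse finitely many ordinals $\delta_i$ into a single $\delta$ strictly below the limit $\gamma$.
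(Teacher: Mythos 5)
Your argument is essentially the paper's: take the least ordinal $\gamma$ with $IT_\gamma=T_\gamma$, note $\gamma>0$ since $IT_0=I\neq D$, and rule out the limit case by combining the identity $T_\gamma=\bigcup_{\delta<\gamma}T_\delta$ with the finite generation of $I$ to push the equation $1=a_1s_1+\cdots+a_ns_n$ down to a single $T_\delta$ with $\delta<\gamma$. The only difference is that the paper simply cites \cite[Lemma 7.1]{PicInt} for that union identity, whereas you sketch a compactness proof of it; your sketch is reasonable in outline, but the assertion that each $\njaff^\delta(\Theta)$ is Zariski-closed in the compact space $\Theta$ is precisely the nontrivial topological input you would have to extract from the references (the derived-sequence results there are phrased in terms of the inverse and constructible topologies, so one must check in which topology the $\njaff^\delta(\Theta)$ are closed and in which $\Theta$ is compact), and that is exactly what the cited lemma packages.
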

\begin{proof}
Let $\Gamma$ be the set of all ordinal numbers $\lambda$ such that $IT_\lambda=T_\lambda$; then, $\alpha\in\Gamma$, and thus $\Gamma$ has a minimum $\gamma$. If $\gamma=\beta+1$ is a successor ordinal, then $\beta$ is the ordinal we were looking for; we claim that $\gamma$ cannot be a limit ordinal.

Indeed, let $I=(x_1,\ldots,x_m)$. Since $IT_\gamma=T_\gamma$, we have $x_1,\ldots,x_m\in T_\gamma$, and there are $t_1,\ldots,t_m\in T_\gamma$ such that $1=x_1t_1+\cdots+x_mt_m$. If $\gamma$ is a limit ordinal, then by \cite[Lemma 7.1]{PicInt} we have $T_\gamma=\bigcup_{\lambda<\gamma}T_\lambda$, and thus there is a $\overline{\lambda}<\gamma$ such that $x_1,\ldots,x_m,t_1,\ldots,t_m\in T_{\overline{\lambda}}$. However, this implies that $IT_{\overline{\lambda}}=T_{\overline{\lambda}}$, against the definition of $\gamma$; hence $\gamma$ cannot be a limit ordinal, as claimed.
\end{proof}

The proof of the following proposition follows the ideas laid out in \cite[Section 5]{SP-scattered} and \cite[Section 7]{PicInt}.
\begin{prop}\label{prop:preJaff}
Let $D$ be an integral domain, $\Theta$ a pre-Jaffard family on $D$, and let $\{T_\alpha\}_\alpha$ be the derived sequence. If $\Inv(T)$ is free for every $T\notin\njaff^\alpha(\Theta)$, then there is an exact sequence
\begin{equation}\label{eq:exseq-Talpha}
0\longrightarrow\bigoplus_{T\in\Theta\setminus\njaff^\alpha(\Theta)}\Inv(T)\longrightarrow\Inv(D)\longrightarrow\Inv(T_\alpha) \longrightarrow 0.
\end{equation}
In particular, if $\Theta$ is sharp with Jaffard degree at most $\alpha$, then $\displaystyle{\Inv(D)\simeq\bigoplus_{T\in\Theta}\Inv(T)}$, and $\Inv(D)$ is free.
\end{prop}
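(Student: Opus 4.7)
The plan is to establish the exact sequence by transfinite induction on $\alpha$; the ``in particular'' clause will then follow by specialization. The base case $\alpha=0$ is trivial since $\Theta\setminus\njaff^0(\Theta)=\emptyset$ and $T_0=D$, so the asserted sequence collapses to $0\to 0\to\Inv(D)\to\Inv(D)\to 0$. Throughout, the map $\Inv(D)\to\Inv(T_\alpha)$ in the exact sequence is the extension $\phi_\alpha:I\mapsto IT_\alpha$, and the identification of the kernel with the direct sum is carried along as part of the inductive bookkeeping.

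For the successor step $\alpha=\beta+1$, assume the exact sequence
\begin{equation*}
0\to\bigoplus_{T\in\Theta\setminus\njaff^\beta(\Theta)}\Inv(T)\to\Inv(D)\xrightarrow{\phi_\beta}\Inv(T_\beta)\to 0.
\end{equation*}
Let $\mathcal{J}_\beta:=\njaff^\beta(\Theta)\setminus\njaff^{\beta+1}(\Theta)$, which by the definition of the derived sequence consists exactly of those members of $\njaff^\beta(\Theta)$ that are Jaffard overrings of $T_\beta$. A structural fact from the theory of pre-Jaffard families in \cite{jaff-derived} is that $\mathcal{J}_\beta\cup\{T_{\beta+1}\}$ is a Jaffard family of $T_\beta$; applying Proposition \ref{prop:jaffard} to it produces the split exact sequence
\begin{equation*}
0\to\bigoplus_{T\in\mathcal{J}_\beta}\Inv(T)\to\Inv(T_\beta)\xrightarrow{\pi}\Inv(T_{\beta+1})\to 0.
\end{equation*}
Since $\phi_\beta$ is surjective, restricting it to $\phi_\beta^{-1}(\ker\pi)$ produces the short exact sequence
\begin{equation*}
0\to\bigoplus_{T\in\Theta\setminus\njaff^\beta(\Theta)}\Inv(T)\to\ker(\pi\circ\phi_\beta)\to\bigoplus_{T\in\mathcal{J}_\beta}\Inv(T)\to 0.
\end{equation*}
Under the hypothesis, $\Inv(T)$ is free for every $T\in\mathcal{J}_\beta\subseteq\Theta\setminus\njaff^\alpha(\Theta)$, so the rightmost group is free, hence projective; the sequence splits and identifies $\ker(\pi\circ\phi_\beta)=\ker\phi_{\beta+1}$ with $\bigoplus_{T\in\Theta\setminus\njaff^{\beta+1}(\Theta)}\Inv(T)$, giving the desired sequence at step $\beta+1$.

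For a limit ordinal $\alpha$, I pass to the direct limit of the inductive exact sequences over $\beta<\alpha$, which share the middle term $\Inv(D)$. The kernels form a directed union whose colimit is $\bigoplus_{T\in\Theta\setminus\njaff^\alpha(\Theta)}\Inv(T)$, while the identity $\varinjlim_\beta\Inv(T_\beta)=\Inv(T_\alpha)$ follows from $T_\alpha=\bigcup_{\beta<\alpha}T_\beta$ (Lemma 7.1 of \cite{PicInt}) by moving finite generating sets of $J$ and $J^{-1}$, for any $J\in\Inv(T_\alpha)$, into some $T_\beta$. Exactness of the limit sequence at the middle term is where Lemma \ref{lemma:explostion-prejaff} enters: if $I\in\Inv(D)$ satisfies $IT_\alpha=T_\alpha$, then writing $1=\sum a_ix_i$ with $x_i$ generators of $I$ and $a_i\in T_\alpha$ places the $a_i$ in some $T_\beta$, so $IT_\beta=T_\beta$ already, and hence $I$ lies in the kernel at stage $\beta$.

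The main obstacle is the structural claim invoked at the successor step, namely that $\mathcal{J}_\beta\cup\{T_{\beta+1}\}$ is a Jaffard family of $T_\beta$; this is precisely where the pre-Jaffard machinery of \cite{jaff-derived} does the real work and where Proposition \ref{prop:jaffard} becomes applicable at each stage. Finally, the ``in particular'' clause is immediate: if $\Theta$ is sharp with Jaffard degree at most $\alpha$ then $T_\alpha=K$ and $\njaff^\alpha(\Theta)=\emptyset$, so $\Inv(T_\alpha)=0$, the freeness hypothesis covers every $T\in\Theta$, and the exact sequence collapses to $\Inv(D)\simeq\bigoplus_{T\in\Theta}\Inv(T)$, which is free as a direct sum of free groups.
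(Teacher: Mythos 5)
Your overall architecture (transfinite induction, splitting at successor steps via freeness, increasing union of kernels at limit steps, specialization for the ``in particular'' clause) matches the paper's proof, but the successor step rests on a claim that is false in general: that $\mathcal{J}_\beta\cup\{T_{\beta+1}\}$, where $\mathcal{J}_\beta:=\njaff^\beta(\Theta)\setminus\njaff^{\beta+1}(\Theta)$, is a Jaffard family of $T_\beta$. A Jaffard family must be locally finite, and this family typically is not; if it were, the whole derived-sequence machinery would be redundant, since one could conclude directly from Proposition \ref{prop:jaffard}. Concretely, take $D$ almost Dedekind with $\Max(D)^\inverse$ having a single limit point $M_0$ (a setting the paper itself considers in Section \ref{sect:preJaff}): then $\Theta=\{D_M\}_{M\in\Max(D)}$, $\njaff(\Theta)=\{D_{M_0}\}$, and $\mathcal{J}_0\cup\{T_1\}=\Theta$, which is locally finite only if $D$ has finite character --- but the presence of a limit point in $\Max(D)^\inverse$ forces the existence of a nonunit lying in infinitely many maximal ideals. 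So Proposition \ref{prop:jaffard} does not apply, and the split exact sequence $0\to\bigoplus_{T\in\mathcal{J}_\beta}\Inv(T)\to\Inv(T_\beta)\to\Inv(T_{\beta+1})\to 0$, which is exactly the content of the crucial one-step case, is left unproved.

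The paper fills this hole with three separate ingredients, none of which follows from Jaffard-family formalism: surjectivity of the extension map $\Inv(T_\beta)\to\Inv(T_{\beta+1})$ comes from \cite[Theorem 7.2(a)]{PicInt}; the containment of its kernel in the direct sum comes from \cite[Lemma 8.2]{locpic}, which says that an invertible ideal $I$ with $IT_{\beta+1}=T_{\beta+1}$ satisfies $IT\neq T$ for only finitely many $T$, all of them Jaffard overrings (a statement about these special ideals only, hence genuinely weaker than local finiteness of the family); and the reverse containment comes from the explicit construction $I_1\cap\cdots\cap I_n\cap D$, with inverse $J_1\cap\cdots\cap J_n\cap D$. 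You would also need to justify, at the limit stage, why the increasing union of kernels, each identified with a direct sum, is again a direct sum with compatible identifications (the paper invokes \cite[Lemma 5.6]{SP-scattered}); your use of Lemma \ref{lemma:explostion-prejaff} there is correct and matches the paper, but it cannot compensate for the missing one-step argument.
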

\begin{proof}
By \cite[Theorem 7.2(a)]{PicInt}, the extension map $\Inv(D)\longrightarrow\Inv(T_\alpha)$ is always surjective; we denote by $K_\alpha$ its kernel. We proceed by induction on $\alpha$.

If $\alpha=0$ the claim is obvious. If $\alpha=1$, the kernel of $\Inv(D)\longrightarrow\Inv(T_1)$ is the set of all ideals $I$ such that $IT_1=T_1$; by \cite[Lemma 8.2]{locpic} (and see \cite[Proposition 5.6]{jaff-derived}), for every such $I$ there are only finitely many $T\in\Theta$ (all of them out of $\njaff(\Theta)$) such that $IT\neq T$. Thus, the kernel $K_1$ is contained in the direct sum $H:=\bigoplus\{\Inv(T)\mid T\in\Theta\setminus\njaff(\Theta)\}$. If now $I_1,\ldots,I_n$ are invertible ideals of $A_1,\ldots,A_n$, respectively (for some $A_1,\ldots,A_n\in\Theta\setminus\njaff(\Theta)$), then $I_1\cap\cdots\cap I_n\cap D$ is an invertible ideal of $D$ with inverse $J_1\cap\cdots\cap J_n\cap D$ (where $J_i$ is the inverse of $I_i$ in $A_i$). Thus $H$ is contained in the kernel, and so $K_1$ has the required decomposition.

Suppose now that the claim holds for all $\beta<\alpha$. If $\alpha=\gamma+1$ is a successor ordinal, then the map $\Inv(D)\longrightarrow\Inv(T_\alpha)$ factors through $\Inv(T_\gamma)$; therefore, there is an exact sequence
\begin{equation}\label{eq:exseq-Kalpha}
0\longrightarrow K_\gamma\longrightarrow K_\alpha\longrightarrow K_{\alpha,\gamma}\longrightarrow 0,
\end{equation}
where $K_{\alpha,\gamma}$ is the kernel of $\Inv(T_\gamma)\longrightarrow\Inv(T_\alpha)$. By the case $\alpha=1$, $K_{\alpha,\gamma}$ is free and isomorphic to $\bigoplus\{\Inv(T)\mid T\in\njaff^{\gamma}(\Theta)\setminus\njaff^\alpha(\Theta)\}$; thus the exact sequence \eqref{eq:exseq-Kalpha} splits, and $K_\alpha\simeq K_\gamma\oplus K_{\alpha,\gamma}$. By induction, $K_\alpha$ is free and has the required decomposition.

Suppose that $\alpha$ is a limit ordinal. Then, the sequence $\{K_\beta\}_{\beta<\alpha}$ is an ascending chain of subgroups of $K_\alpha$, and by the previous reasoning $K_{\beta+1}\simeq K_\beta\oplus H_\beta$, where $H_\beta:=\bigoplus\{\Inv(T)\mid T\in\njaff^{\beta+1}(\Theta)\setminus\njaff^\beta(\Theta)\}$. Let $I\in K_\alpha$. Then, $IT_\alpha=T_\alpha$, and thus by Lemma \ref{lemma:explostion-prejaff} there is a $\beta<\alpha$ such that $IT_\beta\neq T_\beta$ and $IT_{\beta+1}=T_{\beta+1}$. Since $\gamma$ is a limit ordinal, $\beta+1<\gamma$ and $I\in K_{\beta+1}$. Thus $\bigcup_{\beta<\alpha} K_\beta=K_\alpha$, and by \cite[Lemma 5.6]{SP-scattered} (see also \cite[Chapter 3, Lemma 7.3]{fuchs-abeliangroups}) we have that $K_\alpha$ has the required decomposition. By induction, the sequence \eqref{eq:exseq-Talpha} is always exact.

For the last statement, the hypothesis imply that $\njaff^\alpha(\Theta)=\emptyset$ and $T_\alpha=K$, and thus \eqref{eq:exseq-Talpha} becomes
\begin{equation*}
0\longrightarrow\bigoplus_{T\in\Theta}\Inv(T)\longrightarrow\Inv(D)\longrightarrow 0 \longrightarrow 0.
\end{equation*}
The claim follows.
\end{proof}

In the context of one-dimensional domains, we obtain the following.
\begin{cor}
Let $D$ be a one-dimensional domain. If $\Max(D)^\inverse$ is scattered and $\Inv(D_M)$ is free for every $M\in\Max(D)$, then
\begin{equation*}
\Inv(D)\simeq\bigoplus\{\Inv(D_M)\mid M\in\Max(D)\}.
\end{equation*}
In particular, $\Inv(D)$ is free.
\end{cor}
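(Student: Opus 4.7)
The plan is to apply Proposition \ref{prop:preJaff} to the family $\Theta := \{D_M \mid M \in \Max(D)\}$, after verifying that $\Theta$ is a sharp pre-Jaffard family of $D$.

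For the first step, since $D$ is one-dimensional, the Preliminaries give for free that $\Theta$ is complete and independent; each $D_M$ is flat and distinct from $K$. The only clause in the definition of a pre-Jaffard family that requires real work is the compactness of $\Theta$ in the Zariski topology of $\Over(D)$. I would obtain this by transporting, via the bijection $M \leftrightarrow D_M$, the relevant topology from $\Max(D)$ (comparing the trace on $\Theta$ of the Zariski topology of $\Over(D)$ with the inverse topology on $\Max(D)$), and by appealing to the fact that $\Max(D)^\inverse$ is spectral; the scatteredness hypothesis then supplies enough structure to pass compactness over to $\Theta$.

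The next step is to identify the derived sequence of $\Max(D)^\inverse$ with the $\njaff$-sequence of $\Theta$. Under the one-dimensional hypothesis, a localization $D_M$ is a Jaffard overring of $D$ exactly when $M$ is isolated in $\Max(D)^\inverse$; iterating, I would perform a transfinite induction, the successor stage being exactly this characterization and the limit stage being handled by comparing $T_\alpha = \bigcap\{T : T \in \njaff^\alpha(\Theta)\}$ with the intersection of the localizations at points surviving up to level $\alpha$. This should yield $M \in \deriv^\alpha(\Max(D)^\inverse)$ if and only if $D_M \in \njaff^\alpha(\Theta)$. Scatteredness of $\Max(D)^\inverse$ then forces $\njaff^\alpha(\Theta) = \emptyset$ and $T_\alpha = K$ for some $\alpha$, so $\Theta$ is sharp.

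At that point, since $\Inv(D_M)$ is free for every maximal $M$ by hypothesis, the final assertion of Proposition \ref{prop:preJaff} gives the isomorphism $\Inv(D) \simeq \bigoplus_{M \in \Max(D)} \Inv(D_M)$, and the right-hand side is free as a direct sum of free groups. I expect the main obstacle to be the transfinite matching of $\deriv^\alpha(\Max(D)^\inverse)$ with $\njaff^\alpha(\Theta)$: the successor step reduces to the dimension-one characterization of Jaffard overrings, but the limit step requires reconciling the ring-theoretic definition of $T_\alpha$ with topological limit points, in the spirit of the arguments of \cite[Section 5]{SP-scattered} and \cite{jaff-derived}.
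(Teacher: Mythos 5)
Your proposal follows essentially the same route as the paper: the paper's proof simply observes that $\Theta=\{D_M\mid M\in\Max(D)\}$ is a pre-Jaffard family, cites \cite[Theorem 8.4]{jaff-derived} for the identification $\njaff^\alpha(\Theta)=\deriv^\alpha(\Max(D)^\inverse)$ (the transfinite matching you propose to carry out by hand), and then applies Proposition \ref{prop:preJaff}. The only small caveat is that compactness of $\Theta$ does not rely on scatteredness --- scatteredness is used only to force $\njaff^\alpha(\Theta)=\emptyset$, i.e.\ sharpness --- but this does not affect the correctness of the argument.
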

\begin{proof}
The family $\Theta:=\{D_M\mid M\in\Max(D)\}$ is a pre-Jaffard family, and $\njaff^\alpha(\Theta)=\deriv^\alpha(\Theta^\inverse)\simeq\deriv^\alpha(\Max(D)^\inverse)$ \cite[Theorem 8.4]{jaff-derived}. We can now apply Proposition \ref{prop:preJaff}.
\end{proof}

\begin{oss}
When the groups $\Inv(D_M)$ are not free, in general we do not have a decomposition $\Inv(D)\simeq\bigoplus_M\Inv(D_M)$. For example, suppose that $D$ is a one-dimensional Pr\"ufer domain such that $\Max(D)^\inverse$ has a unique limit point, say $Q$; suppose that $D_P$ is a DVR for every $P\neq Q$ and that $\Gamma(D_Q)\simeq\insQ$. Taking $\Theta=\{D_M\mid M\in\Max(D)\}$, we have $\njaff(\Theta)=\{D_Q\}$, and thus from the extension map $\Inv(D)\longrightarrow\Inv(D_Q)$ we get an exact sequence
\begin{equation*}
0\longrightarrow F\longrightarrow\Inv(D)\longrightarrow\insQ\longrightarrow 0,
\end{equation*}
where $F$ is a free group. We claim that $\Inv(D)$ does not contain any subgroup isomorphic to $\insQ$.

Indeed, say that an element $g$ of a group $G$ is \emph{divisible} if for every positive integer $n$ there is an $h\in G$ such that $h^n=g$. Then, every element of $\insQ$ is divisible, and thus every group containing $\insQ$ contains nonzero divisible elements.

Suppose that $I\in\Inv(D)$ is divisible. Since $Q$ is a limit point of $\Max(D)^\inverse$, there is a $P\neq Q$ such that $ID_P\neq D_P$. If $n>|v_P(I)|\neq 0$, then $|v_P(J^n)|=n|v_P(J)|$ is either $0$ or at least $n$, and thus cannot be equal to $v_P(I)$. Thus $J^n\neq I$ for every $J$, and $I$ is not divisible. Hence, $\Inv(D)$ cannot contain a group isomorphic to $\insQ$, as claimed.
\end{oss}

\section{Pr\"ufer domains}\label{sect:prufer}
In this section, we analyze the case of Pr\"ufer domains. We start with a very straightforward lemma.
\begin{lemma}
Let $V$ be a valuation domain. Then, $\Inv(V)\simeq\Gamma(V)$.
\end{lemma}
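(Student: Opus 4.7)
The plan is to reduce $\Inv(V)$ to $\insprinc(V)$ and then identify the latter with the value group via the valuation map.

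First I would observe that in a valuation domain every finitely generated ideal is principal: since the ideals of $V$ are linearly ordered, a finite set of generators $x_1,\ldots,x_n$ has one element $x_i$ of smallest valuation, which then generates the whole ideal. Since every invertible ideal is finitely generated (as recalled in the preliminaries), this gives $\Inv(V)=\insprinc(V)$. Alternatively, this is just the statement that a local domain has $\Inv(V)=\insprinc(V)$, already noted in the preliminaries.

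Next I would use the exact sequence
\begin{equation*}
0\longrightarrow\unit(V)\longrightarrow\unit(K)\longrightarrow\insprinc(V)\longrightarrow 0
\end{equation*}
from the preliminaries to conclude that $\insprinc(V)\simeq\unit(K)/\unit(V)$. Finally, the valuation $v\colon K\setminus\{0\}\longrightarrow\Gamma(V)$ is by definition a surjective group homomorphism whose kernel is exactly $\unit(V)$ (since $v(x)=0$ iff both $x$ and $x^{-1}$ lie in $V$). Passing to the quotient therefore yields an isomorphism $\unit(K)/\unit(V)\simeq\Gamma(V)$, and composing with the previous identification gives $\Inv(V)\simeq\Gamma(V)$.

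There is no real obstacle here; the entire argument is a chain of standard identifications and the only thing to be careful about is citing that invertible ideals are finitely generated (hence principal over a valuation domain) so that the reduction $\Inv(V)=\insprinc(V)$ is justified before invoking the exact sequence.
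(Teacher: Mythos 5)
Your argument is correct and is essentially the paper's proof: the paper likewise reduces to the fact that invertible ideals of a valuation domain are principal and then defines the isomorphism $xV\mapsto v(x)$ directly, which is exactly the composite of your two identifications $\Inv(V)=\insprinc(V)\simeq\unit(K)/\unit(V)\simeq\Gamma(V)$. Your version merely spells out the well-definedness and bijectivity that the paper leaves as ``easy to see.''
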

\begin{proof}
Every invertible ideal of a valuation domain is principal; hence, we have a map
\begin{equation*}
\begin{aligned}
\gamma \colon \Inv(V) & \longrightarrow \Gamma(V),\\
xV & \longmapsto v(x).
\end{aligned}
\end{equation*}
It is easy to see that $\gamma$ is well-defined, injective and surjective. Thus $\Inv(V)\simeq\Gamma(V)$, as claimed.
\end{proof}

Thus, Corollary \ref{cor:locfin} immediately becomes:
\begin{cor}\label{cor:prufer-locfin}
Let $D$ be a locally finite Pr\"ufer domain. If $\Gamma(D_M)$ is free for every $M\in\Max(D)$, then $\Inv(D)$ is free.
\end{cor}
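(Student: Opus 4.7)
The plan is to combine the preceding lemma with Corollary \ref{cor:locfin}, since the hypotheses have been set up precisely so that this becomes a two-line reduction. Because $D$ is a Pr\"ufer domain, each localization $D_M$ at a maximal ideal $M$ is a valuation domain, so the lemma just proved gives $\Inv(D_M)\simeq\Gamma(D_M)$.

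Thus the assumption that $\Gamma(D_M)$ is free for every $M\in\Max(D)$ translates directly into the statement that $\Inv(D_M)$ is free for every $M\in\Max(D)$. Since $D$ is assumed to be locally finite, Corollary \ref{cor:locfin} applies verbatim and yields that $\Inv(D)$ is free.

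There is essentially no obstacle: the work was done upstream, in Lemma \ref{lemma:extInv} and Proposition \ref{prop:oplus-tcomplete} (which underlie Corollary \ref{cor:locfin}) and in the identification $\Inv(V)\simeq\Gamma(V)$ for valuation domains. The only thing one might briefly flag in the write-up is that ``locally finite Pr\"ufer domain'' is precisely the setting in which the family $\{D_M\mid M\in\Max(D)\}$ is locally finite and each $\Inv(D_M)$ is free, so the two hypotheses of Corollary \ref{cor:locfin} are simultaneously satisfied.
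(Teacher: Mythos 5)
Your proof is correct and is exactly the paper's intended argument: the paper states the corollary immediately after the lemma $\Inv(V)\simeq\Gamma(V)$ with the remark that Corollary \ref{cor:locfin} ``immediately becomes'' this statement, which is precisely your two-step reduction. Nothing to add.
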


Likewise, the results of the previous sections can be specialized to the case of one-dimensional Pr\"ufer domains.
\begin{prop}
Let $D$ be a one-dimensional Pr\"ufer domain.
\begin{enumerate}[(a)]
\item If $D$ is locally finite, then
\begin{equation*}
\Inv(D)\simeq\bigoplus_{M\in\Max(D)}\Gamma(D_M).
\end{equation*}
In particular, $\Inv(D)$ is free if and only if $\Gamma(D_M)$ is free for every $M\in\Max(D)$.
\item If $\Max(D)^\inverse$ is scattered and $\Gamma(D_M)$ is free for every $M\in\Max(D)$, then $\Inv(D)$ is free.
\end{enumerate}
\end{prop}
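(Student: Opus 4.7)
The plan is to reduce both parts directly to results already proved in earlier sections, using the preceding lemma $\Inv(V)\simeq\Gamma(V)$ for valuation domains $V$ to translate group-of-invertible-ideals statements into value-group statements. Since $D$ is a one-dimensional Pr\"ufer domain, each localization $D_M$ is a valuation domain, so $\Inv(D_M)\simeq\Gamma(D_M)$ for every $M\in\Max(D)$.

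For part (a), I would first invoke the fact, recorded in the preliminaries on Jaffard families, that in a one-dimensional locally finite domain the family $\Theta:=\{D_M\mid M\in\Max(D)\}$ is a Jaffard family. Then Proposition \ref{prop:jaffard}(a) gives the isomorphism
\begin{equation*}
\Inv(D)\simeq\bigoplus_{M\in\Max(D)}\Inv(D_M),
\end{equation*}
and componentwise application of $\Inv(D_M)\simeq\Gamma(D_M)$ yields the stated decomposition $\Inv(D)\simeq\bigoplus_{M\in\Max(D)}\Gamma(D_M)$. The ``in particular'' clause is then immediate, since a direct sum of abelian groups is free if and only if each summand is free.

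For part (b), the hypothesis that $\Gamma(D_M)$ is free means, by the lemma, that $\Inv(D_M)$ is free for every $M\in\Max(D)$. Combined with the assumption that $\Max(D)^\inverse$ is scattered, this puts us precisely in the setting of the corollary to Proposition \ref{prop:preJaff} in the previous section, which gives $\Inv(D)\simeq\bigoplus_{M\in\Max(D)}\Inv(D_M)$ and in particular the freeness of $\Inv(D)$.

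There is really no conceptual obstacle here; the work was done in Sections \ref{sect:locfin} and \ref{sect:preJaff}, and the only thing to check is that the translation $\Inv(D_M)\leftrightarrow\Gamma(D_M)$ is applicable, which it is because Pr\"ufer localizes to valuation. The mildest care is needed in part (a) to confirm that $\Theta$ is genuinely Jaffard (not merely complete and independent), which is where one-dimensionality together with local finiteness is used, exactly as observed in the subsection on Jaffard families.
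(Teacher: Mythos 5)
Your proposal is correct and follows essentially the same route as the paper: part (a) via the Jaffard family $\{D_M\mid M\in\Max(D)\}$ and Proposition \ref{prop:jaffard}, part (b) via the pre-Jaffard/scattered machinery of Proposition \ref{prop:preJaff} (the paper invokes sharpness of the family directly, you invoke the corollary derived from it, which is the same argument), with the identification $\Inv(D_M)\simeq\Gamma(D_M)$ doing the translation in both cases. No gaps.
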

\begin{proof}
Let $\Theta:=\{D_M\mid M\in\Max(D)\}$. If $D$ is locally finite, then $\Theta$ is a Jaffard family, and the claim follows from Proposition \ref{prop:jaffard}. In general, $\Theta$ is a pre-Jaffard family, and if $\Max(D)^\inverse$ is scattered then $\Theta$ is sharp \cite[Corollary 8.6]{jaff-derived}; the second claim now follows from Proposition \ref{prop:preJaff}.
\end{proof}

We can say something more about valuation domains.
\begin{prop}\label{prop:val-nobranched}
Let $V$ be a valuation domain without branched prime ideals. Suppose that, for every prime ideal $P\neq(0)$, the group $\Gamma(V_P/Q)$ is free, where $Q$ is the prime ideal directly below $P$. Then, $\Gamma(V)$ is free.
\end{prop}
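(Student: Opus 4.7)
The plan is to use the standard order-reversing bijection between prime ideals of $V$ and convex subgroups of $\Gamma:=\Gamma(V)$. If $P$ corresponds to the convex subgroup $\Delta$ and $Q\subsetneq P$ is the prime directly below, then $Q$ corresponds to the immediate successor $\Delta'$ of $\Delta$ in the chain of convex subgroups, and $\Gamma(V_P/Q)$ is canonically isomorphic to the quotient $\Delta'/\Delta$. The hypothesis thus translates to the following condition on $\Gamma$: every proper convex subgroup has an immediate successor in the chain of convex subgroups, and every such successive quotient is a free abelian group.

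First I would show that the chain $\mathcal{C}$ of convex subgroups of $\Gamma$ is well-ordered by inclusion. For any nonempty $S\subseteq\mathcal{C}$ with intersection $\Delta^{*}$, if $\Delta^{*}\notin S$ then $\Delta^{*}\subsetneq\Gamma$ has an immediate successor $\Delta^{**}$; every $\Delta\in S$ strictly contains $\Delta^{*}$, so (since nothing lies strictly between $\Delta^{*}$ and $\Delta^{**}$) we get $\Delta\supseteq\Delta^{**}$, forcing $\Delta^{**}\subseteq\bigcap S=\Delta^{*}$, a contradiction. Hence $\mathcal{C}=\{\Delta_\alpha\}_{\alpha\leq\alpha_0}$ is a well-ordered ascending chain with $\Delta_0=\{0\}$, $\Delta_{\alpha_0}=\Gamma$, each $\Delta_{\beta+1}$ the immediate successor of $\Delta_\beta$, and (by a parallel uniqueness argument) $\Delta_\lambda=\bigcup_{\beta<\lambda}\Delta_\beta$ at every limit ordinal $\lambda$.

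The heart of the proof is a transfinite induction on $\alpha$ showing that $\Delta_\alpha$ admits a direct sum decomposition $\Delta_\alpha=\bigoplus_{\beta<\alpha}F_\beta$, where each $F_\beta$ is a chosen complement of $\Delta_\beta$ in $\Delta_{\beta+1}$ (necessarily isomorphic to $\Delta_{\beta+1}/\Delta_\beta$, hence free). At a successor $\alpha=\beta+1$, the quotient $\Delta_{\beta+1}/\Delta_\beta$ is free by hypothesis, so projectivity splits the exact sequence $0\to\Delta_\beta\to\Delta_{\beta+1}\to\Delta_{\beta+1}/\Delta_\beta\to 0$ and supplies the complement $F_\beta$; the refined decomposition on $\Delta_{\beta+1}$ then extends the one inductively available on $\Delta_\beta$. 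Taking $\alpha=\alpha_0$ exhibits $\Gamma$ as a direct sum of free groups, hence free.

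The delicate point, which I expect to be the main obstacle, is the limit step, since a directed union of free abelian groups need not be free (for instance the additive group of rationals is an ascending union of infinite cyclic subgroups). Here, however, the filtration is smooth (that is, $\Delta_\lambda=\bigcup_{\beta<\lambda}\Delta_\beta$ at limits) and every successive quotient is free, so the ascending union coincides with the direct sum: every element of $\Delta_\lambda$ already lies in some $\Delta_\gamma$ with $\gamma<\lambda$ and so decomposes via the inductive hypothesis, while any finite relation among the $F_\beta$'s lives inside some $\Delta_\gamma$ and must be trivial by the uniqueness of the decomposition on $\Delta_\gamma$. This closes the transfinite induction and gives the claim.
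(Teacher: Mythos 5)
Your proof is correct, modulo the observation that you (rightly) read the hypothesis as ``no \emph{un}branched prime ideals'' --- the literal statement is a typo, since a prime directly below $P$ exists precisely when $P$ is branched, and this is how the result is applied in Corollary \ref{cor:val-stronglydisc}. Both your argument and the paper's ultimately exhibit $\Gamma(V)$ as a direct sum of the groups $\Gamma(V_P/Q)$ over the covering pairs $Q\subsetneq P$, but the routes differ. The paper works entirely inside $\Inv(V)$: it lifts a basis of each $\Inv(V_P/Q)$ to principal ideals of $V$, checks independence of the resulting subgroups $H(P)$ by localizing at the smallest prime occurring in a relation, and checks generation by repeatedly dividing a given $xV$ by elements of the $H(P)$'s; the absence of unbranched primes forces the resulting strictly ascending chain of minimal primes to terminate (its union would be an unbranched prime), so every element decomposes after finitely many steps and no transfinite induction is needed. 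You instead pass to the value group, show that the hypothesis makes the chain of convex subgroups well-ordered and continuous at limits, and run the standard smooth-filtration argument: split at successors by projectivity of the free quotients, pass to unions at limits. Your identification $\Gamma(V_P/Q)\simeq\Delta'/\Delta$, the well-ordering argument, and both induction steps all check out. What each buys: your version makes the order-theoretic content explicit and reduces the combinatorics to the same filtration lemma the paper invokes elsewhere (in the proof of Proposition \ref{prop:preJaff}, via \cite[Chapter 3, Lemma 7.3]{fuchs-abeliangroups}), while the paper's version avoids ordinals altogether and produces an explicit basis of $\Inv(V)$ consisting of principal ideals.
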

\begin{proof}
Note first that, if $\Inv(W)\simeq\Gamma(W)$ is free, we can always find a basis of integral invertible ideal by passing, if needed, from $xW$ to $x^{-1}W$.

For every prime ideal $P\neq(0)$, let $\mathcal{B}(P)$ be a subset of $\Inv(V)$ such that the image of $\mathcal{B}(P)$ into $D_P/Q$ is a basis for $\Inv(D_P/Q)$; in particular, each element of $\mathcal{B}(P)$ is generated by an element of $P\setminus Q$. Then, $\mathcal{B}(P)$ is an independent subset of $\Inv(V)$, and thus it generates a free group, say $H(P)$. Let $H$ be the group generated by all the $H(P)$.

We claim that $H=\bigoplus_P H(P)$. Indeed, suppose that there are $y_1,\ldots,y_k\in K$ such that $y_iV\in H(P_i)$ and $y_1\cdots y_kV=V$, with $P_i\neq P_j$ if $i\neq j$. We can suppose without loss of generality that  $P_1\supsetneq\cdots\supsetneq P_k$. Then, $y_1\cdots y_kV_{P_k}=V_{P_k}$. If $i<k$ and $xV\in\mathcal{B}(P_i)$, then $x\in P_i\setminus P_k$, and thus $xV_{P_k}=V_{P_k}$: it follows that $y_iV_{P_k}=V_{P_k}$ for all $i<k$, and thus also $y_kV_{P_k}=V_{P_k}$. Since the quotient map $V\longrightarrow V_P/Q$ induces an isomorphism between $H(P)$ and $\Inv(V_P/Q)$, it follows that $y_kV$ is the identity in $\Inv(V)$, i.e., $y_kV=V$. Repeating the process we obtain that $y_iV=V$ for every $i$. Thus the $H(P)$ are independent subgroups, and $H$ is their direct sum.

We now claim that $H=\Inv(V)$. Let $xV\in\Inv(V)$, and let $P$ be the smallest prime ideal containing $x$ (we can suppose $x\in V$). Then, $xV_P=y_0V_P$ for some $y_0\in H(P)$: indeed, $xV_P/Q$ can be written as a product $w_1^{n_1}\cdots w_t^{n_t}$, where $w_i=\pi(z_i)$ for some $z_i\in\mathcal{B}(P)$ (and $\pi:V\longrightarrow V_P/Q$ is the canonical map) and we can take $y_0:=z_1^{n_1}\cdots z_t^{n_t}$. Let $x_1$ be either $xy_0^{-1}$ or $x^{-1}y_0$, according to which is in $V$: if $x_1V\neq V$, then $x_1V_P=V_P$, and thus the minimal prime $P_1$ over $x_1V$ properly contains $P$. Continuing in this way, we obtain an ascending chain $P\subsetneq P_1\subsetneq P_2\subsetneq \cdots$ of prime ideals; however, since there are no branched prime ideals, such a chain must stop. (The union of an ascending chain of prime ideals is an unbranched prime.) Thus, at one point we must have $x_nV=V$, and thus $x_nV\in H$; by construction, also $xV\in H$. Hence $H=\Inv(V)$, as claimed.

In particular, as a direct sum of free groups, $\Inv(V)\simeq\Gamma(V)$ is free.
\end{proof}

\begin{cor}\label{cor:val-stronglydisc}
Let $D$ be a locally finite strongly discrete Pr\"ufer domain. Then, $\Inv(D)$ is free.
\end{cor}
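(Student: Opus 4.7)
The strategy is to chain Corollary~\ref{cor:prufer-locfin} with Proposition~\ref{prop:val-nobranched}: the first reduces freeness of $\Inv(D)$ to freeness of the value groups $\Gamma(D_M)$, and the second lets us build freeness of $\Gamma(D_M)$ from freeness of the DVR-like quotients $\Gamma(V_P/Q)$. So I fix a maximal ideal $M$, pass to the strongly discrete valuation domain $V:=D_M$ (strong discreteness is preserved by localization at a prime, since idempotency of a prime $Q$ is intrinsic: $Q=Q^2$ in $D$ iff $QD_M=(QD_M)^2$ in $D_M$), and aim to verify the hypotheses of Proposition~\ref{prop:val-nobranched} for $V$.

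The first thing to check is that every nonzero prime $P$ of $V$ is branched, so that ``the prime $Q$ directly below $P$'' appearing in Proposition~\ref{prop:val-nobranched} actually exists. Here I would invoke the standard observation that in a valuation domain an unbranched prime is automatically idempotent: if $P=\bigcup_{Q\subsetneq P}Q$, then for any $x\in PV_P$ one can find $y\in PV_P$ with $0<v(y)<v(x)/2$, and writing $x=y^2\cdot(xy^{-2})$ exhibits $x\in (PV_P)^2$. Since $D$ is strongly discrete no nonzero prime of $V$ is idempotent, so every nonzero prime is indeed branched.

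Next, for each such $P$ with immediate predecessor $Q$, I need $\Gamma(V_P/QV_P)$ to be free. The ring $V_P/QV_P$ is a one-dimensional valuation domain whose maximal ideal is the image of $PV_P$. Strong discreteness gives $PV_P\neq(PV_P)^2$, and in a valuation domain a non-idempotent maximal ideal is principal (pick $x\in PV_P\setminus(PV_P)^2$; then $v(x)$ is the smallest positive value of $\Gamma(V_P)$, and it generates $PV_P$). Hence $V_P/QV_P$ is a DVR with value group $\insZ$, which is free.

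With both hypotheses verified, Proposition~\ref{prop:val-nobranched} yields that $\Gamma(D_M)$ is free for every maximal $M$, and then Corollary~\ref{cor:prufer-locfin} concludes that $\Inv(D)$ is free. The main conceptual step is the first one, matching ``strongly discrete'' to the branched-primes hypothesis of Proposition~\ref{prop:val-nobranched} via the equivalence of idempotence and unbranchedness for nonzero primes of a valuation domain; the remaining verifications are routine.
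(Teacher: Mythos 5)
Your proof is correct and takes essentially the same route as the paper's: reduce to the valuation case via local finiteness (your Corollary \ref{cor:prufer-locfin} is the paper's Proposition \ref{prop:oplus-tcomplete} in the form used), then verify the hypotheses of Proposition \ref{prop:val-nobranched} by noting that strong discreteness excludes unbranched primes and forces each $V_P/Q$ to be a DVR. You merely make explicit the standard valuation-theoretic facts (unbranched $\Rightarrow$ idempotent, non-idempotent maximal ideal $\Rightarrow$ principal, and the intrinsic nature of idempotence under localization) that the paper leaves implicit.
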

\begin{proof}
Suppose first that $D=V$ is a valuation domain: then, no prime ideal of $V$ is unbranched. Moreover, for every prime $P$, since $P\neq P^2$ then $\Gamma(V_P/Q)\simeq\insZ$ (where $Q$ is the prime ideal directly below $P$) and thus it is free. The claim follows from Proposition \ref{prop:val-nobranched}.

If $D$ is locally finite, the claim now follows from the previous part of the proof and Proposition \ref{prop:oplus-tcomplete}.
\end{proof}

We now want to use the properties of Pr\"ufer domains to study more deeply the conditions under which $\Inv(D)$ is free, and to extend as far as possible those results to the group $\Div(D)$ of $v$-invertible ideals. The idea is to use quotients by divided primes. Recall that a prime ideal $P$ is \emph{divided} if $P=PD_P$.

Let $D$ be a Pr\"ufer domain, $\star$ a star operation on $D$, and let $P$ be a non-maximal divided prime ideal. Let $\pi:D\longrightarrow D/P$ be the quotient. Then, $\star$ induces a star operation $\sharp$ on $D/P$, defined by
\begin{equation*}
I^\sharp:=\pi(\pi^{-1}(I)^\star)
\end{equation*}
for every fractional ideal $I$ of $D$ (see \cite{fontana-park} and \cite[Section 6]{starloc}). The sets of $\star$-invertible and $\sharp$-invertible ideals are closed related.
\begin{prop}\label{prop:divided-invstar}
Let $D$ be a Pr\"ufer domain and let $P$ be a non-maximal divided prime ideal. Let $\star$ be a star operation on $D$ and let $\sharp$ be the corresponding star operation on $R:=D/P$. Then, there is an exact sequence
\begin{equation*}
0\longrightarrow \Inv^\sharp(R)\longrightarrow \Inv^\star(D)\longrightarrow \Gamma(D_P)\longrightarrow 0.
\end{equation*}
In particular, if $\Gamma(D_P)$ is free, then $\Inv^\star(D)$ is free if and only if $\Inv^\sharp(R)$ is free.
\end{prop}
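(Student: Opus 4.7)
The plan is to build the exact sequence by defining a surjective valuation map on the right and identifying the kernel with $\Inv^\sharp(R)$ through the divided-prime correspondence between fractional ideals of $D$ and of $R$.

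First I would construct the right-hand map $\lambda : \Inv^\star(D) \to \Gamma(D_P)$. Since $D$ is Pr\"ufer, $D_P$ is a valuation domain, so for any $I \in \Inv^\star(D)$ the extension $ID_P$ is a principal fractional ideal of $D_P$; set $\lambda(I) := v_P(ID_P)$, where $v_P$ is the valuation attached to $D_P$. Surjectivity is immediate because every $\gamma \in \Gamma(D_P)$ is already hit by a principal ideal $xD \in \insprinc(D) \subseteq \Inv^\star(D)$ (choose $x \in K^\times$ with $v_P(x) = \gamma$). For the homomorphism property the key identity is $(IJ)^\star D_P = IJD_P$ for $I,J \in \Inv^\star(D)$; the containment $\supseteq$ is trivial, while for the reverse one uses that $IJD_P$, being the product of two principal fractional ideals in the valuation domain $D_P$, is already principal and hence closed under any extension of $\star$ to $D_P$.

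Next I would identify $\ker \lambda$ with $\Inv^\sharp(R)$. An element $I \in \ker \lambda$ satisfies $ID_P = D_P$, which forces $I \subseteq D_P$ as submodules of $K$. Since $P$ is divided, $P = PD_P$ is the maximal ideal of the valuation domain $D_P$, and $D_P/P$ is the quotient field of $R = D/P$; the projection $\pi_P : D_P \twoheadrightarrow D_P/P$ then sends $I$ to a finitely generated $R$-submodule $\overline{I}$ of $\mathrm{Frac}(R)$, i.e., a fractional ideal of $R$. The assignment $I \mapsto \overline{I}$ admits the inverse $\overline{J} \mapsto \pi_P^{-1}(\overline{J})$, and the defining formula $\overline{J}^\sharp = \pi(\pi^{-1}(\overline{J})^\star)$ ensures that this correspondence matches $\star$-closed (resp.\ $\star$-invertible) ideals of $D$ with $\sharp$-closed (resp.\ $\sharp$-invertible) ideals of $R$, and converts $\star$-products into $\sharp$-products; this yields the required isomorphism $\ker \lambda \simeq \Inv^\sharp(R)$.

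The main technical obstacle is the compatibility of the correspondence with $\star$-invertibility and the $\star$-product rather than just with set-theoretic inclusion. In particular, for $I \in \ker \lambda$ with $\star$-inverse $I^{-1}$ in $D$, one must verify that $I^{-1}D_P = D_P$ as well, so that $\overline{I^{-1}}$ is a well-defined $\sharp$-inverse of $\overline{I}$; this follows from $(II^{-1})^\star = D$ together with the identity established in Step~1 (localization at $D \setminus P$ respects the $\star$-product). Once the exact sequence is in place, the ``in particular'' clause is routine: if $\Gamma(D_P)$ is free, then free groups are projective, so the sequence splits and $\Inv^\star(D) \simeq \Inv^\sharp(R) \oplus \Gamma(D_P)$; one direction of the equivalence of freeness is immediate from this decomposition, and the other uses that subgroups of free groups are free.
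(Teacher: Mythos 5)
Your overall architecture matches the paper's: a valuation-type map onto $\Gamma(D_P)$, kernel identified with $\Inv^\sharp(R)$ via the divided-prime correspondence, and projectivity of free groups for the splitting. However, there is a genuine gap at the very first step, and it sits exactly where the hypotheses of the proposition do their work. You assert that $ID_P$ is principal for every $I\in\Inv^\star(D)$ ``since $D_P$ is a valuation domain.'' That inference is valid only when $I$ is finitely generated or locally principal, which holds for $\star=d$ but not for a general star operation; the proposition is applied in the paper with $\star=v$, and a $v$-invertible $v$-ideal need not be locally principal. Indeed, the paper's own example in Section 3 (the Jacobson radical $J$ of a suitable one-dimensional Pr\"ufer domain is a $v$-invertible divisorial ideal with $JD_Q=QD_Q$ not principal) shows that ``$\star$-invertible ideals extend to principal ideals of a localization'' is false in general. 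Proving principality of $ID_P$ is the heart of the paper's argument: $\star$-invertibility gives $v$-invertibility, hence $(I:I)=D$; since $P$ is divided, $P=PD_P\subseteq D$, so $I$ contains every $y$ with $v_P(y)>\gamma$ whenever $\gamma\in v_P(I)$; if $v_P(I)$ had no minimum, this would force $ID_P\subseteq I$, i.e.\ $D_P\subseteq(I:I)=D$, contradicting the non-maximality of $P$. Your Step~1 never invokes the divided hypothesis or $(I:I)=D$, which signals that this essential point has been skipped.

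A secondary problem is your justification of $(IJ)^\star D_P=IJD_P$ by appeal to ``any extension of $\star$ to $D_P$'': the same Section 3 example shows that the $v$-operation need not extend to a localization, so such an extension cannot be presumed. The identity (equivalently, that $I\mapsto\min v_P(I)$ is multiplicative) can be recovered, for instance by running the minimum-value argument on $(D:I)$ and $(D:J)$ as well, but it requires an argument rather than a one-line appeal. The remaining parts --- the description of the kernel as the ideals with $P\subsetneq I\subseteq D_P$, its identification with $\Inv^\sharp(R)$, and the splitting when $\Gamma(D_P)$ is free --- are correct and agree with the paper, which delegates the kernel correspondence to an external reference.
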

\begin{proof}
Let $I$ be a $\star$-invertible ideal: we claim that $v_P(I)$ has a minimum in $\Gamma(D_P)$. Indeed, if $I$ is $\star$-invertible then it is also $v$-invertible, and thus $(I:I)=D$. Since $PD_P=P\subseteq D$, if $x\in I$ and $p\in P$ we have $xp\in I$; therefore, if $\gamma\in v_P(I)$ then $I$ contains all $y$ such that $v_P(y)>\gamma$. Suppose $v_P(I)$ has not a minimum, and let $x\in I$. Then, there is an $x'\in I$ such that $v_P(x')<v_P(x)$, and thus $I$ contains all elements $y$ such that $v_P(y)=v_P(x)$; in particular, $xD_P\subseteq I$, and since $x$ was arbitrary we would have $ID_P\subseteq I$, against $(I:I)=D$. Thus $v_P(I)$ has a minimum.

In particular, the map
\begin{equation*}
\begin{aligned}
\pi\colon \Inv^\star(D) &\longrightarrow \Gamma(D_P),\\
I & \longmapsto \min v_P(I)
\end{aligned}
\end{equation*}
is well-defined and a surjective group isomorphism. Its kernel is $\ker\pi=\{I\in\Inv^\star(D)\mid P\subsetneq I\subseteq D_P\}$; by the proof of \cite[Proposition 7.3]{starloc}, the quotient $D\longrightarrow R$ induces an isomorphism between this set and $\Inv^\sharp(R)$. The existence of the exact sequence follows.

For the ``in particular'' statement, if $\Gamma(D_P)$ is free then the sequence splits and $\Inv^\star(D)\simeq\Inv^\sharp(R)\oplus\Gamma(D_P)$. In particular, $\Inv^\star(D)$ is free if and only if $\Inv^\sharp(R)$ is free.
\end{proof}

\begin{cor}\label{cor:divided-invstar:free}
Let $D$ be a Pr\"ufer domain and let $P$ be a non-maximal divided prime ideal. Suppose that $\Gamma(D_P)$ is free. Then:
\begin{enumerate}[(a)]
\item $\Inv(D)$ is free if and only if $\Inv(D/P)$ is free;
\item $\Div(D)$ is free if and only if $\Div(D/P)$ is free.
\end{enumerate}
\end{cor}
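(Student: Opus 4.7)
The plan is to derive the corollary as a direct specialization of Proposition \ref{prop:divided-invstar} to the two star operations $\star = d$ (for part (a)) and $\star = v$ (for part (b)). In both cases the strategy is identical: identify the induced star operation $\sharp$ on $R := D/P$, then invoke the ``in particular'' clause of Proposition \ref{prop:divided-invstar}, which under the freeness of $\Gamma(D_P)$ provides a splitting $\Inv^\star(D) \simeq \Inv^\sharp(R) \oplus \Gamma(D_P)$; the question of freeness of $\Inv^\star(D)$ then reduces to that of $\Inv^\sharp(R)$.

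For (a) I take $\star = d$. For any fractional ideal $I$ of $R$, the preimage $\pi^{-1}(I)$ is a fractional ideal of $D$ and $\pi^{-1}(I)^d = \pi^{-1}(I)$, so by definition $I^\sharp = \pi(\pi^{-1}(I)) = I$ (using surjectivity of $\pi$). Hence $\sharp$ is the identity star operation on $R$ and $\Inv^\sharp(R) = \Inv(D/P)$. Proposition \ref{prop:divided-invstar} then yields $\Inv(D) \simeq \Inv(D/P) \oplus \Gamma(D_P)$, from which (a) follows immediately: the right-hand side is free if and only if $\Inv(D/P)$ is free, since $\Gamma(D_P)$ is free by hypothesis and the direct sum of free groups is free.

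For (b) I take $\star = v$. The crux is to verify that the induced star operation $\sharp$ on $R$ coincides with the $v$-operation $v_R$ on $R$. This is a compatibility property of the Fontana--Park correspondence between star operations on $D$ and on quotients by divided primes, developed in \cite{fontana-park} and \cite[Section 6]{starloc}; morally, the $v$-operation is characterized as the largest star operation, and this extremal property is preserved under the correspondence, so $\sharp = v_R$. Once this identification is in hand, $\Inv^\sharp(R) = \Div(D/P)$, and Proposition \ref{prop:divided-invstar} gives $\Div(D) \simeq \Div(D/P) \oplus \Gamma(D_P)$, whence (b) follows by the same reasoning as in (a).

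The only nontrivial step is the identification $\sharp = v_R$ in part (b); part (a) is a mechanical computation, and the rest is purely an appeal to the preceding proposition and to the fact that a group is free if and only if each summand in a splitting by a free group is free.
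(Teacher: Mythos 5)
Your proposal is correct and follows essentially the same route as the paper: the paper's proof is precisely the one-line specialization of Proposition \ref{prop:divided-invstar} to $\star=d$ (so $\sharp=d$) and $\star=v$ (so $\sharp=v$). Your additional verification that $\sharp$ is the identity in case (a), and your appeal to the Fontana--Park correspondence for $\sharp=v_R$ in case (b), only make explicit what the paper asserts without comment.
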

\begin{proof}
Both statements follow from Proposition \ref{prop:divided-invstar}: the first one using $\star=d$ (so $\sharp=d$), the second one by using $\star=v$ (and thus also $\sharp=v$).
\end{proof}

Let $D$ be a Pr\"ufer domain. Following \cite{starloc}, we say that a prime $P$ of $D$ is a \emph{branching point} for $\Spec(D)$ if there is a family $\Delta\subseteq\Spec(D)$ of pairwise incomparable primes, each one strictly larger than $P$, such that $P=\inf\Delta$ (in the containment order). We denote by $\Spechi(D)$ the union of $(0)$, $\Max(D)$ and the branching points of $\Spec(D)$, and we call it the \emph{homeomorphically irreducible tree} associated to $\Spec(D)$.
\begin{lemma}\label{lemma:branchpoint-max}
Let $D$ be a Pr\"ufer domain and let $P$ be a nonmaximal prime ideal. Then, $P$ is a branching point for $\Spec(D)$ if and only if $P=\inf(V(P)\cap\Max(D))$.
\end{lemma}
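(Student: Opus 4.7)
The plan is to prove the two implications separately. The backward direction is essentially a matter of unwinding the definition and taking $\Delta := V(P)\cap\Max(D)$ as the witnessing family: the elements of $\Delta$ are pairwise incomparable (distinct maximal ideals are), each strictly contains $P$ (because $P$ is not maximal), and the hypothesis directly says $\inf\Delta = P$. The only sanity check needed is that $|\Delta|\geq 2$, which is forced by the combination ``$P$ nonmaximal'' and ``$\inf\Delta = P$'' (if $\Delta = \{M\}$ then $\inf\Delta = M \neq P$).

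For the forward direction, suppose $P$ is a branching point with witnessing family $\Delta$, and set $P_0 := \inf(V(P)\cap\Max(D))$. Then $P\subseteq P_0$; the task is to rule out strict containment. Assume $P\subsetneq P_0$. Since $P=\inf\Delta$, the prime $P_0$ cannot be contained in every element of $\Delta$, so there exists $Q_0\in\Delta$ with $P_0\not\subseteq Q_0$. Pick a maximal ideal $M_0\supseteq Q_0$; then $M_0\in V(P)\cap\Max(D)$, so $P_0\subseteq M_0$ by definition of $P_0$. Inside the valuation domain $D_{M_0}$ the primes $Q_0$ and $P_0$ become comparable, forcing $Q_0\subsetneq P_0$. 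Now run the same argument for any other $Q_1\in\Delta$, picking $M_1\supseteq Q_1$: again $P_0\subseteq M_1$, so $Q_1$ and $P_0$ are comparable in $D_{M_1}$. If $P_0\subseteq Q_1$ we would get $Q_0\subsetneq P_0\subseteq Q_1$, violating pairwise incomparability of $\Delta$; hence $Q_1\subseteq P_0$. So every element of $\Delta$ lies in $P_0$, i.e., $\Delta$ consists of primes of the valuation domain $D_{P_0}$, which are linearly ordered, contradicting incomparability in $\Delta$ (which has at least two elements as observed above).

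The technical heart is the comparability step: two primes sitting inside the same maximal ideal of a Pr\"ufer domain are automatically comparable, and this, combined with the pairwise incomparability of $\Delta$, funnels all of $\Delta$ into a single valuation localization and produces the contradiction. One auxiliary point worth a brief verification is that $\inf(V(P)\cap\Max(D))$ is well defined in the first place: any two primes contained in every $M\in V(P)\cap\Max(D)$ are comparable (both sit inside any chosen such $M$), so the lower bounds form a chain whose union is again a prime and is the required maximum. No genuine obstacle beyond this basic comparability principle arises.
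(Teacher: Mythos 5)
Your proof is correct and takes essentially the same route as the paper's: both directions rest on the fact that primes of a Pr\"ufer domain lying under a common maximal (or prime) ideal are linearly ordered, which is exactly what forces a witnessing antichain to collapse when $P\subsetneq\inf(V(P)\cap\Max(D))$. The paper organizes the forward direction contrapositively via the decomposition $V(P)=\Delta\cup V(Q)$ with $\Delta$ the chain of primes between $P$ and $Q$, while you argue by contradiction by funnelling the antichain into $D_{P_0}$, but the underlying idea is identical.
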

\begin{proof}
If $P=\inf(V(P)\cap\Max(D))$, then we can use $V(P)\cap\Max(D)$ as the set in the definition of a branching point. Conversely, suppose that $P\neq\inf(V(P)\cap\Max(D))=:Q$ (note that $Q$ exists since $\Spec(D)$ is a tree). Then, $P\subseteq Q$, and thus $V(P)=\Delta\cup V(Q)$, where $\Delta$ is the set of prime ideals between $P$ and $Q$; note that $\Delta$ is linearly ordered, and each element of $\Delta$ is contained in all elements of $V(Q)$. Let $\Lambda\subseteq V(P)\setminus\{P\}$ be a set of pairwise incomparable elements. If $\Delta\cap\Lambda=\emptyset$, then $\Lambda\subseteq V(Q)$ and $\inf\Lambda\supseteq Q$; if $\Delta\cap\Lambda\neq\emptyset$, then by construction $\Lambda$ must be a singleton $\{L\}$, and so $\inf\Lambda=L$. Therefore, $\inf\Lambda\neq P$ for every such $\Lambda$, and thus $P$ is not a branching point.
\end{proof}

\begin{lemma}\label{lemma:Spechi-semiloc}
Let $D$ be a semilocal Pr\"ufer domain.  Then, $\Spechi(D)$ is finite.
\end{lemma}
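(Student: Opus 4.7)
The plan is to reduce the claim to a counting argument using Lemma \ref{lemma:branchpoint-max}. Since $\Spechi(D)=\{(0)\}\cup\Max(D)\cup\{\text{branching points of }\Spec(D)\}$ and $\Max(D)$ is finite by hypothesis (semilocal), it suffices to show that the set $\mathcal{B}$ of branching points is finite.

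By Lemma \ref{lemma:branchpoint-max}, every branching point $P$ (which in particular is nonmaximal) satisfies $P=\inf(V(P)\cap\Max(D))$. Define a map
\begin{equation*}
\Psi\colon\mathcal{B}\longrightarrow 2^{\Max(D)},\qquad P\longmapsto V(P)\cap\Max(D).
\end{equation*}
I would first verify that $\Psi$ is injective: if $\Psi(P_1)=\Psi(P_2)=S$, then $P_1=\inf S=P_2$, using that the inf is uniquely determined by $S$ in the containment order on $\Spec(D)$. (In a Pr\"ufer domain $\Spec(D)$ is a tree in the sense that the primes above any given prime form a chain; this ensures that the inf of any nonempty subset of $\Max(D)$, when it exists as a prime, is unique.)

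Since $\Max(D)$ is finite, $2^{\Max(D)}$ is finite, so $\mathcal{B}$ is finite by the injectivity of $\Psi$. Hence $\Spechi(D)$ is a finite union of finite sets, and is therefore finite.

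The argument is essentially routine once Lemma \ref{lemma:branchpoint-max} is in place; the only mild subtlety is confirming that for each $S\subseteq\Max(D)$ there is at most one prime realizing the description $P=\inf S$, which follows from the tree structure of $\Spec(D)$ for a Pr\"ufer domain. No further obstacles are expected.
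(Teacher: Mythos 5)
Your argument is correct and is essentially the paper's own proof: both use Lemma \ref{lemma:branchpoint-max} to see that a branching point is uniquely determined by the subset $V(P)\cap\Max(D)$ of the finite set $\Max(D)$, so there are only finitely many branching points and hence $\Spechi(D)$ is finite. Your version merely makes the injectivity of the assignment $P\mapsto V(P)\cap\Max(D)$ explicit.
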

\begin{proof}
By Lemma \ref{lemma:branchpoint-max}, a branching point is uniquely defined by a subset of $\Max(D)$. If $D$ is semilocal, then $\Max(D)$ is finite, and thus $\Spec(D)$ has only finitely many branching points. It follows that $\Spechi(D)$ is finite.
\end{proof}

Let $D$ be a Pr\"ufer domain. We say that two maximal ideals $P,Q$ are \emph{dependent} if $P\cap Q$ contains a nonzero prime ideal, or equivalently if $D_PD_Q\neq K$; the fact that $\Spec(D)$ is a tree implies that dependence is an equivalence relation. For an equivalence class $\Delta$, let $T(\Delta):=\bigcap\{D_P\mid P\in\Delta\}$; we call the family $\{T(\Delta)\}$ (as $\Delta$ ranges among the equivalence classes) the \emph{standard decomposition} of $D$. If $D$ is semilocal, or if $\Spec(D)$ is a Noetherian space, the standard decomposition is a Jaffard family \cite[Proposition 6.2]{starloc}; if $D$ has finite dimension, then the members of the standard decomposition are in bijective correspondence with the height-1 primes of $D$ (see \cite[Lemma 6.1]{starloc}).

\begin{prop}\label{prop:cutbranch-semiloc-inv}
Let $D$ be a semilocal Pr\"ufer domain. Suppose that $\Gamma(D_P)$ is free for every $P\in\Spechi(D)\setminus\Max(D)$. Then, $\Inv(D)$ is free if and only if $\Gamma(D_M)$ is free for every $M\in\Max(D)$.
\end{prop}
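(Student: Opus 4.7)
The plan is to induct on the cardinality $|\Spechi(D)|$, which is finite by Lemma \ref{lemma:Spechi-semiloc}. The base case is $|\Max(D)|=1$: then $D$ is a valuation domain and $\Inv(D)\simeq\Gamma(D)$, so the statement is tautological. In the inductive step I split according to whether the maximal ideals of $D$ form a single equivalence class under the dependence relation.

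If $\Max(D)$ has more than one dependence class, then the standard decomposition $\{T(\Delta_j)\}$ is a Jaffard family of $D$ (as recalled just before the statement), so Proposition \ref{prop:jaffard}(a) gives $\Inv(D)\simeq\bigoplus_j\Inv(T(\Delta_j))$. Each $T(\Delta_j)$ is a semilocal Pr\"ufer domain with strictly fewer maximal ideals; since $T(\Delta_j)_{QT(\Delta_j)}=D_Q$ for every surviving prime $Q$, the freeness hypotheses for non-maximal primes transfer verbatim, and one checks $|\Spechi(T(\Delta_j))|<|\Spechi(D)|$. Induction applied to each summand then yields the equivalence.

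If instead $\Max(D)$ is a single dependence class with $|\Max(D)|>1$, set $P_0:=\bigcap_{M\in\Max(D)}M$. By Lemma \ref{lemma:branchpoint-max}, $P_0\neq(0)$ is a branching point, so $\Gamma(D_{P_0})$ is free by hypothesis. The key technical claim is that $P_0$ is divided: for any $s\in D\setminus P_0$ and any maximal $M$ containing $s$, the prime $P_0D_M$ of the valuation domain $D_M$ does not contain $s$ (as $P_0$ is prime in $D$), so by the linear ordering of ideals in $D_M$ one has $P_0D_M\subseteq sD_M$; intersecting over $M\in\Max(D)$ gives $P_0\subseteq sD$ for every $s\in D\setminus P_0$, equivalently $P_0D_{P_0}\subseteq P_0$ and hence $P_0=P_0D_{P_0}$. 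Corollary \ref{cor:divided-invstar:free}(a), available because $\Gamma(D_{P_0})$ is free, then reduces the freeness of $\Inv(D)$ to that of $\Inv(D/P_0)$, and $D/P_0$ is a semilocal Pr\"ufer domain whose spectrum is $V(P_0)$, so $|\Spechi(D/P_0)|<|\Spechi(D)|$.

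To close the induction I would transport the hypothesis to $D/P_0$ via the exact sequence of valuation groups
\begin{equation*}
0\longrightarrow\Gamma\bigl((D/P_0)_{Q/P_0}\bigr)\longrightarrow\Gamma(D_Q)\longrightarrow\Gamma(D_{P_0})\longrightarrow 0,
\end{equation*}
valid for every prime $Q\supseteq P_0$, which splits because $\Gamma(D_{P_0})$ is free. Applied to $Q=M$ maximal this shows $\Gamma(D_M)$ is free if and only if $\Gamma((D/P_0)_{M/P_0})$ is free, matching the conclusion at both ends of the quotient; applied to non-maximal branching primes $Q$ of $D/P_0$ (i.e., branching primes of $D$ strictly containing $P_0$), $\Gamma((D/P_0)_{Q/P_0})$ is free as a subgroup of the free $\Gamma(D_Q)$, carrying the hypothesis to $D/P_0$. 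The main obstacle I expect is the verification that $P_0$ is divided and the careful bookkeeping ensuring that $|\Spechi|$ strictly decreases under each reduction; once these are in place the argument is a clean combination of Proposition \ref{prop:jaffard}, Corollary \ref{cor:divided-invstar:free}, and induction.
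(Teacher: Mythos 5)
Your proof is correct and follows essentially the same route as the paper's: induction on $|\Spechi(D)|$, splitting into the case of several dependence classes (handled by the standard decomposition as a Jaffard family) and the case of a single class (handled by quotienting by the divided prime lying below all maximal ideals and by the split exact sequence of value groups); you even supply the verification that this prime is divided, which the paper leaves implicit. The one slip to fix is the definition $P_0:=\bigcap_{M\in\Max(D)}M$: this is the Jacobson radical, which is not prime once $D$ has two incomparable maximal ideals (consider $xy$ with $x\in M_1\setminus M_2$ and $y\in M_2\setminus M_1$), so what you need is $P_0:=\inf\Max(D)$ in the poset of prime ideals, i.e.\ the union of the chain of primes contained in every maximal ideal; with that reading, your primality-based argument that $P_0$ is divided and your appeal to Lemma \ref{lemma:branchpoint-max} both go through.
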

\begin{proof}
If each $\Inv(D_M)$ is free, the claim follows from Corollary \ref{cor:prufer-locfin}.

Suppose that $\Inv(D)$ is free: we have to prove that each $\Inv(D_M)$ is free. Since $D$ is semilocal, $\Spechi(D)$ is finite by Lemma \ref{lemma:Spechi-semiloc}. We proceed by induction on its cardinality.

If $|\Spechi(D)|=1$ then $D$ is a field, and $\Inv(D)=\Gamma(D)$ is trivial. If $|\Spechi(D)|=2$ then $D$ is a valuation domain and the claim follows from the fact that $\Inv(D)\simeq\Gamma(D)$.

Suppose now that the claim holds up to $n-1$. Consider the standard decomposition $\Theta$ of $D$; since $D$ is semilocal, $\Theta$ is a Jaffard family of $D$ and thus $\Inv(D)\simeq\bigoplus\{\Inv(T)\mid T\in\Theta\}$.

If $|\Theta|>1$, then $|\Spechi(T)|<|\Spechi(D)|$ for every $T\in\Theta$, and thus the inductive hypothesis applies to each $T$. Therefore, each $\Inv(T)$ and thus by induction $\Gamma(T_N)$ is free for every $T\in\Theta$ and every $N\in\Max(T)$. However, the set of these $T_N$ is just the set of all $D_M$ as $M$ ranges in $\Max(D)$; thus each $\Inv(D_M)$ is free.

If $|\Theta|=1$, then $P:=\inf\Max(D)\in\Spechi(D)$ is nonzero; since $P$ is contained in every maximal ideal of $D$, moreover, $P$ is divided. Since $\Gamma(D_P)$ is free by hypothesis, by Corollary \ref{cor:divided-invstar:free} $\Inv(D/P)$ is free. However, $|\Spechi(D/P)|=|\Spechi(D)|-1$, since the elements of $\Spechi(D/P)$ are exactly the quotients of the nonzero elements of $\Spechi(D)$; hence, $\Gamma((D/P)_N)$ is free for every $N\in\Max(D/P)$. For every maximal ideal $M$ of $D$, we have $(D/P)_{M/P}\simeq D_M/PD_M$; hence, we have an exact sequence
\begin{equation*}
0\longrightarrow\Gamma((D/P)_{M/P})\longrightarrow\Gamma(D_M)\longrightarrow\Gamma(D_P)\longrightarrow 0,
\end{equation*}
that splits since $\Gamma(D_P)$ is free. Thus $\Gamma(D_M)$ is free, and the statement follows. By induction, the claim holds for every semilocal Pr\"ufer domain.
\end{proof}

\begin{cor}\label{cor:cutbranch-locfin-inv}
Let $D$ be Pr\"ufer domain that is finite-dimensional and locally finite. Suppose that $\Gamma(D_P)$ is free for every $P\in\Spechi(D)\setminus\Max(D)$. Then, $\Inv(D)$ is free if and only if $\Gamma(D_M)$ is free for every $M\in\Max(D)$.
\end{cor}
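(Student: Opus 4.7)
The ``if'' direction is immediate from Corollary \ref{cor:prufer-locfin}, so the plan is to handle the converse by reducing to the semilocal case already settled in Proposition \ref{prop:cutbranch-semiloc-inv}. The vehicle for the reduction is the standard decomposition $\Theta=\{T(\Delta)\}$ of $D$, where $\Delta$ ranges over the dependence classes of $\Max(D)$; what has to be established is that $\Theta$ is a Jaffard family of $D$ whose members are semilocal Pr\"ufer domains, after which everything assembles from Proposition \ref{prop:jaffard} and Proposition \ref{prop:cutbranch-semiloc-inv}.

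The first key step is to show that each $T(\Delta)$ is semilocal. Since $D$ is finite-dimensional, the maximal ideals in the dependence class $\Delta$ share a common height-$1$ prime $P_\Delta\neq(0)$; picking any nonzero $x\in P_\Delta$, local finiteness of $D$ forces $x$ to belong to only finitely many maximal ideals, so $\Delta$ itself is finite. The second key step is the Jaffard family verification for $\Theta$: completeness and independence are standard properties of the standard decomposition of a Pr\"ufer domain (independence uses that primes surviving in distinct $T(\Delta_i)$ would witness dependence of maximal ideals in different classes), and local finiteness of $\Theta$ reduces to observing that $xT(\Delta)=T(\Delta)$ iff no $P\in\Delta$ contains $x$, so by local finiteness of $D$ only finitely many $\Delta$ satisfy $xT(\Delta)\neq T(\Delta)$. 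Proposition \ref{prop:jaffard} then gives $\Inv(D)\simeq\bigoplus_{\Delta}\Inv(T(\Delta))$, reducing freeness of $\Inv(D)$ to freeness of each $\Inv(T(\Delta))$.

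At this point I would apply Proposition \ref{prop:cutbranch-semiloc-inv} to each semilocal $T(\Delta)$. Because $T(\Delta)$ is a flat overring, its prime spectrum injects into $\Spec(D)$ via $Q'\mapsto Q'\cap D$, with $(T(\Delta))_{Q'}=D_{Q'\cap D}$, and the map preserves the tree structure, so elements of $\Spechi(T(\Delta))\setminus\Max(T(\Delta))$ correspond to elements of $\Spechi(D)\setminus\Max(D)$. The hypothesis on $D$ therefore provides the input needed for Proposition \ref{prop:cutbranch-semiloc-inv} on $T(\Delta)$, giving that $\Inv(T(\Delta))$ is free iff $\Gamma(D_M)$ is free for every $M\in\Delta$. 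Summing the equivalences over $\Delta$ yields the statement.

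The main obstacle is the Jaffard family verification for $\Theta$, and specifically the semilocality of each $T(\Delta)$: this is exactly where \emph{both} hypotheses (finite-dimensionality, to supply the common height-$1$ prime, and local finiteness, to bound each dependence class) are used together. Once $\Theta$ is in hand as a Jaffard family of semilocal members, the transfer of the branching-point hypothesis from $D$ to each $T(\Delta)$ is a routine bookkeeping check on flat overrings, and the conclusion follows.
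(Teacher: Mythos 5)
Your proposal is correct and follows essentially the same route as the paper: decompose $D$ via its standard decomposition (indexed equivalently by dependence classes or by height-one primes), verify it is a Jaffard family of semilocal Pr\"ufer domains using finite-dimensionality and local finiteness, and then apply Proposition \ref{prop:jaffard} together with Proposition \ref{prop:cutbranch-semiloc-inv} to each summand. The only difference is that you spell out the semilocality of each $T(\Delta)$ and the transfer of the $\Spechi$ hypothesis, details the paper leaves implicit or delegates to the cited properties of standard decompositions.
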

\begin{proof}
For every $P\in X^1(D)$, let $T(P):=\bigcap\{D_Q\mid Q\in V(P)\}$. Then, $\Theta:=\{T(P)\mid P\in X^1(D)\}$ is the standard decomposition of $D$, and it is a Jaffard family of $D$ since $D$ is locally finite and finite-dimensional. Thus $\Inv(D)\simeq\bigoplus\{\Inv(T)\mid T\in\Theta\}$. Moreover, each $T\in\Theta$ is semilocal, and thus by Proposition \ref{prop:cutbranch-semiloc-inv} $\Inv(T)$ is free if and only if $\Gamma(T_N)$ is free for every $N\in\Max(T)$. Putting all together, it follows that $\Inv(D)$ is free if and only if each $\Gamma(D_M)$ is free, as claimed.
\end{proof}

We now turn to study the group $\Div(D)$: the first result deals with valuation domains.
\begin{prop}\label{prop:div-val-b}
Let $V$ be a valuation domain with maximal ideal $M$. Suppose that $M$ is branched, and let $P$ be the prime ideal directly below $M$. Then, the following hold.
\begin{enumerate}[(a)]
\item\label{prop:div-val-b:fg} If $M$ is finitely generated, then $\Div(V)\simeq\Gamma(V)$.
\item\label{prop:div-val-b:nfg} If $M$ is not finitely generated, then $\Div(V)\simeq\insR\oplus\Gamma(V_P)$. In particular, $\Div(V)$ is not free.
\end{enumerate}
\end{prop}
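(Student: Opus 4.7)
The plan is to separate the two cases and apply different arguments.

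For (a), since $V$ is local, $\Inv(V)=\insprinc(V)\simeq\Gamma(V)$, so it suffices to prove $\Div(V)=\Inv(V)$. The key observation is that $M$, being principal, is invertible and hence divisorial, so $M^v=M\neq V$. Given any $v$-invertible ideal $I$, we have $(II^{-1})^v=V$; but $II^{-1}$ is an integral ideal of the local ring $V$, hence either equals $V$ or is contained in $M$, and in the latter case we would get $(II^{-1})^v\subseteq M^v=M\neq V$, a contradiction. Hence $II^{-1}=V$, showing that $I$ is invertible.

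For (b), I would use the exact sequence from Proposition \ref{prop:divided-invstar}. Since $V$ is a valuation domain the prime $P$ is divided, and taking $\star=v$ (so that the induced operation on $V/P$ is again $v$, as in Corollary \ref{cor:divided-invstar:free}) yields
\begin{equation*}
0\longrightarrow\Div(V/P)\longrightarrow\Div(V)\longrightarrow\Gamma(V_P)\longrightarrow 0.
\end{equation*}
Since $P$ is directly below $M$, the quotient $W:=V/P$ is a one-dimensional valuation domain whose value group is the smallest nontrivial convex subgroup $H$ of $\Gamma(V)$, which is archimedean and hence embeds in $\insR$. Because $M$ is not finitely generated, neither is $M/P$, so $H$ has no smallest positive element and is therefore dense in $\insR$; in particular $W$ is not a DVR.

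The crux of the argument—and the main obstacle—is to establish that $\Div(W)\simeq\insR$. For this, I would classify the nonzero fractional ideals of $W$ via cuts in $H$: each is either principal, $xW$ with $v(x)=r\in H$, or of the form $\{x\in K : v(x)>r\}=\{x : v(x)\geq r\}$ for some $r\in\insR\setminus H$. Principal ideals are trivially divisorial and $v$-invertible. For a non-principal cut ideal $I=\{v>r\}$, a direct computation yields $I^v=I$, and $II^{-1}$ is (up to $v$-closure) the maximal ideal of $W$; since $W$ is not a DVR, the maximal ideal is not divisorial, giving $(II^{-1})^v=W$, so $I$ is $v$-invertible. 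One further checks that the $v$-product corresponds to addition, producing an isomorphism $\Div(W)\simeq\insR$. The exact sequence then becomes $0\to\insR\to\Div(V)\to\Gamma(V_P)\to 0$, which splits because $\insR$ is divisible and hence injective as an abelian group, giving $\Div(V)\simeq\insR\oplus\Gamma(V_P)$. Non-freeness follows at once, since a free abelian group contains no nontrivial divisible subgroup.
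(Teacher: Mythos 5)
Your proof is correct and, for part (b), follows the same route as the paper: the identical exact sequence from Proposition \ref{prop:divided-invstar} with $\star=v$, and the identical splitting argument via the divisibility (hence injectivity) of $\insR$. The genuine difference is one of self-containedness: the paper disposes of both key inputs by citation to the literature on $v$-class groups of valuation domains (that every $v$-invertible $v$-ideal is principal when $M$ is principal, and that $\Div(V/P)\simeq\insR$ when $M$ is not finitely generated), whereas you prove the first directly and sketch the second. Your argument for (a) is clean and complete ($M$ principal is divisorial, so a proper integral ideal cannot have $v$-closure equal to $V$). In the sketch for (b) there is one small imprecision: your list of nonzero fractional ideals of $W:=V/P$ omits those of the form $xM_W$ with $v(x)\in H$, where $M_W$ is the maximal ideal of $W$; when $H$ is dense these are genuinely distinct from both the principal ideals and the cut ideals. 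This does not affect the conclusion, since $(W:M_W)=W$ for dense $H$, so $M_W$ (hence $xM_W$) is not divisorial and contributes nothing to $\Div(W)$; the divisorial $v$-invertible ideals are exactly the ones you describe, and the cut-value map does give $\Div(W)\simeq\insR$. You also correctly verify the hypotheses of Proposition \ref{prop:divided-invstar} (every prime of a valuation domain is divided, and $P$ is non-maximal), a point the paper leaves implicit.
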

\begin{proof}
\ref{prop:div-val-b:fg} By \cite[Corollary 3.6]{afz_vclass}, if $M$ is principal then every $v$-invertible $v$-ideal is principal. Thus $\Div(V)=\Inv(V)\simeq\Gamma(V)$, as claimed.

\ref{prop:div-val-b:nfg} By Proposition \ref{prop:divided-invstar}, we have an exact sequence
\begin{equation*}
0\longrightarrow\Div(V/P)\longrightarrow\Div(V)\longrightarrow\Gamma(V_P)\longrightarrow 0.
\end{equation*}
Since $M$ is not finitely generated, then by the proof of \cite[Theorem 2.7]{afz_vclass} $\Div(V/P)\simeq\insR$. In particular, it is a divisible group, and thus the above exact sequence splits; it follows that $\Div(V)\simeq\insR\oplus\Gamma(V_P)$. Since $\insR$ is not free, neither is $\Div(V)$.
\end{proof}

The proof of the following proposition follows a path similar to the proof of Proposition \ref{prop:cutbranch-semiloc-inv}.
\begin{prop}\label{prop:cutbranch-semiloc-div}
Let $D$ be a semilocal Pr\"ufer domain. Suppose that every maximal ideal is branched and that $\Gamma(D_P)$ is free for every $P\in\Spechi(D)\setminus\Max(D)$. Then, $\Div(D)$ is free if and only if every maximal ideal of $D$ is finitely generated.
\end{prop}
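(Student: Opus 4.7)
The plan is to mirror the induction used for $\Inv(D)$ in Proposition \ref{prop:cutbranch-semiloc-inv}, proceeding by induction on $|\Spechi(D)|$ (which is finite by Lemma \ref{lemma:Spechi-semiloc}), with $\Div$ in place of $\Inv$ and with Proposition \ref{prop:div-val-b} replacing the elementary identification $\Inv(V)\simeq\Gamma(V)$ as the base case.

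For the base case $|\Spechi(D)|=2$, the domain $D$ is a semilocal Pr\"ufer domain with a single maximal ideal and no branching point, hence a valuation domain $V$ whose (branched) maximal ideal $M$ sits directly above some prime $P$. Proposition \ref{prop:div-val-b} gives two alternatives: if $M$ is not finitely generated then $\Div(V)\simeq\insR\oplus\Gamma(V_P)$, which is not free because $\insR$ is divisible torsion-free and not free; and if $M$ is finitely generated then $\Div(V)\simeq\Gamma(V)$, and the extension $0\to\insZ\to\Gamma(V)\to\Gamma(V_P)\to 0$ splits since $\Gamma(V_P)$ is free by hypothesis, so $\Div(V)$ is free.

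For the inductive step, I would distinguish the two cases of the standard decomposition $\Theta$, just as in Proposition \ref{prop:cutbranch-semiloc-inv}. If $|\Theta|>1$, then $\Theta$ is a Jaffard family and Proposition \ref{prop:jaffard}(c) gives $\Div(D)\simeq\bigoplus_{T\in\Theta}\Div(T)$. Each $T$ is a semilocal Pr\"ufer domain with $|\Spechi(T)|<|\Spechi(D)|$, and the hypotheses (branched maximals, free $\Gamma(D_P)$ at non-maximal points of $\Spechi$) are inherited from $D$ via the identifications $T_{QT}=D_Q$. Induction then concludes, since the correspondence $\Max(T)\leftrightarrow\Delta\subseteq\Max(D)$ preserves ``finitely generated''. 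If $|\Theta|=1$, then $P:=\inf\Max(D)$ is a nonzero branching point (using the tree structure of $\Spec(D)$ together with Lemma \ref{lemma:branchpoint-max}), lies in $\Spechi(D)\setminus\Max(D)$, and is divided because $P\subseteq\Jac(D)$ in a Pr\"ufer domain. Hence $\Gamma(D_P)$ is free by hypothesis, and Corollary \ref{cor:divided-invstar:free}(b) reduces the question to whether $\Div(D/P)$ is free. Now $D/P$ is a semilocal Pr\"ufer domain with $|\Spechi(D/P)|=|\Spechi(D)|-1$, its maximal ideals $M/P$ remain branched (the prime directly below $M$ in $D$ either equals $P$ or descends to a prime directly below $M/P$), and the free-$\Gamma$ hypothesis transfers because, for $Q\supsetneq P$, the valuation domain $(D/P)_{Q/P}\simeq D_Q/P$ has value group isomorphic to a subgroup of the free group $\Gamma(D_Q)$. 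Induction applies.

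The main bookkeeping obstacle is verifying that ``$M$ finitely generated in $D$'' corresponds exactly to ``$M/P$ finitely generated in $D/P$'' in the divided case. In a Pr\"ufer domain, finite generation of a maximal ideal is equivalent to principality after localization, so this reduces to the following statement about the valuation domain $D_M$: if $P\subsetneq M$ with $P$ divided, then $MD_M$ is principal iff $MD_M/P$ is principal. This is immediate because any lift $x\in D_M$ of a generator of $MD_M/P$ satisfies $MD_M=xD_M+P$, and since $D_M$ is a valuation domain with $P\subsetneq M$ one has $P\subseteq xD_M$, giving $MD_M=xD_M$. Collecting all cases then yields that $\Div(D)$ is free if and only if every maximal ideal of $D$ is finitely generated.
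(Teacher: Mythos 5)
Your argument tracks the paper's proof almost line for line: induction on $|\Spechi(D)|$ (finite by Lemma \ref{lemma:Spechi-semiloc}), the standard decomposition together with Proposition \ref{prop:jaffard} when $|\Theta|>1$, and the quotient by the divided prime $P=\inf\Max(D)$ via Corollary \ref{cor:divided-invstar:free} when $|\Theta|=1$. Your bookkeeping for transporting ``finitely generated'' is also fine: where the paper cites an external lemma on Jaffard overrings, you argue through locally principal ideals in a semilocal domain, and your verification that $MD_M$ is principal if and only if $MD_M/PD_M$ is principal is exactly the fact the paper asserts without proof in the divided case.

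The one place you go beyond the paper is the base case $|\Spechi(D)|=2$, and there your added step does not follow from the stated hypotheses. You split $0\to\insZ\to\Gamma(V)\to\Gamma(V_P)\to 0$ using ``$\Gamma(V_P)$ is free by hypothesis'', but the hypothesis only concerns primes in $\Spechi(V)\setminus\Max(V)$, and a valuation domain has no branching points, so $\Spechi(V)=\{(0),M\}$ and the only prime covered is $(0)$; the prime $P$ directly below $M$ is not covered once $\dim V\geq 2$. This is not merely presentational: a rank-two valuation domain with value group $\insQ\oplus\insZ$ ordered lexicographically has a branched, principal maximal ideal and satisfies the stated hypotheses vacuously, yet $\Div(V)\simeq\Gamma(V)$ is not free. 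To be fair, the paper's own base case (``the claim now follows from Proposition \ref{prop:div-val-b}'') is silent on why $\Gamma(V)$ should be free when $M$ is finitely generated, so your proof is no weaker than the published one; but your explicit appeal to the hypothesis exposes the gap, and closing it requires freeness of $\Gamma(D_Q)$ for more primes $Q$ than those in $\Spechi(D)$ (for instance for all nonmaximal primes, which is precisely what your splitting argument would then legitimately use).
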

\begin{proof}
Since $D$ is semilocal, $\Spechi(D)$ is finite by Lemma \ref{lemma:Spechi-semiloc}. We proceed by induction on its cardinality.

If $|\Spechi(D)|=1$ then $D$ is a field, $\Div(D)$ is trivial and its maximal ideal is the zero ideal, which is finitely generated. If $|\Spechi(D)|=2$ then $D$ is a valuation domain; the claim now follows from Proposition \ref{prop:div-val-b}.

Suppose now that the claim holds up to $n-1$. Consider the standard decomposition $\Theta$ of $D$; since $D$ is semilocal, $\Theta$ is a Jaffard family of $D$ and thus $\Div(D)\simeq\bigoplus\{\Div(T)\mid T\in\Theta\}$.

If $|\Theta|>1$, then $|\Spechi(T)|<|\Spechi(D)|$, and thus the inductive hypothesis applies to each $T$. If every maximal ideal of $D$ is finitely generated, so is every maximal ideal of $T$: thus each $\Div(T)$ is free and $\Div(D)$ is free. Conversely, if $\Div(D)$ is free then each $\Div(T)$ is free, and so every maximal ideal of $T$ is finitely generated. If $M\in\Max(D)$, then $M=MT\cap D$ for some $T\in\Theta$; by \cite[Lemma 5.9]{starloc}, since $T$ is a Jaffard overring of $D$ and $MT$ is finitely generated also $M$ is finitely generated, as claimed.

If $|\Theta|=1$, then $P:=\inf\Max(D)\in\Spechi(D)$ is nonzero; since $P$ is contained in every maximal ideal of $D$, moreover, $P$ is divided. Since $\Gamma(D_P)$ is free by hypothesis, by Corollary \ref{cor:divided-invstar:free} $\Div(D)$ is free if and only if $\Div(D/P)$ is free. However, $|\Spechi(D/P)|=|\Spechi(D)|-1$, since the elements of $\Spechi(D/P)$ are exactly the quotients of the nonzero elements of $\Spechi(D)$; hence, $\Div(D/P)$ is free if and only if all the maximal ideals of $D/P$ are finitely generated. Since every maximal ideal $M$ of $D$ contains $P$, we have that $M$ is finitely generated if and only $M/P$ is finitely generated; thus $\Div(D/P)$ is free if and only if every maximal ideal of $D$ is finitely generated. The claim is proved.

By induction, the equivalence holds for every semilocal Pr\"ufer domain.
\end{proof}

\begin{cor}
Let $D$ be Pr\"ufer domain that is finite-dimensional and locally finite. Suppose that every maximal ideal is branched and that $\Gamma(D_P)$ is free for every $P\in\Spechi(D)\setminus\Max(D)$. Then, $\Div(D)$ is free if and only if every maximal ideal is finitely generated.
\end{cor}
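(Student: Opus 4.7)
The plan is to mirror the proof of Corollary \ref{cor:cutbranch-locfin-inv}, using the standard decomposition to reduce to the semilocal case handled by Proposition \ref{prop:cutbranch-semiloc-div}. Concretely, for each height-one prime $P\in X^1(D)$, set $T(P):=\bigcap\{D_Q\mid Q\in V(P)\}$, and let $\Theta:=\{T(P)\mid P\in X^1(D)\}$ be the standard decomposition. Since $D$ is finite-dimensional and locally finite, $\Theta$ is a Jaffard family of $D$, so by Proposition \ref{prop:jaffard}(c) we obtain
\begin{equation*}
\Div(D)\simeq\bigoplus_{T\in\Theta}\Div(T).
\end{equation*}

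Next I would verify that each $T\in\Theta$ satisfies the hypotheses of Proposition \ref{prop:cutbranch-semiloc-div}. The key observation is that $T=T(P)$ is semilocal: picking any nonzero $p\in P$, local finiteness of $D$ forces only finitely many maximal ideals of $D$ to contain $p$, hence only finitely many to contain $P$, and these are in bijection with $\Max(T)$. The tree structure of $\Spec(T)$ is identified with $V(P)\subseteq\Spec(D)$ (together with the zero ideal of $T$), and for any prime $\mathfrak{q}$ of $T$ with contraction $Q$ to $D$ we have $T_\mathfrak{q}=D_Q$; therefore branchedness of maximal ideals and freeness of value groups at the elements of $\Spechi(T)\setminus\Max(T)$ follow directly from the corresponding hypotheses on $D$ (for the zero ideal of $T$ the value group is trivial and hence free). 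Applying Proposition \ref{prop:cutbranch-semiloc-div} to each $T\in\Theta$, we conclude that $\Div(T)$ is free if and only if every maximal ideal of $T$ is finitely generated.

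Finally I would translate the finite generation condition between $D$ and the members of $\Theta$. Every $M\in\Max(D)$ is of the form $M=MT\cap D$ for a unique $T\in\Theta$ and $MT\in\Max(T)$; conversely every maximal ideal of $T$ arises this way. Since $T$ is a Jaffard overring of $D$, by \cite[Lemma 5.9]{starloc} (as already invoked in the proof of Proposition \ref{prop:cutbranch-semiloc-div}) we have that $M$ is finitely generated over $D$ if and only if $MT$ is finitely generated over $T$. Combining this with the decomposition of $\Div(D)$ yields the stated equivalence.

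The only mildly delicate step is the transfer of hypotheses to the members of the standard decomposition; once one checks that each $T(P)$ is semilocal via local finiteness, and that $\Spechi(T(P))\setminus\Max(T(P))$ consists (apart from the zero ideal) of branching primes of $D$ lying above $P$ for which $T(P)_\mathfrak{q}=D_Q$, everything reduces cleanly to the semilocal statement.
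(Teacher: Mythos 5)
Your proposal is correct and follows essentially the same route as the paper's proof: decompose via the standard decomposition $\Theta=\{T(P)\mid P\in X^1(D)\}$, which is a Jaffard family by local finiteness and finite-dimensionality, and reduce to the semilocal case, transferring finite generation of maximal ideals along the Jaffard overrings. The paper's version is terser (it does not spell out the verification that each $T(P)$ is semilocal or that the hypotheses descend to $T(P)$), but the substance is identical.
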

\begin{proof}
Let $\Theta$ be the family defined in the proof of Corollary \ref{cor:cutbranch-locfin-inv}; then, $\Div(D)$ is free if and only if $\Div(T)$ is free for every $T\in\Theta$. Since each $T\in\Theta$ is a Jaffard overring, $N\in\Max(T)$ is finitely generated if and only if $N\cap D\in\Max(D)$ is finitely generated; using Proposition \ref{prop:cutbranch-semiloc-div} we see that $\Div(D)$ is free if and only if each maximal ideal is finitely generated, as claimed.
\end{proof}

Proposition \ref{prop:divided-invstar} can also be used to extend results proved by other means. For example, suppose that $D$ is a strongly discrete $2$-dimensional Pr\"ufer domain with a single height-one prime ideal $P$. Then, $\Gamma(D_P)\simeq\insZ$ is free, while $D/P$ is an almost Dedekind domain and thus $\Inv(D/P)$ is free by \cite[Proposition 5.3]{bounded-almded}. Using quotients (i.e., Proposition \ref{prop:divided-invstar}) we have that also $\Inv(D)$ is free. Extending this reasoning, we have the following result, where $X^k(D)$ indicates the set of prime ideals of $D$ of height $k$.
\begin{prop}
Let $D$ be a strongly discrete Pr\"ufer domain of dimension $d<\infty$. If $\Spec(D)\setminus X^d(D)$ is finite, then $\Inv(D)$ is free.
\end{prop}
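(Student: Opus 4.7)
My plan is to induct on the dimension $d$, combining the decomposition along height-one primes (Proposition \ref{prop:oplus-tcomplete}) with the divided-prime reduction via Corollary \ref{cor:divided-invstar:free} that is already illustrated in the $2$-dimensional example immediately preceding the statement.

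The base case $d=1$ is immediate: the hypothesis on $\Spec(D)\setminus X^d(D)$ is vacuous, and a strongly discrete Pr\"ufer domain of dimension one is almost Dedekind, so $\Inv(D)$ is free by the results on almost Dedekind domains recalled in the introduction.

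For the inductive step ($d\geq 2$), I would first split $D$ along its height-one primes. Since $X^1(D)\subseteq\Spec(D)\setminus X^d(D)$ is finite, write $X^1(D)=\{P_1,\ldots,P_n\}$ and set $T_i:=\bigcap\{D_M:M\in\Max(D),\ M\supseteq P_i\}$. Because $\Spec(D)$ is a tree, each nonzero prime has a unique height-one prime below it, so the classes $\Delta_i:=\{M\in\Max(D):M\supseteq P_i\}$ partition $\Max(D)$ and $D=\bigcap_i T_i$. The family $\{T_1,\ldots,T_n\}$ is finite and hence trivially locally finite, so Proposition \ref{prop:oplus-tcomplete} reduces the problem to showing that each $\Inv(T_i)$ is free. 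Next, I would apply Corollary \ref{cor:divided-invstar:free}(a) to each $T_i$. The overring $T_i$ is a strongly discrete Pr\"ufer domain whose spectrum embeds into $\Spec(D)$ with heights preserved; in particular it has a unique height-one prime $\widetilde P_i:=P_iT_i$, which is contained in every maximal ideal of $T_i$ and therefore divided (as in the proof of Proposition \ref{prop:cutbranch-semiloc-inv}). The localization $(T_i)_{\widetilde P_i}=D_{P_i}$ is a DVR by strong discreteness, so $\Gamma((T_i)_{\widetilde P_i})\simeq\insZ$ is free, and Corollary \ref{cor:divided-invstar:free}(a) gives that $\Inv(T_i)$ is free if and only if $\Inv(T_i/\widetilde P_i)$ is. The quotient is a strongly discrete Pr\"ufer domain of dimension $d_i-1<d$ whose non-maximal-height spectrum embeds into $\Spec(T_i)\setminus X^{d_i}(T_i)\subseteq\Spec(D)\setminus X^d(D)$ and is therefore finite, so the inductive hypothesis closes the argument.

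The main obstacle I anticipate is not conceptual but the routine bookkeeping needed to check that the three hypotheses \emph{strongly discrete}, \emph{finite-dimensional Pr\"ufer}, and \emph{$\Spec\setminus X^{\dim}$ finite} propagate through both the overring $T_i$ and the quotient $T_i/\widetilde P_i$. The enabling facts are that overrings of strongly discrete Pr\"ufer domains are strongly discrete, that idempotency of a prime ideal of a Pr\"ufer domain is a local condition and hence survives passage to quotients, and that heights are preserved when moving to an overring whose spectrum sits inside $\Spec(D)$.
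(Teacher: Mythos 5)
Your argument is essentially the paper's own proof: both induct on $d$, decompose $D$ along its finitely many height-one primes into the (finite, hence Jaffard/locally finite) standard decomposition $\{T_i\}$, and then reduce each $T_i$ modulo its unique height-one prime, which is divided and localizes to a DVR by strong discreteness, via Corollary \ref{cor:divided-invstar:free}. The only point to add is the degenerate case where some $P_i$ is itself maximal, so that $T_i=D_{P_i}$ is a DVR and $\widetilde P_i$ is not a \emph{non-maximal} divided prime as Corollary \ref{cor:divided-invstar:free} requires; there one concludes directly that $\Inv(T_i)\simeq\insZ$ is free, exactly as the paper does.
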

\begin{proof}
If $d=1$, then $D$ is an almost Dedekind domain, and the claim follows from \cite[Proposition 5.3]{bounded-almded}. Suppose that $d>1$ and that the claim is true up to dimension $d-1$, and let $\Theta$ be the standard decomposition of $D$. Since $D$ is finite-dimensional, $\Theta$ is in bijective correspondence with the elements of $X^1(D)\subseteq\Spec(D)\setminus X^d(D)$, and thus it is finite; hence $\Theta$ is a Jaffard family of $D$ and $\Inv(D)\simeq\bigoplus\{\Inv(T)\mid T\in\Theta\}$.

Any $T\in\Theta$ has a unique height-1 prime ideal, say $P$; then, $D_P$ is a DVR since $D$ is strongly discrete. If $P$ is maximal, then $T=D_P$ and $\Inv(D_P)\simeq\Gamma(D_P)\simeq\insZ$ is free. If $P$ is not maximal, then $P$ is divided; moreover, $D/P$ is a strongly discrete Pr\"ufer domain of dimension $d-1$, and $\Spec(D/P)\setminus X^{d-1}(D/P)$ is finite since it is in bijective correspondence with $V(P)\setminus X^d(D)\subseteq\Spec(D)\setminus X^d(D)$. By hypothesis, $\Inv(D/P)$ is free; by Corollary \ref{cor:divided-invstar:free}, it follows that $\Inv(D)$ is free. By induction, the claim is proved.
\end{proof}

\begin{oss}
The condition in the statement is stronger than requiring that $\Spec(D)\setminus\Max(D)$ is finite, since there may be maximal ideals of height lower than $d$. If we only required $|\Spec(D)\setminus\Max(D)|<\infty$, we would not be able to prove that the standard decomposition is finite, as there may be infinitely many maximal ideals of height $1$.
\end{oss}

The previous proposition and the earlier Corollary \ref{cor:val-stronglydisc} suggest the following
\begin{conj}\label{conj:stronglydiscrete}
If $D$ is a strongly discrete Pr\"ufer domain, then $\Inv(D)$ is free.
\end{conj}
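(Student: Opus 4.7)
The plan is to establish the conjecture first for strongly discrete valuation domains, and then bootstrap to the general Pr\"ufer case using the pre-Jaffard machinery of Sections~\ref{sect:locfin}--\ref{sect:prufer}.

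For the valuation case, let $V$ be a strongly discrete valuation domain with value group $\Gamma$. The condition $PV_P\neq(PV_P)^2$ at every nonzero prime $P$ translates into the statement that, for every convex subgroup $H\le\Gamma$, the quotient $\Gamma/H$ has a minimum positive element; equivalently, the convex subgroups of $\Gamma$ form a well-ordered chain $\{H_\alpha\}_{\alpha\le\kappa}$ with $H_0=0$, $H_\kappa=\Gamma$ and $H_{\alpha+1}/H_\alpha\simeq\insZ$ for every $\alpha$. A transfinite induction then produces a basis of $\Gamma$: at successor stages the extension
\begin{equation*}
0\longrightarrow H_\alpha\longrightarrow H_{\alpha+1}\longrightarrow\insZ\longrightarrow 0
\end{equation*}
splits because $\insZ$ is free, so one extends the existing basis by a chosen lift of a generator; at a limit ordinal $\lambda$ the coherent union of the previously built bases is a basis of $H_\lambda=\bigcup_{\alpha<\lambda}H_\alpha$. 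Hence $\Inv(V)\simeq\Gamma(V)$ is free.

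For a general strongly discrete Pr\"ufer $D$, each $D_M$ is a strongly discrete valuation domain, so $\Inv(D_M)\simeq\Gamma(D_M)$ is free by the previous step. The central strategy is to apply Proposition~\ref{prop:preJaff} to the pre-Jaffard family $\Theta:=\{D_M\mid M\in\Max(D)\}$: if $\Theta$ can be shown to be sharp, then
\begin{equation*}
\Inv(D)\simeq\bigoplus_{M\in\Max(D)}\Inv(D_M)
\end{equation*}
is a direct sum of free groups, hence free. As a complementary tool along the derived sequence $\{T_\alpha\}$, whenever a nonzero divided prime $P$ of $T_\alpha$ is available, Proposition~\ref{prop:divided-invstar} reduces $\Inv(T_\alpha)$ to $\Inv(T_\alpha/P)\oplus\Gamma((T_\alpha)_P)$, the second summand being free by the valuation case, while $T_\alpha/P$ remains strongly discrete Pr\"ufer. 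Coupling this with the standard decomposition on semilocal or Noetherian-spectrum pieces (in the spirit of Corollary~\ref{cor:cutbranch-locfin-inv}) would let one run a transfinite peeling of $\Spechi(D)$ that one hopes terminates at $K$.

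The hard part is exactly this termination, i.e., sharpness of $\Theta$, or equivalently the fact that the derived sequence reaches $K$ in some ordinal number of steps. Strong discreteness forbids idempotent primes at every localization, but it does not obviously prevent $\Max(D)^\inverse$ from containing a perfect closed subset, nor does it guarantee that the intermediate overrings $T_\alpha$ retain enough strong discreteness and finiteness to force the process to collapse. If some $T_\alpha\neq K$ stabilizes, Proposition~\ref{prop:preJaff} yields only a short exact sequence with $\Inv(T_\alpha)$ on the right, and the freeness problem reappears for a possibly equally wild overring. Reconciling the inverse-topological structure of $\Max(D)^\inverse$ with the purely algebraic strong discreteness hypothesis is where I expect the genuine obstacle to sit, and bridging the gap may require a new factorization principle tailored to strongly discrete Pr\"ufer domains rather than a direct assembly of the tools already at hand.
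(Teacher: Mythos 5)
You should first note that this statement is presented in the paper as a \emph{conjecture}: the paper contains no proof of it, only special cases (Corollary \ref{cor:val-stronglydisc} for locally finite strongly discrete Pr\"ufer domains, the unlabelled proposition on strongly discrete Pr\"ufer domains of finite dimension $d$ with $\Spec(D)\setminus X^d(D)$ finite, and Corollaries \ref{cor:almded-int} and \ref{cor:dedekind-int} for $\Int(D)$). Your proposal, as you yourself concede in its final paragraph, is not a proof either. What you do establish correctly is the valuation case: strong discreteness does force the chain of convex subgroups of $\Gamma(V)$ to be well ordered with successive quotients isomorphic to $\insZ$ (an infinite strictly descending chain of convex subgroups would have an intersection $H$ such that $\Gamma/H$ has no least positive element, i.e., an unbranched and hence idempotent prime), and the transfinite construction of a basis then goes through. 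This recovers, by a value-group argument, what the paper obtains ideal-theoretically in Proposition \ref{prop:val-nobranched} together with the first paragraph of the proof of Corollary \ref{cor:val-stronglydisc}; the paper's route is slightly more general, since it only needs each rank-one quotient $\Gamma(V_P/Q)$ to be free rather than equal to $\insZ$.

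The genuine gap is exactly where you locate it: the passage from the local data to $\Inv(D)$. Proposition \ref{prop:preJaff} yields $\Inv(D)\simeq\bigoplus_M\Inv(D_M)$ only when the pre-Jaffard family $\Theta=\{D_M\mid M\in\Max(D)\}$ is sharp, and nothing in the strong discreteness hypothesis is known to force sharpness; if the derived sequence stabilizes at some $T_\alpha\neq K$, the exact sequence \eqref{eq:exseq-Talpha} merely transfers the whole problem to $\Inv(T_\alpha)$, and the alternative peeling via divided primes and standard decompositions (Proposition \ref{prop:divided-invstar}, Proposition \ref{prop:cutbranch-semiloc-inv}, Corollary \ref{cor:cutbranch-locfin-inv}) requires semilocality, finite dimension or local finiteness that you have not supplied. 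Since this step is left open, the proposal is a reasonable program but not a proof; it is consistent with, and does not go beyond, what the paper already proves, and the conjecture remains open.
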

We shall see another case of this conjecture in the next section.

\section{Algebras}\label{sect:algebras}
Let $D\subseteq R$ be an extension of integral domains. Then, the inclusion map $D\longrightarrow R$ induces maps $\insprinc(D)\longrightarrow\insprinc(R)$ and $\Inv(D)\longrightarrow\Inv(R)$. It also induces a natural group homomorphism $\phi_p:\Pic(D)\longrightarrow\Pic(R)$ of their Picard groups. Following \cite{locpic}. we call the quotient $\Pic(R)/\phi_p(\Pic(D))$ the \emph{local Picard group} of the extension $D\subseteq R$, and we denote it by $\locpic(R,D)$.

In this section, we use these facts to study the group of invertible ideals of $R$. To simplify the notation, we use $F^\nz$ to denote the unit group of a field $F$.

\begin{teor}\label{teor:ext-retract}
Let $D$ be an integral domain with quotient field $K$, and let $R$ be a $D$-algebra that extends $D$ and is an integral domain; let $L$ be the quotient field of $R$. Suppose that:
\begin{itemize}
\item $\unit(D)=\unit(R)$;
\item the quotient $L^\nz/K^\nz$ is free;
\item $\locpic(R,D)$ and $\Inv(D)$ are free.
\end{itemize}
Then, $\Inv(R)$ is free.
\end{teor}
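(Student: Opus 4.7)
The plan is to assemble $\Inv(R)$ from the three free groups furnished by the hypotheses ($\Inv(D)$, $L^\nz/K^\nz$, $\locpic(R,D)$) via a chain of splittings.

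First, I would pin down $\insprinc(R)$. Applying the snake lemma to the commutative diagram with rows $0 \to \unit(D) \to K^\nz \to \insprinc(D) \to 0$ and $0 \to \unit(R) \to L^\nz \to \insprinc(R) \to 0$ connected by the inclusions, the hypothesis $\unit(D)=\unit(R)$ kills the leftmost vertical map, and I obtain an injection $\insprinc(D) \hookrightarrow \insprinc(R)$ with cokernel $L^\nz/K^\nz$. Since $L^\nz/K^\nz$ is free, this sequence splits, so $\insprinc(R) \simeq \insprinc(D) \oplus L^\nz/K^\nz$; and $\insprinc(D)$, being a subgroup of the free group $\Inv(D)$, is free. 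Hence $\insprinc(R)$ is free.

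Next, because $\locpic(R,D) = \Pic(R)/\phi_p(\Pic(D))$ is free, the exact sequence $0 \to G \to \Inv(R) \to \locpic(R,D) \to 0$ splits, where $G$ is the preimage of $\phi_p(\Pic(D))$ under $\Inv(R) \twoheadrightarrow \Pic(R)$. Thus $\Inv(R) \simeq G \oplus \locpic(R,D)$, and it suffices to prove that $G$ is free. I form the pullback $P := G \times_{\phi_p(\Pic(D))} \Inv(D)$, which fits in two short exact sequences
\[ 0 \to \insprinc(R) \to P \to \Inv(D) \to 0 \quad\text{and}\quad 0 \to H \to P \to G \to 0, \]
where $H := \phi^{-1}(\insprinc(R))$ is the kernel of the composition $\Inv(D) \to \Pic(D) \to \phi_p(\Pic(D))$. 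The first sequence splits by projectivity of $\Inv(D)$, giving $P \simeq \insprinc(R) \oplus \Inv(D)$, which is free.

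The crux is then to split $0 \to H \to P \to G \to 0$. Under the isomorphism $P \simeq \insprinc(R) \oplus \Inv(D)$, $H$ embeds as the graph $\{(\phi(J)^{-1}, J) : J \in H\}$ of the homomorphism $-\phi|_H \colon H \to \insprinc(R)$. To construct a complement, I would exploit the filtration $\insprinc(D) \subseteq H$ with quotient $H/\insprinc(D) \simeq \ker\phi_p$, combined with the snake lemma applied to the Picard diagram $0 \to \insprinc(-) \to \Inv(-) \to \Pic(-) \to 0$ for $D \to R$: this yields an exact sequence $0 \to \ker\phi \to \ker\phi_p \to L^\nz/K^\nz \to \coker\phi \to \locpic(R,D) \to 0$, realizing $\ker\phi_p/\ker\phi$ as a (free) subgroup of $L^\nz/K^\nz$. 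Combining this with the Step 1 decomposition $\insprinc(R) \simeq \insprinc(D) \oplus L^\nz/K^\nz$ and with compatible splittings coming from the freeness of $\Inv(D)$, I would produce a direct complement of the graph in $P$. The main obstacle will be arranging all of these splittings (from Step 1, from projectivity of $\Inv(D)$, and from the analysis of $\ker\phi_p$) to be mutually compatible; this is exactly the point where all four hypotheses must be used in concert. Once the splitting is established, $G$ is a direct summand of the free group $P$, hence free, and $\Inv(R) \simeq G \oplus \locpic(R,D)$ is free.
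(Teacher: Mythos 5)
Your first step coincides exactly with the paper's: the snake lemma applied to the units diagram yields the exact sequence $0\to\insprinc(D)\to\insprinc(R)\to L^\nz/K^\nz\to 0$, which splits. After that, however, your argument has a genuine gap at precisely the point you flag yourself. You reduce the theorem to splitting $0\to H\to P\to G\to 0$, where $G$ is the preimage of $\phi_p(\Pic(D))$ in $\Inv(R)$ and $P\simeq\insprinc(R)\oplus\Inv(D)$ is free; but splitting that sequence is \emph{equivalent} to $G$ being free (a splitting makes $G$ a direct summand of the free group $P$, and conversely $G$ free forces the splitting by projectivity), so nothing has been reduced --- the ``crux'' you defer is the whole theorem. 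Worse, the ingredients you propose to build the complement from are not controlled by the hypotheses: $G/\insprinc(R)\simeq\phi_p(\Pic(D))\simeq\Inv(D)/H$ and $\ker\phi_p$ are quotients of free groups (equivalently, subgroups of the unknown $\Pic(R)$), and none of the assumptions makes them free. Indeed, already for $R=D$ a Dedekind domain with $\Pic(D)\simeq\insZ/2\insZ$ (where all hypotheses hold trivially), $\phi_p(\Pic(D))$ has torsion; so no construction of the complement that factors through a splitting of $G\to\phi_p(\Pic(D))$ or of $\Inv(D)\to\Inv(D)/H$ can succeed, and the ``compatible splittings'' you invoke do not exist in general.

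The paper sidesteps all of this by working with the cokernel $G':=\coker(\Inv(D)\to\Inv(R))$ instead of your subgroup $G$: it assembles the $3\times 3$ diagram whose columns are the sequences $0\to\insprinc(-)\to\Inv(-)\to\Pic(-)\to 0$, and reads off the third column $0\to L^\nz/K^\nz\to G'\to\locpic(R,D)\to 0$. Thus $G'$ is an extension of two groups that are free \emph{by hypothesis}, hence free; the row $0\to\Inv(D)\to\Inv(R)\to G'\to 0$ then splits and $\Inv(R)\simeq\Inv(D)\oplus G'$ is free. You in fact write down the relevant snake-lemma sequence $\cdots\to L^\nz/K^\nz\to\coker\phi\to\locpic(R,D)\to 0$ in your final paragraph, but only as an auxiliary tool; it already exhibits $\coker\phi$ as such an extension and finishes the proof. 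The repair is therefore to discard the pullback $P$ and the subgroup $G$ entirely and run the argument on $\coker\phi$ directly.
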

\begin{proof}
Consider the commutative diagram
\begin{equation*}
\begin{tikzcd}
0\arrow[r] & \unit(D)\arrow[r]\arrow[d] & K^\nz\arrow[r]\arrow[d] & \insprinc(D)\arrow[r]\arrow[d] & 0\\
0\arrow[r] & \unit(R)\arrow[r] & L^\nz\arrow[r] & \insprinc(R)\arrow[r] & 0.
\end{tikzcd}
\end{equation*}
By hypothesis, the leftmost vertical map is an equality, while the middle vertical map is injective; from the snake lemma, it follows that the rightmost map $\insprinc(D)\longrightarrow\insprinc(R)$ is injective and that its cokernel is equal to the cokernel of $K^\nz\longrightarrow L^\nz$. Thus, we have an exact sequence
\begin{equation*}
0\longrightarrow \insprinc(D)\longrightarrow\insprinc(R)\longrightarrow L^\nz/K^\nz\longrightarrow 0,
\end{equation*}
that extends to a commutative diagram
\begin{equation*}
\begin{tikzcd}
& 0\arrow[d] & 0\arrow[d] & 0\arrow[d]\\
0\arrow[r] & \insprinc(D)\arrow[r]\arrow[d] & \insprinc(R)\arrow[r]\arrow[d] & L^\nz/K^\nz\arrow[r]\arrow[d] & 0\\
0\arrow[r] & \Inv(D)\arrow[r]\arrow[d] & \Inv(R)\arrow[r]\arrow[d] & G\arrow[r]\arrow[d] & 0\\
0\arrow[r] & \Pic(D)\arrow[r]\arrow[d] & \Pic(R)\arrow[r]\arrow[d] & \locpic(R,D)\arrow[r]\arrow[d] & 0\\
& 0 & 0 & 0
\end{tikzcd}
\end{equation*}
where $G$ is defined as the cokernel of the extension map $\Inv(D)\longrightarrow\Inv(R)$.

By hypothesis, $\locpic(R,D)$ is free; therefore, the rightmost column is a split exact sequence, and thus $G\simeq\locpic(R,D)\oplus L^\nz/K^\nz$ is free (using the hypothesis on $L^\nz/K^\nz$). Hence, also the middle row also splits. By hypothesis, $\Inv(D)$ is free; therefore, $\Inv(R)\simeq\Inv(D)\oplus G$ is free.
\end{proof}

We now show that, for rings of polynomials, the first two hypothesis of Theorem \ref{teor:ext-retract} often holds.
\begin{lemma}\label{lemma:quoz-unit-fields}
Let $K$ be a field, and let $L$ be a field containing $K$. If there is a Krull domain $R$ containing $K$ with quotient field $L$ such that $\unit(R)=K^\nz$, then $L^\nz/K^\nz$ is free. In particular, if $\XX$ is a set of indeterminates, then $K(\XX)^\nz/K^\nz$ is free.
\end{lemma}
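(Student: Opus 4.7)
The plan is to exploit the standard exact sequence relating units, nonzero elements and principal ideals, combined with the fact (already established in Proposition \ref{prop:krull}) that the principal-ideal group of a Krull domain is free.

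Concretely, for any integral domain $R$ with quotient field $L$ we have the short exact sequence
\begin{equation*}
0\longrightarrow\unit(R)\longrightarrow L^\nz\longrightarrow\insprinc(R)\longrightarrow 0,
\end{equation*}
where the last map sends $x\mapsto xR$. Substituting $\unit(R)=K^\nz$ (by hypothesis) turns this into
\begin{equation*}
0\longrightarrow K^\nz\longrightarrow L^\nz\longrightarrow\insprinc(R)\longrightarrow 0.
\end{equation*}
Since $R$ is Krull, Proposition \ref{prop:krull} says $\insprinc(R)$ is free; therefore the sequence splits, and we get $L^\nz/K^\nz\simeq\insprinc(R)$, which is free. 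This gives the main statement.

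For the ``in particular'' clause, I would take $R:=K[\XX]$, the polynomial ring over $K$ in the set of indeterminates $\XX$. This ring is a UFD (since any polynomial involves only finitely many variables, factorization reduces at once to the finite-variable case), hence a Krull domain; its quotient field is $K(\XX)$; and its units are exactly the nonzero constants, so $\unit(K[\XX])=K^\nz$. Applying the first part of the lemma to $R=K[\XX]$ and $L=K(\XX)$ then gives that $K(\XX)^\nz/K^\nz$ is free.

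There is essentially no obstacle: the argument is a one-line application of the snake-lemma-style exact sequence together with Proposition \ref{prop:krull} (which ensures splitting via projectivity of free groups). The only small point of care is verifying that $K[\XX]$ is Krull when $\XX$ is infinite, which is immediate from it being a UFD.
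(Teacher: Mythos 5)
Your argument is correct and is essentially identical to the paper's: both identify $L^\nz/K^\nz$ with $\insprinc(R)$ via the canonical surjection $L^\nz\to\insprinc(R)$ with kernel $\unit(R)=K^\nz$, invoke Proposition \ref{prop:krull} for freeness, and take $R=K[\XX]$ for the ``in particular'' clause. (The appeal to splitting is superfluous, since the isomorphism $L^\nz/K^\nz\simeq\insprinc(R)$ already follows from the first isomorphism theorem, but this does no harm.)
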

\begin{proof}
Let $\phi:L^\nz\longrightarrow\insprinc(R)$ be the canonical map: then, $\ker\phi=\unit(R)=K^\nz$. and thus $L^\nz/K^\nz\simeq\insprinc(D)$. Since $R$ is a Krull domain, $\insprinc(R)$ is free, and thus $L^\nz/K^\nz$ is free.

The last statement follows by considering $R=K[\XX]$.
\end{proof}

\begin{cor}
Let $D$ be an integral domain such that $\Inv(D)$ is free. Then:
\begin{enumerate}[(a)]
\item if $\locpic(D[X],D)$ is free, then $\Inv(D[X])$ is free;
\item if $D$ is seminormal, then $\Inv(D[X])$ is free.
\end{enumerate}
\end{cor}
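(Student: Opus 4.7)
The plan is to apply Theorem \ref{teor:ext-retract} to the extension $D\subseteq R:=D[X]$ and verify the three hypotheses in both cases.

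First I would verify the two hypotheses that hold unconditionally. The equality $\unit(D)=\unit(D[X])$ is standard for polynomial rings over a domain (a unit of $D[X]$ must have degree $0$). For the quotient fields, write $K$ for the quotient field of $D$; then the quotient field of $D[X]$ is $K(X)$, and Lemma \ref{lemma:quoz-unit-fields} (applied to the Krull domain $K[X]$, whose units are exactly $K^\nz$) gives that $K(X)^\nz/K^\nz$ is free. Combined with the hypothesis that $\Inv(D)$ is free, this reduces each part to showing that $\locpic(D[X],D)$ is free.

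For part (a) this is exactly the hypothesis, so Theorem \ref{teor:ext-retract} applies directly and yields that $\Inv(D[X])$ is free. For part (b), the key input is the Traverso--Swan theorem: an integral domain $D$ is seminormal if and only if the natural map $\Pic(D)\longrightarrow\Pic(D[X])$ is an isomorphism. Seminormality therefore forces $\locpic(D[X],D)=\Pic(D[X])/\phi_p(\Pic(D))=0$, which is trivially free; hence part (a) applies and gives the conclusion.

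There is no real obstacle: both parts are essentially immediate consequences of Theorem \ref{teor:ext-retract} once one identifies the external ingredient (Traverso--Swan) needed to control the local Picard group in the seminormal case. The only thing to be careful about is to invoke Traverso--Swan in the correct form, namely that seminormality is equivalent to $\Pic(D)\simeq\Pic(D[X])$ rather than just to injectivity or surjectivity of one direction.
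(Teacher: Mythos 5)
Your proposal is correct and follows exactly the paper's argument: part (a) is Theorem \ref{teor:ext-retract} combined with Lemma \ref{lemma:quoz-unit-fields} (plus the standard facts $\unit(D)=\unit(D[X])$ and that the quotient field of $D[X]$ is $K(X)$), and part (b) reduces to part (a) via the Traverso--Swan theorem, which is precisely the reference the paper cites for the triviality of $\locpic(D[X],D)$ in the seminormal case. No gaps.
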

\begin{proof}
The first point is a direct consequence of Theorem \ref{teor:ext-retract} and Lemma \ref{lemma:quoz-unit-fields}. The second one follows from the first one since $\locpic(D[X],D)$ is trivial if $D$ is seminormal \cite[Theorem 1.6]{pic-RX-seminormal}.
\end{proof}

The \emph{ring of integer-valued polynomials} over a domain $D$, denoted by $\Int(D)$, is the ring of all polynomials $f\in K[X]$ (where $K$ is the quotient field of $D$) such that $f(D)\subseteq D$. We say that $\Int(D)$ \emph{behaves well under localization} if $\Int(D_P)=(D\setminus P)^{-1}\Int(D)$ for every prime ideal $P$. See \cite{intD} for information about integer-valued polynomials.
\begin{cor}\label{cor:almded-int}
Let $D$ be an almost Dedekind domain such that $\Int(D)$ behaves well under localization and $\Max(D)$ is scattered when endowed with the inverse topology. Then, $\Inv(\Int(D))$ is free.
\end{cor}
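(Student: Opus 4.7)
My plan is to apply Theorem \ref{teor:ext-retract} to the natural inclusion $D \hookrightarrow R := \Int(D)$. The quotient fields are $K$ (for $D$) and $L = K(X)$ (for $\Int(D)$, since $\Int(D) \subseteq K[X]$ and contains $X$), so the four hypotheses of Theorem \ref{teor:ext-retract} to verify are: $\unit(\Int(D)) = \unit(D)$, the group $K(X)^\nz/K^\nz$ is free, $\Inv(D)$ is free, and $\locpic(\Int(D), D)$ is free.

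The first three of these are relatively routine. Every unit of $\Int(D)$ must be a unit of the overring $K[X]$, hence a nonzero constant; such a constant $c$ and its inverse $c^{-1}$ both lie in $\Int(D)$ precisely when $c \in \unit(D)$, giving $\unit(\Int(D)) = \unit(D)$. The freeness of $K(X)^\nz/K^\nz$ is immediate from Lemma \ref{lemma:quoz-unit-fields} applied to the Krull (in fact UFD) domain $K[X]$. Finally, $D$ being almost Dedekind makes each $D_M$ a DVR, so $\Inv(D_M) \simeq \insZ$; since $\Max(D)^\inverse$ is scattered, the corollary to Proposition \ref{prop:preJaff} gives $\Inv(D) \simeq \bigoplus_{M \in \Max(D)} \insZ$, which is free.

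The main obstacle is establishing that $\locpic(\Int(D), D)$ is free, and this is exactly where the hypothesis that $\Int(D)$ behaves well under localization enters. The strategy is to exploit the scattered structure of $\Max(D)^\inverse$ to run a decomposition argument for $\locpic(\Int(D), D)$ parallel to the one used in Proposition \ref{prop:preJaff} for $\Inv$. The pre-Jaffard structure on $\Int(D)$ given by the family $\{\Int(D_M)\}_{M \in \Max(D)}$ (which makes sense precisely because $\Int(D)$ behaves well under localization) inherits the scattered character of $\Max(D)^\inverse$, and then transfinite induction along the derived sequence reduces freeness of $\locpic(\Int(D), D)$ to the freeness of the local contributions $\locpic(\Int(D_M), D_M)$. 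Each of these local pieces is free because $D_M$ is a DVR and the Picard group of the integer-valued polynomial ring over a DVR is known to be a free abelian group. This machinery is carried out in \cite{PicInt} (and indeed Proposition \ref{prop:preJaff} above is modeled on \S 7 of that paper); I would cite the appropriate result from there to conclude.

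Once all four hypotheses are established, a direct application of Theorem \ref{teor:ext-retract} yields that $\Inv(\Int(D))$ is free, proving the corollary. I expect essentially all of the work to sit in the fourth bullet, while the first three are short verifications from results already in the paper.
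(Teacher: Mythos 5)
Your proposal is correct and follows essentially the same route as the paper: both verify the hypotheses of Theorem \ref{teor:ext-retract} for $D\subseteq\Int(D)$, with the units and $K(X)^\nz/K^\nz$ handled via $K[X]$, and with the freeness of $\locpic(\Int(D),D)$ delegated to the existing literature (the paper cites \cite[Corollary 7.6]{locpic}, which is exactly the result your sketch describes). The only cosmetic difference is that the paper gets freeness of $\Inv(D)$ directly from $D$ being almost Dedekind (\cite[Proposition 5.3]{bounded-almded}) rather than from the scattered hypothesis via Proposition \ref{prop:preJaff}, but both are valid under the stated assumptions.
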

\begin{proof}
The ring $\Int(D)$ is a $D$-algebra contained in $K[X]$; thus $\unit(D)=\unit(R)$ and $K(X)^\nz/K^\nz$ is free. The group $\Inv(D)$ is free since $D$ is almost Dedekind \cite[Proposition 5.3]{bounded-almded}. Under the hypothesis in the statement, $\locpic(\Int(D),D)$ is free \cite[Corollary 7.6]{locpic}. The claim follows from Theorem \ref{teor:ext-retract}.
\end{proof}

\begin{cor}\label{cor:dedekind-int}
Let $D$ be a Dedekind domain. Then, $\Inv(\Int(D))$ is free.
\end{cor}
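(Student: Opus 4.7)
The plan is to reduce the corollary to a direct application of Corollary \ref{cor:almded-int}. For this, I need to verify that a Dedekind domain $D$ satisfies the three hypotheses of that corollary: it is almost Dedekind, $\Int(D)$ behaves well under localization, and $\Max(D)$ is scattered in the inverse topology.

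First I would observe that every Dedekind domain is, by definition, Noetherian and locally a DVR, hence almost Dedekind; so the first assumption is trivial. The second assumption (good behavior of $\Int(D)$ under localization) is a classical fact for Noetherian one-dimensional domains, and in particular for Dedekind domains, so I would simply cite the standard reference \cite{intD} rather than reprove it.

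The substantive point is the third assumption. I would argue that $\Max(D)^\inverse$ is in fact \emph{discrete}, hence scattered with $\deriv(\Max(D)^\inverse)=\emptyset$. The reason is that every maximal ideal $M$ of a Dedekind domain is finitely generated (in fact invertible), so $V(M)$ is a basic open set of $\Spec(D)^\inverse$; moreover, since $\dim D = 1$, we have $V(M)\cap\Max(D)=\{M\}$. Thus each point of $\Max(D)$ is isolated in the inverse topology, giving the required scatteredness.

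With all three hypotheses in place, Corollary \ref{cor:almded-int} applies directly and yields that $\Inv(\Int(D))$ is free. There is no real obstacle here beyond pointing out that the ``behaves well under localization'' condition for Dedekind domains is standard; everything else is immediate from the definitions.
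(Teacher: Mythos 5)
Your proposal is correct and follows the same route as the paper, which likewise proves this corollary by observing that a Dedekind domain satisfies all the hypotheses of Corollary \ref{cor:almded-int}; your verification that $\Max(D)^\inverse$ is discrete (each $V(M)=\{M\}$ is inverse-open since $M$ is finitely generated) and your appeal to the standard fact that $\Int(D)$ behaves well under localization for Dedekind domains are exactly the details the paper leaves implicit.
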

\begin{proof}
The hypothesis of the previous corollary hold, in particular, if $D$ is a Dedekind domain.
\end{proof}

We note that, when $D$ is an almost Dedekind domain, the ring $\Int(D)$ is strongly discrete; thus the two previous corollaries give some more weight to the conjecture advanced at the end of Section \ref{sect:prufer}.

Corollaries \ref{cor:almded-int} and \ref{cor:dedekind-int} use in their proof the fact that the local Picard group is free; this is proved by localizing $\locpic(R,D)$ to the maximal ideals of $R$. We now show that, in some cases, we can use similar results for $\Inv(R)$ with a more direct approach.
\begin{defin}
Let $D$ be a domain with quotient field $K$, and let $R$ be a $D$-algebra containing $D$. We say that an integral ideal $I$ of $R$ is \emph{unitary} if $I\cap D\neq(0)$. We denote by $\Inv_u(R,D)$ the subgroup of $\Inv(D)$ generated by the unitary ideals of $D$.
\end{defin}

\begin{teor}\label{teor:jaffard-unitary}
Let $D$ be an integral domain and let $\Theta$ be a Jaffard family of $D$. Let $R$ be an integral $D$-algebra. Then,
\begin{equation*}
\Inv_u(R,D)\simeq\bigoplus_{T\in\Theta}\Inv_u(RT,T).
\end{equation*}
\end{teor}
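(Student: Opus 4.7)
The plan is to construct the natural extension homomorphism
\begin{equation*}
\Phi\colon \Inv_u(R,D) \longrightarrow \bigoplus_{T\in\Theta}\Inv_u(RT,T),\qquad I\longmapsto (IRT)_{T\in\Theta},
\end{equation*}
and prove it is an isomorphism, extending the pattern of Proposition~\ref{prop:jaffard} to the algebra $R$. Well-definedness on a generator $I\subseteq R$ (a unitary integral invertible ideal) is straightforward: picking any nonzero $d\in I\cap D$, the extension $IRT$ is integral invertible in $RT$ and unitary because $d\in(IRT)\cap T$; and local finiteness of $\Theta$ on $D$ forces $dT=T$, hence $d\in\unit(RT)$ and $IRT=RT$, for all but finitely many $T$. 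Multiplicativity then extends $\Phi$ to a group homomorphism.

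The technical core is the identity $R=\bigcap_{T\in\Theta}RT$ (intersection in the quotient field $L$ of $R$). For $x\in\bigcap_T RT$, consider the $D$-ideal $J:=\{d\in D:dx\in R\}$. Writing $x$ as a finite sum $\sum_i r_it_i$ with $r_i\in R$ and $t_i\in T$, and using the denominator ideal $(D:t_i)\subseteq D$ — which satisfies $(D:t_i)T=T$ since $T$ is a flat overring — one sees that the product $\prod_i(D:t_i)$ is contained in $J$, and therefore $JT=T$ for every $T\in\Theta$. Completeness of the Jaffard family then gives $J=\bigcap_TJT=\bigcap_T T=D$, whence $1\in J$ and $x\in R$. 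Injectivity of $\Phi$ is an immediate consequence: if $IRT=RT$ for every $T$, then both $I$ and $I^{-1}$ lie in $\bigcap_TRT=R$, forcing $I=R$.

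For surjectivity, by multiplicativity it suffices to hit a single-support generator: given $T_0\in\Theta$ and a unitary integral invertible ideal $A\subseteq RT_0$, produce $I\in\Inv_u(R,D)$ with $IRT_0=A$ and $IRT=RT$ for every $T\neq T_0$. My candidate is $I:=A\cap R$; that $I$ is unitary follows from $A\cap D\neq 0$, which is forced by unitariness of $A$ together with the existence of $D$-denominators for elements of $T_0$. The identity $A=I\cdot RT_0$ follows by writing each $b\in A$ as $b=\sum_j u_j(d_jb)$, where $d_j\in(D:b)\subseteq D$ and $u_j\in T_0$ satisfy $\sum_j d_ju_j=1$ (coming from $(D:b)T_0=T_0$); the elements $d_jb$ lie in $A\cap R=I$, exhibiting $b$ as an $RT_0$-combination of elements of $I$. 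The trivial extensions $IRT=RT$ for $T\neq T_0$ then follow from independence of $\Theta$ ($TT_0=K$, hence $RT\cdot RT_0=RK$), which prevents the $T_0$-support of $A$ from surviving into other overrings.

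The main obstacle will be verifying that this candidate $I=A\cap R$ is genuinely \emph{invertible} in $R$: its extensions $IRT_0=A$ and $IRT=RT$ are each invertible in $RT$, but invertibility in $R$ itself (equivalently, local principality at every maximal ideal of $R$) is not automatic, since $R$ need not carry a Jaffard family of its own and its primes are not controlled directly by $\Theta$. Overcoming this requires using the completeness identity $R=\bigcap_T RT$ together with the invertibility of each $IRT$, to transfer local principality back from the collection of extensions to $R$ itself — the same type of delicate transfer argument that underlies \cite[Proposition~7.1]{starloc} in the base case $R=D$, now carried out in the presence of the additional algebra structure and restricted to the unitary subgroup.
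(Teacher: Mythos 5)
Your overall strategy coincides with the paper's: the same map $\Phi$, the same injectivity argument via $R=\bigcap_{T\in\Theta}RT$ (your explicit proof of this identity through the conductor ideal $J=\{d\in D: dx\in R\}$ and completeness is a welcome justification of a step the paper only asserts), and the same candidate $I=A\cap R$ for surjectivity, with $A=IRT_0$ obtained from denominators and $IRT=RT$ for $T\neq T_0$ from independence. (Minor point: both of the latter computations really rest on the identity $(A\cap R)RT=ART\cap RT$, which the paper imports as \cite[Lemma 7.2]{locpic}; your denominator argument supplies the $T=T_0$ case, but the $T\neq T_0$ case as you phrase it still needs that identity, after which $ART=ART_0T=ARK=RK$ does the job.)

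The genuine gap is exactly the step you flag and then leave open: that $I=A\cap R$ is invertible in $R$. Your proposed fix --- ``transfer local principality back from the collection of extensions to $R$'' via completeness --- is not by itself an argument, because the maximal ideals of $R$ are not indexed by $\Theta$ and an ideal whose extensions to all $RT$ are invertible need not be invertible in $R$ without further input. The paper closes this in two concrete moves. First, $I=A\cap R$ is finitely generated, by \cite[Lemma 7.3]{locpic}; this is not addressed in your proposal at all, and without it local principality does not yield invertibility. Second, local principality is checked by a case split on $P:=M\cap D$ for $M\in\Max(R)$: if $P=(0)$ then $R_M\supseteq RK\supseteq RS$ for every $S\in\Theta$, so $IR_M\supseteq IRS=RS$ for any $S\neq T_0$ and $IR_M=R_M$; if $P\neq(0)$ then $R_M\supseteq D_P\supseteq S$ for the unique $S\in\Theta$ with $PS\neq S$, and either $S\neq T_0$ (so again $IR_M=R_M$) or $S=T_0$ (so $R_M$ is a localization of $RT_0$ and $IR_M=AR_M$ is principal because $A$ is invertible in $RT_0$). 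This case analysis on contractions of maximal ideals of $R$ --- using independence to know that each nonzero $P$ survives in exactly one member of $\Theta$ --- is the missing idea; until it (and the finite generation) is supplied, the surjectivity half of your proof is incomplete.
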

\begin{proof}
We first note that, if $I\in\Inv_u(R,D)$, then $IT\in\Inv_u(RT,T)$, since $IT\cap T$ contains any element of $I\cap D$. Thus, we have extension maps $\phi_T:\Inv_u(R,D)\longrightarrow\Inv_u(RT,T)$, $\phi_T(I)=IT$, combining which we have a map
\begin{equation*}
\begin{aligned}
\Phi\colon\Inv_u(R,D)  & \longrightarrow \prod_{T\in\Theta}\Inv_u(RT,T),\\
I & \longmapsto (IT)_{T\in\Theta}.
\end{aligned}
\end{equation*}
We claim that $\Phi$ is injective and that its range is exactly the direct sum.

To show that it is injective, consider the map $\star:I\mapsto\bigcap\{IRT\mid T\in\Theta\}$. Since $\bigcap\{RT\mid T\in\Theta\}=R$, the map $\star$ is a star operation on $R$, and thus $J^\star=J$ for every invertible ideal $J$ of $R$. In particular, if $J\in\ker\Phi$ then
\begin{equation*}
J=J^\star=\bigcap_{T\in\Theta}JRT=\bigcap_{T\in\Theta}RT=R.
\end{equation*}
Thus $\Phi$ is injective.

Let $I\in\Inv_u(R,D)$. Then, $I=JL^{-1}$ for some unitary invertible ideals $J,L$. There are at most finitely many $T\in\Theta$ such that $JT\neq T$, and finitely many $S\in\Theta$ such that $LS\neq S$; hence, $JT=T=LT$ for all but finitely many $T\in\Theta$, and so $IT=T$ for almost all $T$. Hence the range of $\Phi$ is contained in the direct sum.

To show that the range is $\Phi$ is the whole direct sum, it is enough to show that for every $J\in\Inv_u(RT,T)$ there is an $I\in\Inv_u(R,D)$ such that $IRT=J$ and $IRS=RS$ for all $S\in\Theta$, $S\neq T$. Writing $J=J_1(J_2)^{-1}$, we can suppose without loss of generality that $J\subseteq T$. We claim that $I=J\cap R$ is the right choice. Indeed, by \cite[Lemma 7.2]{locpic}, we have $IT=(J\cap R)T=JT\cap RT=J$ and $IS=(J\cap R)S=JS\cap RS=RS$ if $S\neq T$, since $JS=JRTS=JRK=RK$ as $J$ is unitary. We show that $I$ is invertible: by \cite[Lemma 7.3]{locpic}, $I$ is finitely generated. Let $M$ be a maximal ideal of $R$. If $M\cap D=(0)$, then $R_M$ contains $RK$ and thus $RS$ for all $S\in\Theta$; in particular, if $S\neq T$, then $IR_M\supseteq IRS=RS$, and so $IR_M=R_M$ is principal. If $M\cap D=P\neq(0)$, then $R_M$ contains $D_P$: therefore, $R_M$ contains $RS$, where $S\in\Theta$ is such that $PS\neq S$. If $S\neq T$ the claim holds as above. If $S=T$, then $R_M$ is a localization of $RT$, and $IR_M=JR_M$; since $J$ is invertible, it is locally principal, and thus $IR_M$ is principal. Therefore, $I$ is finitely generated and locally principal, and thus it is invertible. Hence the range of $\Phi$ is the direct sum, and the claim follows.
\end{proof}

The previous theorem requires to look at the distance between $\Inv(R)$ and $\Inv_u(R,D)$.
\begin{prop}\label{prop:quoz-unitary}
Let $D$ be an integral domain with quotient field $K$ and let $R$ be an integral $D$-algebra. Then, $\displaystyle{\frac{\Inv(R)}{\Inv_u(R,D)}\simeq\Inv(RK)}$.
\end{prop}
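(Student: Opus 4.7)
The plan is to identify the isomorphism as the one induced by the extension map $\psi\colon\Inv(R)\to\Inv(RK)$, $I\mapsto IRK$. Since $RK=(D\setminus\{0\})^{-1}R$ is a flat localization of $R$, $\psi$ is a well-defined group homomorphism. The proof will split naturally into establishing that $\ker\psi=\Inv_u(R,D)$ and that $\psi$ is surjective, so that the induced map $\Inv(R)/\Inv_u(R,D)\to\Inv(RK)$ is the desired isomorphism.

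For the kernel, the containment $\Inv_u(R,D)\subseteq\ker\psi$ is immediate: any unitary integral invertible ideal $I$ contains a nonzero $d\in D$, which becomes a unit in $RK$, so $IRK\supseteq dRK=RK$. For the reverse, given $I\in\Inv(R)$ with $IRK=RK$, the relation $1\in IRK$ forces $I\subseteq RK$; since $I$ is finitely generated, I can pick $d\in D\setminus\{0\}$ with $dI\subseteq R$, giving an integral invertible ideal whose extension to $RK$ is still $RK$. Clearing denominators in an expression $1=\sum a_ix_i$ with $a_i\in dI$, $x_i\in RK$, produces some $e\in(dI)\cap D\setminus\{0\}$, so $dI$ is unitary. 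Since $dR$ is trivially unitary, the decomposition $I=(dI)\cdot(dR)^{-1}$ exhibits $I$ as an element of $\Inv_u(R,D)$.

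For surjectivity, I would reduce to lifting integral invertibles, since any $J\in\Inv(RK)$ can be written as $AB^{-1}$ with $A,B$ integral invertible ideals of $RK$ (by multiplying by a common denominator in $D$). Given such an integral invertible $A\subseteq RK$, the natural candidate lift is $I:=A\cap R$. The extension equation $IRK=A$ follows from a denominator-clearing argument: for $a\in A$, write $a=r/d$ with $r\in R$, $d\in D\setminus\{0\}$; then $r=da\in A\cap R=I$, so $a\in IRK$. Finite generation of $I$ should follow from a standard lemma on intersections in localizations, in the spirit of \cite[Lemma 7.3]{locpic}.

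The main obstacle will be verifying that $I=A\cap R$ is locally principal at every maximal ideal $M$ of $R$. When $M\cap D=(0)$, the localization $R_M$ contains $RK$, and $IR_M=AR_M$ is principal by invertibility of $A$ in $RK$. The hard case is $M\cap D\neq(0)$, where $R_M$ does not contain $RK$ and the invertibility of $A$ does not directly transfer. Here my plan is to leverage the identity $1=\sum\alpha_k\delta_k$ coming from $AA^{-1}=RK$ (with $\alpha_k\in A$, $\delta_k\in A^{-1}$), clear the $D$-denominators of the products $\alpha_k\delta_j\in RK$ by a common element of $D$, and analyze the resulting integral relation after localizing at $M$ — producing either a unit in $R_M$ that forces $IR_M=R_M$, or an explicit principal generator of $IR_M$. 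Controlling the interplay between $D$-denominators and the unitary maximals of $R$ is the technical heart I expect to require the most care.
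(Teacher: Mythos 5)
Your computation of the kernel is correct and is essentially the paper's argument: the paper also works with the extension map $I\mapsto IRK$ and shows its kernel is $\Inv_u(R,D)$. The only cosmetic difference is how the denominator is produced: the paper observes that the inverse $J=(R:I)$ also extends to $RK$, so $J$ meets $D\setminus\{0\}$ in some $d$ with $dI\subseteq R$, whereas you clear denominators of a finite generating set of $I\subseteq RK$; both routes then conclude, exactly as the paper does, that $dI$ is unitary and $I=(dI)(dR)^{-1}\in\Inv_u(R,D)$.

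The genuine gap is surjectivity. You correctly isolate the hard point --- that $I:=A\cap R$ be locally principal at a maximal ideal $M$ of $R$ with $M\cap D\neq(0)$ --- but you only offer a plan for it, and it is not clear the plan closes: since $RK=(D\setminus\{0\})^{-1}R$ is just a localization of $R$, and the paper's own Example 3.1(d) shows that extension maps $\Inv(R)\to\Inv(S^{-1}R)$ to localizations need not be surjective, any argument must exploit something specific to the multiplicative set $D\setminus\{0\}$ (contrast with Theorem \ref{teor:jaffard-unitary}, where the analogous local-principality argument for $J\cap R$ succeeds only because $R_M$ is there shown to contain some $RS$ with $S$ in a Jaffard family). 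Be aware that the paper's proof of this proposition establishes only the kernel identity and then asserts that the claim follows, without addressing surjectivity at all; so you have flagged a step that the paper leaves implicit rather than one it resolves. Note also that for the application in the proposition that follows (where $\Inv(RK)$ is assumed free), only injectivity of the induced map $\Inv(R)/\Inv_u(R,D)\to\Inv(RK)$ is actually needed, since a subgroup of a free group is free and the resulting sequence splits.
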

\begin{proof}
Consider the extension map $\phi:\Inv(R)\longrightarrow\Inv(RK)$, $\phi(I)=IRK$; we claim that $\ker\phi=\Inv_u(R,D)$.

Indeed, if $I$ is unitary then $IRK=K$ since $I$ contains some $d\in D$, $d\neq 0$; thus every unitary ideal is in $\ker\phi$, and so all of $\Inv_u(D)$ is contained in $\ker\phi$. Conversely, if $I\in\ker\phi$, then $IRK=RK$, and thus $I\cap S\neq(0)$, where $S:=D\setminus\{0\}$. If $J$ is the inverse of $I$, then also $J\in\ker\phi$, and thus $JRK=RK$ and $J\cap S\neq(0)$; however, $J=(R:I)$, and thus there is a $d\in D$, $d\neq(0)$ such that $dI\subseteq R$. In particular, $dI$ is unitary (if $d'\in I\cap D$, $d'\neq 0$, then $dd'\in dI\cap D$), and thus $I=(dR)^{-1}(dI)\in\Inv_u(R,D)$. Hence $\ker\phi=\Inv_u(R,D)$, and the claim follows.
\end{proof}

\begin{prop}
Let $D$ be an integral domain and let $\Theta$ be a Jaffard family of $D$. Let $R$ be an integral $D$-algebra. If $\Inv(RT)$ is free for every $T\in\Theta$ and $\Inv(RK)$ is free, then also $\Inv(R)$ is free.
\end{prop}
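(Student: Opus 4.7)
The plan is to combine Theorem \ref{teor:jaffard-unitary} with Proposition \ref{prop:quoz-unitary} to decompose $\Inv(R)$ into manageable pieces, each of which is free by the hypotheses (possibly after noting that subgroups and quotients behave well).

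First, I would apply Proposition \ref{prop:quoz-unitary} to each $T \in \Theta$: since $T$ is an overring of $D$, its quotient field is still $K$, and $RT$ is an integral $T$-algebra. The proposition yields a short exact sequence
\begin{equation*}
0 \longrightarrow \Inv_u(RT, T) \longrightarrow \Inv(RT) \longrightarrow \Inv(RK) \longrightarrow 0.
\end{equation*}
Since $\Inv(RT)$ is free by hypothesis and subgroups of free abelian groups are free, $\Inv_u(RT, T)$ is free for every $T \in \Theta$.

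Next, I would invoke Theorem \ref{teor:jaffard-unitary} to conclude
\begin{equation*}
\Inv_u(R, D) \simeq \bigoplus_{T \in \Theta} \Inv_u(RT, T),
\end{equation*}
which is a direct sum of free groups and hence free.

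Finally, applying Proposition \ref{prop:quoz-unitary} to the original extension $D \subseteq R$ yields the short exact sequence
\begin{equation*}
0 \longrightarrow \Inv_u(R, D) \longrightarrow \Inv(R) \longrightarrow \Inv(RK) \longrightarrow 0.
\end{equation*}
Because $\Inv(RK)$ is free by hypothesis, this sequence splits, so $\Inv(R) \simeq \Inv_u(R, D) \oplus \Inv(RK)$ is free, completing the argument. Since each step is a direct invocation of an already-established result, there is no real obstacle here; the only thing to double-check is that $T$ genuinely has quotient field $K$ (so that the term $\Inv(RK)$ appearing in the first application matches the one in the hypothesis), which is immediate since every $T \in \Theta$ is an overring of $D$.
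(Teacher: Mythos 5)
Your proposal is correct and follows essentially the same route as the paper: split $\Inv(R)$ via Proposition \ref{prop:quoz-unitary} using the freeness of $\Inv(RK)$, then decompose $\Inv_u(R,D)$ via Theorem \ref{teor:jaffard-unitary} into the $\Inv_u(RT,T)$, each free as a subgroup of the free group $\Inv(RT)$. The only superfluous step is your first application of Proposition \ref{prop:quoz-unitary} to each $T$: that $\Inv_u(RT,T)$ is a subgroup of $\Inv(RT)$ is immediate from its definition, so no exact sequence is needed there.
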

\begin{proof}
By Proposition \ref{prop:quoz-unitary}, $\frac{\Inv(R)}{\Inv_u(R,D)}\simeq\Inv(RK)$; in particular, if $\Inv(RK)$ is free then $\Inv(R)\simeq\Inv_u(R,D)\oplus\Inv(R,K)$, and thus we only need to show that $\Inv_u(R,D)$ is free. By Theorem \ref{teor:jaffard-unitary}, $\Inv_u(R,D)\simeq\bigoplus\{\Inv_u(RT,T)\mid T\in\Theta\}$, and each $\Inv_u(RT,T)$ is free since it is a subgroup of the free group $\Inv(RT)$. Hence $\Inv_u(R)$ is free and so is $\Inv(R)$.
\end{proof}

Note that these results allow to partially circumvent local finiteness: indeed, $\Theta$ is a locally finite family, but in general $R\cdot\Theta:=\{RT\mid T\in\Theta\}$ is not locally finite. For example, if $D=\insZ$, $\Theta:=\{D_M\mid M\in\Max(D)\}$ and $R=\Int(\insZ)$, then $R\cdot\Theta$ is not locally finite (indeed, all nonconstant polynomials survive in each $\Int(\insZ)\insZ_{(p)}=\Int(\insZ_{(p)})$).

We conclude the paper with an example that is very far from the other cases we considered.
\begin{ex}
Let $E$ be the ring of all entire functions, i.e., the ring of all functions $\insC\longrightarrow\insC$ that are analytical everywhere. The ring $E$ is an infinite-dimensional B\'ezout domain such that every nonzero prime is contained in only one maximal ideal; we refer to \cite{henriksen_prime} and \cite[Section 8.1]{fontana_libro} for generalities on this ring. We claim that $\Inv(E)$ is not free.

Indeed, let $f\in E$ be a function with infinitely many zeros. Then, $f$ induces a Jaffard family $\{E_1,E_2\}$, where $E_1:=\bigcap\{E_M\in\Max(E)\mid f\in M\}$ and $E_2:=\bigcap\{E_M\in\Max(E)\mid f\notin M\}=E[1/f]$, and so $\Inv(E)\simeq\Inv(E_1)\oplus\Inv(E_2)$. We claim that $\Inv(E_1)$ is not free.

Note first that, since $E$ is a B\'ezout domain, so is $E_1$, and thus $\Inv(E_1)=\insprinc(E_1)$. For every $g\in Q(E)$ and every $x\in\insC$, let $v_x(g)$ be the order of the zero of $g$ in $x$ (with $v_x(g)$ negative if $x$ is a pole), or equivalently let $v_x(g)$ be the order of $g$ in the ring $E_{(X-x)}$ (which is a discrete valuation ring). Let $Z(g)$ be the set of zeros of a $g\in E$, and set $Z:=Z(f)$. Consider the map
\begin{equation*}
\begin{aligned}
\psi\colon \Inv(E_1) & \longrightarrow \prod_{x\in Z}\insZ,\\
(g) & \longmapsto (v_x(g))_{x\in Z}.
\end{aligned}
\end{equation*}
It's clear that $\psi$ is a group homomorphism; we claim that it is actually an isomorphism.

To show that it is surjective, it is enough to note that, since $Z$ is discrete (being the zero set of $f$), by Weierstrass' theorem (see e.g. \cite[Chapter 5, Theorem 7]{ahlfors}) for every sequence $(e_x)_{x\in Z}$ of integers there is an entire function $h$ such that $v_x(h)=e_x$; then, $\psi(h)=(e_x)_{x\in Z}$.

Suppose now that $h\in\ker\psi$. Then, $h$ has no zeros nor poles in $Z$; write $h=h_1/h_2$, where $h_1,h_2\in E$ have no common zero. Let $M$ be a maximal ideal containing $f$; then, the set $Z(M):=\{Z(g)\mid g\in E\}$ is a filter (actually, an ultrafilter). Since $Z(f)\cap Z(h_2)=\emptyset$, we have $h_2\notin M$; thus, $h\in E_M$; since $M$ was arbitrary, $h\in E_1$. Likewise, $Z(f)\cap Z(h_1)=\emptyset$, and thus $h^{-1}=h_2/h_1\in E_1$. Hence $h$ is a unit of $E_1$, and thus $hE_1=E_1$. Therefore, $\psi$ is injective.

It follows that $\Inv(E_1)$ is isomorphic to the direct product of an infinite family of copies of $\insZ$. The latter is not free \cite[Theorem 8.2]{fuchs-abeliangroups}, and thus neither $\Inv(E)$ is free.

We note that $E$ is not strongly divisorial (see Examples 4.16-4.19 of \cite{fontana-towards}).
\end{ex}

\bibliographystyle{plain}
\bibliography{/bib/articoli,/bib/miei,/bib/libri}
\end{document}